\newcommand{\real}{\mathbb{R}}
\newcommand{\p}[1]{\left(#1\right)}
\newcommand{\curlyp}[1]{\left\{ #1 \right\}}
\newcommand{\abs}[1]{\left|#1\right|}
\newcommand{\norm}[1]{\left\|#1\right\|}
\newcommand{\commentout}[1]{}
\newcommand{\der}[2]{ \frac{\partial #1}{\partial #2} }
\newcommand{\prob}{\mathcal{P}}
\newcommand{\D}{\mathcal{D}}
\newcommand{\C}{\mathcal{C}}
\newcommand{\M}{\mathcal{M}}
\newcommand{\PP}{\mathcal P}
\newcommand{\R}{\mathbb{R}}
\newcommand{\N}{\mathbb{N}}
\newcommand{\dist}{\text{\rm dist}}
\newtheorem{lemma}{Lemma}
\newtheorem{definition}{Definition}
\newtheorem{proposition}{Proposition}
\newtheorem{theorem}{Theorem}
\newtheorem{remark}{Remark}
\begin{document}
\title{Nonlocal interactions by repulsive-attractive potentials: radial ins/stability}

\author{D. Balagu\'e $^1$, J. A. Carrillo$^2$,  T. Laurent$^3$ and G. Raoul$^4$ }

\address{$^1$ Departament de
Matem\`a\-ti\-ques, Universitat Aut\`onoma de Barcelona, E-08193
Bellaterra, Spain. E-mail: {\tt dbalague@mat.uab.cat}.}

\address{$^2$ ICREA and Departament de
Matem\`a\-ti\-ques, Universitat Aut\`onoma de Barcelona, E-08193
Bellaterra, Spain. E-mail: {\tt carrillo@mat.uab.es}. {\it On
leave from:} Department of Mathematics, Imperial College London,
London SW7 2AZ, UK.}

\address{$^3$ Department of Mathematics, University of California -
Riverside, Riverside, CA 92521,  USA. E-mail: {\tt
laurent@math.ucr.edu}.}

\address{$^4$DAMTP, University of Cambridge, Wilberforce road, CB3 0WA, United Kingdom. E-mail: {\tt
g.raoul@damtp.cam.ac.uk}.}

\maketitle

\begin{abstract}
We investigate nonlocal interaction equations with
repulsive-attractive radial potentials. Such equations describe
the evolution of a continuum density of particles in which they
repulse each other in the short range and attract each other in
the long range. We prove that under some conditions on the
potential, radially symmetric solutions converge exponentially
fast in some transport distance toward a spherical shell
stationary state. Otherwise we prove that it is not possible for a
radially symmetric solution to converge weakly toward the
spherical shell stationary state. We also investigate under which
condition it is possible for a non-radially symmetric solution to
converge toward a singular stationary state supported on a general
hypersurface. Finally we provide a detailed analysis of the
specific case of the repulsive-attractive power law potential as
well as numerical results.
\end{abstract}

\section{Introduction}
\label{sec:1}

Nonlocal interaction equations are continuum models for large
systems of particles where every single particle can interact not
only with its immediate neighbors but also with particles far
away. These equations have a wide range of applications. In
biology they are used to model the collective behavior of a large
number of individuals, such as a swarm of insects, a flock of
birds, a school of fish or a colony of bacteria  \cite{ME99,TB,
TBL, CuckerSmale1, CuckerSmale2,Tadmor2008, Dorsogna3,CFRT,CCR,
BT1,BT2,Bjorn,Alethea,DP,BDP,BCM,BCC}. In these models individuals
sense each other at a distance, either directly by sound, sight or
smell, or indirectly via chemicals, vibrations, or other signals.
Nonlocal interaction equations also arise in various contexts in
physics. They are used in models describing the evolution of
vortex densities in superconductors
\cite{Weinan:1994,SandierSerfaty, SandierSerfatybook,LinZhang,
AS,AMS,Mainini,DZ, Masmoudi}. They also appear in the modeling of
dynamics of agglomerating particles in two dimensions  (with loose
links to the one-dimensional sticky particles system)
\cite{Poupaud1}.  They also appear in simplified inelastic
interaction models for granular media
\cite{BenedettoCagliotiPulvirenti97,CMRV2,tosc:gran:00,LT}. Going
back to  biology,  nonlocal interaction equations arise also in
the modeling of the  orientational distribution of F-actin
filaments in  cells \cite{F-actin, Stevens1,Stevens2}.

In their simplest form, nonlocal interaction equations can be
written as
\begin{gather}
\der{\mu}{t} + \text{div}(\mu v) =0  \quad , \quad  v= - \nabla W*
\mu   \label{pdes1}
\end{gather}
where $\mu(t,x)=\mu_t(x)$ is the probability or mass density of
particles at time $t$ and at location $x \in \real^N$, $W: \real^N
\to \real$ is the interaction potential and $v(t,x)$ is the
velocity of the particles. We will always assume that the
interaction potential $W(x)=k(|x|)$ is radial and $C^2$- or
$C^3$-smooth away from the origin, depending on the results.
Typically the potentials we will consider have a singularity at
the origin.

When the potential $W$ is purely attractive, i.e. $W$ is a
radially symmetric increasing  function, then the density of
particles collapse on itself and converge to a Dirac Delta
function located at the center  of mass of the density. This Dirac
Delta function is the unique stable steady state and it is a
global attractor \cite{CDFLS}. The collapse toward the Dirac Delta
function can take place in finite time if the interaction
potential is singular enough at the origin and several works have
been recently devoted to the understanding of these singular
measure solutions \cite{BL,BCL,CDFLS,BGL}.

In biological applications however, it is often the case that
individuals attract each other in the long range in order to
remain in a cohesive group, but repulse each other in the short
range in order to avoid collision \cite{Mogilner2003,biobook}.
This lead to the choice of a radially symmetric potential $W$
which is first decreasing then increasing as a function of the
radius. We refer to these type of potentials as
repulsive-attractive potentials. Compared with the purely
attracting case where solutions always converge to a single Delta
function, nonlocal interaction equations with repulsive-attractive
potentials lead to solutions converging to possibly complex steady
states.  As such,  nonlocal interaction equations with
repulsive-attractive potentials can be considered  as a minimal
model for pattern formation in large groups of individuals.

Whereas nonlocal interaction equations with purely attractive
potential have been intensively studied there are still relatively
few rigorous results about nonlocal interaction equations with
repulsive-attractive potential. The 1D case has been studied in a
series of works \cite{FellnerRaoul1,FellnerRaoul2,Raoul}. The
authors have shown that the behavior of the solution depends
highly on the regularity of the interaction potential: for regular
interaction, the solution converges to a sum of Dirac masses,
whereas for singular repulsive potential, the solution remains
uniformly bounded. They also showed that combining a singular
repulsive with a smooth attractive potential leads to integrable
stationary states. Pattern formation in multi-dimensions have
recently been studied in \cite{KSUB,BUKB}. In these two works, the
authors perform a numerical study of the finite particle version
of \eqref{pdes1} and show that a repulsive-attractive potential
can lead to the emergence of surprisingly complex patterns. To
study these patterns they plug in \eqref{pdes1} an ansatz  which
is a distribution supported on a surface. This give rise to an
evolution equation for the surface. They then perform a linear
stability analysis around the uniform distribution on the sphere
and derive simple conditions on the potential which classify the
different instabilities. The various instability modes dictate
toward which pattern the solution will converge. They also check
numerically that what is true for the surface evolution equation
also holds for the continuum model \eqref{pdes1}. In another
recent work \cite{FHK} the specific case where the repulsive part
of the potential is the Newtonian potential and the attractive
part is polynomial is analyzed showing the existence of radially
compactly supported integrable stationary states. They also study
their nonlinear stability for particular cases.

In this paper we focus primarily on proving rigorous results about
the convergence of radially symmetric solutions toward spherical
shell stationary states in multi-dimensions.
\begin{definition}[Spherical Shell]\label{delta}
The  spherical shell of radius $R$, denoted $\delta_R$, is the
probability measure which is uniformly distributed  on the sphere
$\partial B(0,R) =\{x\in \real^N: \abs{x}=R\}$.
\end{definition}
Given a repulsive-attractive radial potential whose attractive
force does not decay too fast at infinity,  there always exists an
$R>0$ so that the spherical shell of radius $R$ is a stationary
state as it will be remarked below. One need then to address the
question of wether or not this spherical shell is stable. It is
classical, see \cite{AGS,CMRV2,MR1964483,CMRV1}, that the equation
\eqref{pdes1} is a gradient flow of the interaction energy
$$
 E[\mu]= \frac{1}{2} \iint_{\real^N \times \real^N} W(x-y) d \mu(x) d \mu(y)
$$
with respect to the euclidean Wasserstein distance. Thus, stable
steady states of \eqref{pdes1} are expected to be local minimizers
of the interaction energy. Simple energetic arguments will show
that in order for the spherical shell of radius $R$ to be a local
minimum of the interaction energy,  it is necessary that the
potential W satisfies:
\begin{enumerate}

\item[{\bf (C0)}] Repulsive-Attractive Balance: $\omega(R,R)=0$,

\item[{\bf (C1)}] Fattening Stability: $\partial_1 \omega(R,R)\le
0$,

\item[{\bf (C2)}] Shifting Stability: $\partial_1
\omega(R,R)+\partial_2 \omega(R,R)\le 0$,
\end{enumerate}
where the function $\omega:\real^2_+\longrightarrow \real$ is
defined by
\begin{equation} \label{omega-def2}
\omega(r,\eta)= -\frac{1}{ \sigma_{N}} \int_{\partial B(0,1)}
\nabla W (r e_1 - \eta y) \cdot e_1 \,d \sigma(y),
\end{equation}
$\sigma_N$ is the area of the unit ball in $\real^N$, $e_1$ is the
first vector of the canonical basis of $\real^N$, $d\sigma$
denotes the volume element of the manifold where the integral is
performed and $\real^2_+=(0,+\infty)\times(0,+\infty)$. Condition
{\bf (C0)} simply guarantees that the spherical shell $\delta_R$
is a critical point of the interaction energy. We will see that if
condition {\bf (C1)} is not satisfied then it is energetically
favorable to split the spherical shell into two spherical shells.
Heuristically this indicate that the density of particles, rather
than remaining on the sphere, is going to expand and occupy a
domain in $\real^N$ of positive Lebesgue measure. If condition
{\bf (C1)} is not satisfied we will therefore say that the
``fattening instability" holds. It can be easily checked that if
$\omega(R,R)=0$, then $\partial_1 \omega(R,R)$ is simply the value
of the divergence of the velocity field on the sphere of radius
$R$. So the fattening instability corresponds to an expanding
velocity field on the support of the steady state. We will also
see that if condition {\bf (C2)} is not satisfied it is
energetically favorable to increase or decrease the radius of the
spherical shell. This instability will be referred as the ``shift
instability''.

We now outline the structure of the paper and describe the main
results. In the preliminary section, section \ref{sec:2}, we
derive {\bf (C0)}--{\bf (C2)} from an energetic point of view and
we show that they correspond to avoiding the fattening and shift
instability. We also study the regularity of the kernel $\omega$
defined by \eqref{omega-def2}. A good understanding of the
regularity of $\omega$ will be necessary for later sections. We
also remind the reader of previous results from
\cite{BLR,BalagueCarrillo} about well posedness of \eqref{pdes1}
in $L^p(\real^N)$. Section \ref{sec:3} is devoted to a detailed
study of the fattening instability, both in the radially symmetric
case and in the non-radially symmetric case. We first show that if
condition {\bf (C1)} is not satisfied then it is not possible for
a radially symmetric $L^p$-solution to converge weakly-$*$ as
measures toward a spherical shell stationary state. We then
investigate singular stationary states supported on hypersurfaces
which are not necessarily  spheres. Such steady states have been
observed in numerical simulations \cite{KSUB,BUKB}. We show that
if the divergence of the velocity field generated by such
stationary state is positive everywhere on their support, then it
is not possible for an $L^p$-solution to converge toward the
stationary state in the sense of the topology defined by
$d_\infty$. Here $d_\infty$ stands for the infinity-Wasserstein
distance on the space of probability measures (see section
\ref{sec:3} for a definition). We also show that  if the
repulsive-attractive potential  $W$ is  singular  enough at the
origin, for example $W(x)\sim -|x|^b/b$ as $|x| \to 0$ with $b \le
3-N$, then the potential is so repulsive in the short range that
solutions can not concentrate on an hypersurface,  and this is
independent of  how attractive is the potential in the long range.
To be more precise we show that for potentials with such a strong
repulsive singularity at the origin, $L^p$ solutions can not
converge with respect to the $d_\infty$-topology  toward singular
steady states supported on hypersurfaces.

   Whereas section \ref{sec:3} is devoted to instability
results, section \ref{sec:4} is devoted to stability results. We
show that if {\bf (C0)}--{\bf (C2)} hold with strict inequalities,
then a radially symmetric solution of \eqref{pdes1} which starts
close enough  to the spherical shell  in the $d_\infty$ topology
will converge exponentially fast toward it. Under additional
assumptions on the potential we can also prove convergence with
respect to the $d_\alpha$ topology, $\alpha \in [1,+\infty)$. In
order for the stability results of section \ref{sec:4} to hold a
certain amount of regularity on the solutions is necessary.
Unfortunately weak $L^p$-solutions do not have this amount of
regularity. This is why in  section \ref{sec:5} we prove well
posedness of classical $C^1$-solutions. This covers a gap in the
existing literature which mostly considers weak solutions. The
results of  section \ref{sec:4} are true for this class of
classical $C^1$-solutions. The aim of section \ref{sec:6} is to
show  examples of how to apply the general instability and
stability theory in the case of power-law repulsive-attractive
potentials:
\begin{equation} \label{poweri}
W(x)=\frac{\abs{x}^a }{a }-\frac{\abs{x}^b }{b } \qquad 2-N<b <a.
\end{equation}
For this family of potentials, conditions {\bf (C0)}--{\bf (C2)}
can be explicitly formulated in terms of $a$ and $b$, therefore
leading to an explicit bifurcation diagram for the stability  of
the spherical shell in $\real^N$. Finally in the last section,
section \ref{sec:7}, we perform numerical computations of radially
symmetric solutions of \eqref{pdes1} with power-law potential
\eqref{poweri} and study their convergence toward spherical shell
stationary state. Since a spherical shell is a highly singular
function, it is challenging to perform such computations with
traditional methods. This is why, following
\cite{GosseToscani,BCC,MR2566595,FellnerRaoul1,FellnerRaoul2},
rather than simulating \eqref{pdes1} directly, we simulate the
evolution of the inverse of the cumulative distribution of the
radial measure associated to $\mu$. Since the inverse of the
cumulative distribution of a spherical shell is a constant
function, this approach has the virtue of smoothing the dynamics
and this provides us with a robust numerical scheme. Our numerical
simulations indicate the possible existence of integrable radial
stationary states stable under radial perturbations in the
parameter area corresponding to the fattening instability for
power-law repulsive-attractive potentials, an issue that will be
analysed elsewhere. This has already been proved in the particular
case of $b=2-N$ and $a\geq 2$ in \cite{FHK}.

%%%%%%%%%%%%%%%%%%%%%%%%%%%%%%%%%%%%%%%%%%%%%%%%%%%%%%%%%%%%%%%%%%%%%%%%%%%%%%%%%%%%%%%%%%%

\section{Preliminary section}
\label{sec:2}
\subsection{Radially symmetric formulation of the equation}

\begin{definition}[Radial Measures]\label{definition:radial}
We denote by $\prob^{r}(\real^N)$  the space of radially symmetric
probability measures. If  $\mu \in \prob^{r}(\real^N)$ then
$\hat{\mu} \in \prob([0,+\infty))$ is defined by
$$
 \int_{r_1}^{r_2} d\hat{\mu}(r)  = \int_{r_1< \abs{x}<r_2} d\mu(x)
 \qquad \mbox{and} \qquad
 \int_{0}^{r_2} d\hat{\mu}(r)  = \int_{0\leq \abs{x}<r_2} d\mu(x)
$$
for all $0<r_1<r_2$. We endow this space with the standard
weak-$*$ topology.
\end{definition}
Recall that $\delta_R \in \prob^r(\real^N)$ stands for the
spherical shell of radius $R$ (see Definition \ref{delta}). The
velocity field at point $x$ generated by a spherical shell of
radius $R$ is given by $v_R(x)=- \nabla W * \delta_R(x)$. Since
$W$ is radially symmetric, then by symmetry there exists a
function $\omega(r,\eta)$ such that
\begin{equation} \label{omega-def}
v_R(x)=- \nabla W * \delta_R(x)= \omega( \abs{x},R)
\frac{x}{\abs{x}}
\end{equation}
and one can easily check that this function $\omega$ is defined by
\eqref{omega-def2}, see \cite{BCL} for more details. Note also
that if $\mu \in \prob^{r}(\real^N)$ then it can be written as a
sum of spherical shells, $\mu=\int_0^{+\infty} \delta_{\eta} \;
d\hat{\mu}(\eta)$, and we conclude that
$$
-(\nabla W * \mu)(x)= -\int_0^{+\infty} (\nabla W  *
\delta_{\eta})(x) d\hat{\mu}(\eta)=  \int_0^{+\infty}
\omega(\abs{x},\eta) d\hat{\mu}(\eta) \, \frac{x}{\abs{x}}.
$$
Given $T>0$, $C([0,T];\prob^{r}(\real^N))$ denotes the set of
continuous curves of radial measures where continuity is with
respect to the weak-$*$ convergence. We say that $\mu \in
C([0,T];\prob^{r}(\real^N))$ is a radially symmetric solution of
\eqref{pdes1} if $\hat{\mu} \in C([0,T];\prob([0,+\infty))$
satisfies the one dimensional conservation law:
\begin{align}
&\partial_t \hat{\mu} + \partial_r (\hat{\mu} \hat{v})=0 \label{pde1r}\\
&\hat{v}(t,r)= \int_0^{+\infty} \omega(r,\eta) d\hat{\mu}_t(\eta)
\label{pde2r} \,,
\end{align}
in the distributional sense. We will now give conditions for the
velocity field to be well-defined by studying the properties of
the function $\omega$.

\subsection{Regularity of the function $\omega(r,\eta)$}

Let us remind that we assume that $W$ is radially symmetric and
belongs to $C^2(\real^N \backslash \{0\})$. The function
$\omega(r,\eta)$ defined by \eqref{omega-def2} is clearly $C^1$
away from the diagonal ${\mathcal D}=\{(r,r): r>0 \}$. Moreover,
the derivatives of $\omega$ are given by
\begin{equation} \label{omega-derr}
\partial_1\omega(r,\eta)= -\frac{1}{ \sigma_{N}}
\int_{\partial B(0,1)} \frac{\partial^2 W}{\partial x_1^2} (r e_1
- \eta y) \,d \sigma(y),
\end{equation}
and
\begin{equation} \label{omega-dereta}
\partial_2\omega(r,\eta)= \frac{1}{
\sigma_{N}} \int_{\partial B(0,1)} \nabla \left(\frac{\partial
W}{\partial x_1}\right) (r e_1 - \eta y) \cdot y \,d \sigma(y),
\end{equation}
away from the diagonal. We need to investigate the behavior of
$\omega$ on the diagonal.  Let us make the following definition:
\begin{definition}[Integrability on hypersurfaces]
A  radially symmetric function $g \in \C( \real^N \backslash
\{0\})$ is said to be locally integrable on hypersurfaces if
\begin{equation*}
\int_{[0,1]^{N-1}} \abs{g(\hat{x},0)} \; d\hat{x} < + \infty
\end{equation*}
where $\hat x =(x_1, \dots, x_{N-1})$, or equivalently, if
$\hat{g}(r)r^{N-2}$ is integrable on $(0,1)$ with
$g(x)=\hat{g}(|x|)$. By an abuse of notation, we sometimes say $\hat{g}(r)$ is integrable on hypersurfaces.
\end{definition}

\begin{lemma}[Regularity of the function $\omega$]\label{regsphere}
Let $W(x)=k(|x|)$ be  a radially symmetric potential  belonging to
$C^3(\real^N \backslash \{0\})$.
\begin{enumerate}
\item[(i)] If $k'(r)$ is  locally integrable on
hypersurfaces then $\omega \in C(\real^2_+)$.

\item[(ii)] If $k'(r)$, $k''(r)$, and $r^{-1}k'(r)$ are locally
integrable on hypersurfaces then $\omega \in C^1(\real^2_+)$.

\item[(iii)] Suppose $\Delta W$ is negative in a neighborhood of
the origin.  If $k'(r)$ is  locally integrable on hypersurfaces
but $\Delta W = k''+(N-1)r^{-1} k'$ is not, then for any $R>0$,
\begin{equation} \label{meme}
\lim_{\substack{(r,\eta) \notin \D\\{(r,\eta) \to
(R,R)}}}\partial_1 \omega (r,\eta)=+\infty.
\end{equation}

\end{enumerate}
\end{lemma}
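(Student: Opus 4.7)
The unifying idea is to use the polar parametrization $y=(\cos\theta,\sin\theta\,\phi)$ with $\phi\in S^{N-2}$, which (by rotational symmetry about the $e_1$-axis) reduces every surface integral under consideration to $\sigma_{N-1}$ times a one-dimensional integral over $\theta\in[0,\pi]$. The change of variables to the distance $\rho=|re_1-\eta y|=\sqrt{r^2+\eta^2-2r\eta\cos\theta}$, valid on $(0,\pi)$, gives
\[
\sin^{N-2}\theta \, d\theta = \frac{\bigl[(\rho^2-(r-\eta)^2)((r+\eta)^2-\rho^2)\bigr]^{(N-3)/2}}{2^{N-3}(r\eta)^{N-2}}\, \rho\, d\rho,
\]
with $\rho$ ranging over $[|r-\eta|,r+\eta]$.

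For part (i), the bound $|r-\eta\cos\theta|\leq\rho$ makes the integrand of $\omega$ dominated in modulus by $|k'(\rho)|$; converting via the formula above, one gets a dominating function that near $\rho=0$ is controlled by a multiple of $|k'(\rho)|\rho^{N-2}$, which is integrable by hypothesis, uniformly for $(r,\eta)$ close to a given $(R_0,R_0)$. The dominated convergence theorem then delivers continuity. Part (ii) goes the same way, starting from \eqref{omega-derr} and \eqref{omega-dereta}: the Hessian entries of $W$ are bounded pointwise by $|k''(\rho)|+|k'(\rho)|/\rho$, and the additional integrability assumptions on $k''$ and $r^{-1}k'$ furnish the dominating function needed for continuity of $\partial_1\omega$ and $\partial_2\omega$.

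For part (iii), the key reduction is the divergence identity
\begin{equation*}
\partial_1\omega(r,\eta)+\frac{N-1}{r}\omega(r,\eta)= -(\Delta W * \delta_\eta)(re_1),\qquad r\neq\eta,
\end{equation*}
obtained by writing $v_\eta(x):=-(\nabla W * \delta_\eta)(x)=\omega(|x|,\eta)\,x/|x|$ (smooth on $\{|x|\neq\eta\}$ since $W\in C^3$ away from $0$) and computing its divergence at $x=re_1$ two ways: in radial coordinates on the left, and by commuting the derivative with the convolution on the right. By part (i), $\omega$ is continuous at $(R,R)$, so $(N-1)\omega(r,\eta)/r$ stays bounded as $(r,\eta)\to(R,R)$; it therefore suffices to show $-(\Delta W * \delta_\eta)(re_1)\to+\infty$. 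Applying the change of variables to this convolution yields
\[
-(\Delta W * \delta_\eta)(re_1)= \frac{\sigma_{N-1}}{\sigma_N\, 2^{N-3}(r\eta)^{N-2}} \int_{|r-\eta|}^{r+\eta}(-\Delta W(\rho))\bigl[(\rho^2-(r-\eta)^2)((r+\eta)^2-\rho^2)\bigr]^{(N-3)/2}\rho\, d\rho.
\]
Since $-\Delta W\geq 0$ on some interval $(0,\varepsilon_0)$, the integrand is nonnegative there, and the Jacobian factor converges, for each fixed $\rho\in(0,2R)$, to $\rho^{N-3}(4R^2-\rho^2)^{(N-3)/2}$ as $(r,\eta)\to(R,R)$. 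Extending by zero below $|r-\eta|$ and applying Fatou's lemma on $(0,\varepsilon_0)$, together with the boundedness of the tail integral on $[\varepsilon_0,r+\eta]$, gives
\[
\liminf_{(r,\eta)\to(R,R)} -(\Delta W * \delta_\eta)(re_1) \geq C\int_0^{\varepsilon_0}(-\Delta W(\rho))\,\rho^{N-2}\, d\rho = +\infty,
\]
the divergence being exactly the assumption that $\Delta W$ is not locally integrable on hypersurfaces.

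The main obstacle is the bookkeeping in the low-dimensional cases $N=2,3$, where the exponent $(N-3)/2$ is negative or zero: one must verify that the Jacobian is still nonnegative on the range of integration, that pointwise convergence as $(r,\eta)\to(R,R)$ survives the moving endpoint singularity at $\rho=|r-\eta|$, and that the resulting limit integrand really corresponds to the hypersurface-integrability obstruction rather than some strictly stronger integrability condition.
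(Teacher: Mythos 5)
Your treatment of part (iii) is essentially the paper's argument in different clothing: you use the same divergence identity \eqref{divdelta} to reduce the claim to the blow-up of $-(\Delta W * \delta_\eta)(re_1)$, you use part (i) to keep $(N-1)\omega(r,\eta)/r$ bounded, and you split off the nonnegative singular part of $-\Delta W$ near the origin from a bounded tail. The paper does this with a cut-off $\chi_\varepsilon$ and Lemma \ref{notreg} (monotone convergence plus the sphere--sphere intersection bound of Lemma \ref{propmanifolds}); you do it with the explicit change of variables to the distance $\rho$ and Fatou. That step is sound in every dimension $N\ge 2$: on $[|r-\eta|,r+\eta]$ the Jacobian is nonnegative, for each fixed $\rho\in(0,\varepsilon_0)$ one eventually has $|r-\eta|<\rho<r+\eta$ so the integrand converges pointwise, and $(4R^2-\rho^2)^{(N-3)/2}$ is bounded below on $(0,\varepsilon_0)$ once $\varepsilon_0<2R$; so the ``low-dimensional obstacle'' you flag is not actually located in (iii).

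The genuine gap is in parts (i)--(ii), precisely where your route diverges from the paper's. Your dominating function $C\,|k'(\rho)|\rho^{N-2}$, uniform in $(r,\eta)$ near $(R_0,R_0)$, is only obtained from the bound $(\rho^2-(r-\eta)^2)_+^{(N-3)/2}\le \rho^{N-3}$, which requires $N\ge 3$. For $N=2$ the Jacobian carries the factor $(\rho^2-(r-\eta)^2)^{-1/2}$, which blows up at the moving endpoint $\rho=|r-\eta|$ (and similarly $((r+\eta)^2-\rho^2)^{-1/2}$ at the outer endpoint), so there is no fixed integrable majorant independent of $(r,\eta)$ and the dominated convergence theorem cannot be applied as stated; the same objection hits the $C^1$ claim in (ii) through \eqref{omega-derr}--\eqref{omega-dereta}. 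This is exactly the point the paper's proof is built to avoid: it never dominates pointwise in the distance variable, but instead (Lemmas \ref{propmanifolds} and \ref{reg2}) splits off an $\varepsilon$-neighbourhood of the singularity, shows that its contribution is small \emph{uniformly} in $(x,\eta)$ using the intersection-measure estimate $|\partial B(0,\eta)\cap\partial B(x,s)|_{\mathcal H^{N-2}}\le C\,(s^2-\dist^2)_+^{(N-2)/2}\le C s^{N-2}$ --- note the exponent $(N-2)/2\ge 0$, with no endpoint singularity --- and then gets continuity of the regular remainder from a Lipschitz bound combined with $d_\infty(\delta_\eta,\delta_{\tilde\eta})\to 0$. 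To close your argument you would need either to import that uniform-smallness decomposition, or to rework the $N=2$ domination by hand (e.g.\ treating the endpoint region $|r-\eta|\le\rho\le 2|r-\eta|$ separately), neither of which is done in the proposal. A smaller omission in (ii): having shown that the off-diagonal formulas for $\partial_1\omega,\partial_2\omega$ extend continuously across $\D$, you should still say why this yields $\omega\in C^1(\real^2_+)$ (continuity of $\omega$ plus continuous extension of its partials off the diagonal), a point the paper also passes over lightly.
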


Before proving the above lemma, let us discuss  the result.
Obviously the regularity of the function $\omega$ depends only on
the behavior of $W$ at the origin. Assume for simplicity that in
the neighborhood of the origin, the potential $W$ is a powerlaw,
that is $W(x)=k(|x|)=-|x|^b/b$ for all $x \in B(0,\varepsilon)$,
where $b$ is possibly negative. Note that  $k'(r)<0$ for $r<
\varepsilon$ so the potential is repulsive in the short range.
Lemma \ref{regsphere} then claims that $\omega$ is continuous if
$b>2-N$ and continuously differentiable if $b>3-N$. Statement
(iii) says that if $2-N<b \le 3-N$,  then $\omega$ is continuous
but its first derivative goes to $+\infty$ as $(r,\eta)$
approaches the diagonal.

Finally, let us remark that(i) is sharp in the sense that the
Newtonian potential $\abs{x}^{2-N}$ is the critical one for the
integrability on hypersurfaces. Precisely, Newton's Theorem
asserts that the function $\omega$ associated to the Newtonian
potential is discontinuous, it has a singularity, across the
spherical shell. We now prove the Lemma:
\begin{proof}
Let us prove (i). The function $\omega(r,\eta)$ can be rewritten
as
$$
\omega(r,\eta)= \int_{\partial B(0,\eta)} e_1 \cdot \nabla W (r
e_1 - y) \, \frac1{\sigma_N \eta^{N-1}} \, d\sigma(y).
$$
Seeing $\omega$ as a function of $x=r e_1$ and $\eta$, we can
apply Lemma \ref{reg2} from the appendix with $\mathcal
M_{\eta}:=\partial B(0,{\eta})$, $\phi_{\eta}{(x)}:=(\sigma_N
\eta^{N-1})^{-1}$, and $G(x):= e_1 \cdot \nabla W (x)$. Since
$|G(x)|$ is bounded by  $|k'(|x|)|$ which is locally integrable on
hypersurfaces, we  obtain that $\omega \in C(\real^2_+)$.

We now turn to the proof of (ii). It is simple to check that
$$
\frac{\partial^2 W}{\partial x_i \partial x_j} = k''(r) \frac{x_i
x_j}{r^2}+k'(r)\frac{\delta_{ij}}{r} - k'(r) \frac{x_ix_j}{r^3}
$$
and then $|\frac{\partial^2 W}{\partial x_i \partial x_j}|$ is
bounded by a radial function which is locally integrable on hypersurfaces
given by a linear combination of $k''(r)$ and $r^{-1}k'(r)$.
Moreover, it has the regularity needed in Lemma \ref{reg2}. We now
rewrite the derivatives $\partial_1 \omega(r,\eta)$ and
$\partial_2 \omega(r,\eta)$ in \eqref{omega-derr} and
\eqref{omega-dereta} as
\begin{align*}
&\partial_1\omega(r,\eta)= -\frac{1}{ \sigma_{N} \eta^{N-1}}
\int_{\partial B(0,\eta)} \frac{\partial^2 W}{\partial x_1^2} (r
e_1 - y) \,d \sigma(y),
\\
&\partial_2\omega(r,\eta)= \frac{1}{
\sigma_{N} \eta^N} \int_{\partial B(0,\eta)} \nabla
\left(\frac{\partial W}{\partial x_1}\right) (r e_1 - y) \cdot y
\,d \sigma(y) \,.
\end{align*}
The reader can easily check that Lemma  \ref{reg2}
applies similarly as before, so that $\omega\in C^1(\real^2_+)$.

Finally we prove (iii). Taking the divergence of \eqref{omega-def}
we obtain:
\begin{equation}\label{divdelta}
(\text{div } v_R) (x)  =- \Delta W * \delta_{R}(x)=\partial_1
\omega( \abs{x},R) + (N-1)\frac{ \omega(\abs{x},R)}{|x|},
\end{equation}
and therefore $\partial_1 \omega (r,\eta)$ can be written:
$$
\partial_1 \omega (r,\eta)=- (\Delta W * \delta_\eta) (r e_1) - (N-1) \frac{\omega(r,\eta)}{r}.
$$
For $0<\varepsilon<r_0$, let $\chi_\varepsilon\in C^\infty(\mathbb
R_+)$ be a cut-off function, such that $\chi_\varepsilon=1$ on
$[0,\varepsilon/2]$, and $\chi_\varepsilon=0$ on
$[\varepsilon,\infty)$. Choose $\varepsilon$ such that  the
function $-\Delta W^\varepsilon(x):= - \chi_{\varepsilon}(x)
\Delta W(x)$ is nonnegative. Using  Lemma \ref{notreg} with
$\eta_1$ and $\eta_2$ such that $\eta_1 < R < \eta_2$,  and noting
that $\dist(r e_1,\mathcal \partial B(0,\eta))=|r-\eta|$, we find
that
\begin{equation*}
\lim_{\substack{(r,\eta) \notin \D\\{(r,\eta) \to
(R,R)}}}- (\Delta W^\varepsilon * \delta_\eta) (r e_1)= +\infty.
\end{equation*}
To conclude the proof, note that the functions  $(r,\eta) \mapsto
\frac{\omega(r,\eta)}{r}$ and  $(r,\eta) \mapsto
([(1-\chi_{\varepsilon}) \Delta  W ]* \delta_\eta) (r e_1)$ are
bounded in a neighborhood of $(R,R)$.
\end{proof}

%%%%%%%%%%%%%%%%%%%%%%%%%%%%%%%%%%%%%%%%%%%%%%%%%%%%%%%%%%%%%%%%%%%%%%%%%%%%%%%%%%%

\subsection{The two radial instabilities}
In this subsection we exhibit some elementary calculations in
order to understand under which conditions a spherical shell is a
stable steady state. Rigorous results about stability and
instability of spherical shell with respect to the transport
distance will be provided in section 3 and 4. This subsection
provide motivations for the rigorous results to come later.

\begin{definition}[Steady states]
A probability measure $\mu \in \prob(\real^N)$ is said to be a
steady state of the nonlocal interaction equation \eqref{pdes1} if
$$
-(\nabla W * \mu)(x) = 0 \qquad \text{ for all } x \in \text{\rm supp}(\mu).
$$
\end{definition}

We now show that if the attractive strength $k'(r)$ of a
repulsive-attractive potential $W(x)=k(|x|)$ does not decay faster
$1/r^{N}$ as $r \to \infty$, then there  exists a spherical shell
steady state.

\begin{lemma}[Existence of spherical shell steady states]
Let $W(x)=k(|x|)$ be a radially symmetric potential belonging to
$C^1(\real^N \backslash \{0\})$ and such that $k'(r)$ is locally
integrable on hypersurfaces. Let us assume that the potential is
repulsive-attractive in the following sense: there exists $R_a>0$
such that
$$
k'(r)\ge 0 \text{ for } r>R_a, \qquad \mbox{and} \qquad k'(r)<0
\text{ for }0<r< R_a.
$$
Defining for $r > 2R_a$ the function
\begin{equation} \label{sigma}
\Sigma(r) := \inf_{r/2\leq s \leq 2r} k'(s) \ge 0,
\end{equation}
we will further assume that
$$
\lim_{r\to \infty} r^{N} \Sigma(r) = +\infty .
$$
Then there exists at least a $R>0$ such that the spherical shell
$\delta_R \in \prob(\real^N)$ is a steady state to \eqref{pdes1}.
\end{lemma}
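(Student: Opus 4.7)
The plan is to reduce the problem to finding a zero of the one-variable continuous function $R \mapsto \omega(R,R)$ and apply the intermediate value theorem. Indeed, by \eqref{omega-def}, $\delta_R$ is a steady state exactly when $\omega(R,R)=0$, and by Lemma \ref{regsphere}(i), the local integrability of $k'$ on hypersurfaces guarantees $\omega \in C(\real_+^2)$, so its diagonal restriction is continuous on $(0,\infty)$. It therefore suffices to exhibit some small $R$ at which $\omega(R,R)>0$ and some large $R$ at which $\omega(R,R)<0$.

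To carry this out I would first rewrite $\omega(R,R)$ as a one-dimensional integral. Using $\nabla W(z) = k'(|z|)\, z/|z|$ together with the identity $|e_1-y|^2 = 2(1-y_1)$ on $\partial B(0,1)$, parametrizing by the angle $\theta$ between $y$ and $e_1$, and substituting $s = 2R\sin(\theta/2)$ yields, up to a positive dimensional constant $C_N$,
\begin{equation*}
\omega(R,R) = -\frac{C_N}{R^{N-1}}\int_0^{2R} k'(s)\, s^{N-1}\, \bigl(1 - s^2/(4R^2)\bigr)^{(N-3)/2}\, ds.
\end{equation*}
Integrability near $s=0$ follows from the integrability-on-hypersurfaces hypothesis, since $k'(r)\,r^{N-2}$ integrable on $(0,1)$ forces $k'(r)\,r^{N-1}$ to be integrable there as well. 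For $R < R_a/2$ the range $(0,2R)$ lies inside the repulsive zone $(0,R_a)$, so $k'<0$ throughout and the integrand has fixed sign: we immediately conclude $\omega(R,R) > 0$. This is the easy half.

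The substantive half is proving $\omega(R,R) < 0$ for $R$ large, and this is where the growth hypothesis $r^N\Sigma(r)\to +\infty$ enters. I would split the integral at $s=R_a$. The repulsive piece $\int_0^{R_a}$ is $O(1)$ uniformly in $R$, because the weight $(1-s^2/(4R^2))^{(N-3)/2}$ is uniformly bounded on this range once $R$ is large, and $k'(s)\,s^{N-1}$ is integrable on $(0,R_a)$ by the integrability-on-hypersurfaces condition together with continuity of $k'$ on $[1,R_a]$. The attractive piece $\int_{R_a}^{2R}$ is nonnegative since $k'\ge 0$ there, and restricting further to $s\in[R/2,R]$ produces a lower bound of order $R^N\,\Sigma(R)$: on that interval $k'(s)\ge \Sigma(R)$ by definition of $\Sigma$, $s^{N-1}\ge (R/2)^{N-1}$, and $(1-s^2/(4R^2))^{(N-3)/2}$ is bounded below by a positive dimensional constant. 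Since $R^N\Sigma(R)\to\infty$, this attractive contribution eventually dominates the bounded repulsive one; the bracketed integral becomes positive, and therefore $\omega(R,R) < 0$ for $R$ sufficiently large.

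The main (modest) obstacle is verifying that the attractive lower bound really scales like $R^N$ rather than $R^{N-1}$: this requires combining the $s^{N-1}$ Jacobian-type factor coming from the change of variables with the extra factor $s$ coming from $|e_1-y|$ in the original integrand, \emph{before} the prefactor $R^{-(N-1)}$ is divided out. Once that matching is done correctly, the hypothesis $r^N\Sigma(r)\to+\infty$ is exactly the rate needed to close the argument, and the existence of $R>0$ with $\omega(R,R)=0$ follows from continuity and the intermediate value theorem.
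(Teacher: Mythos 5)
Your proposal is correct and follows the same overall strategy as the paper — continuity of $F(R):=\omega(R,R)$ from Lemma \ref{regsphere}(i), $F>0$ for $2R<R_a$, $F<0$ for $R$ large, and then the intermediate value theorem — but the technical execution is genuinely different. The paper works directly with the sphere integral: it splits $\partial B(0,1)$ into the cap $A=\partial B(0,1)\cap B(e_1,R_a/r)$ and its complement, bounds the repulsive contribution by $C_1/r^{N}$ using the law-of-cosines factor $\cos\theta\le R_a/(2r)$ together with the hypersurface estimate of Lemma \ref{propmanifolds}, and bounds the attractive contribution from the subset $\{y:2|y-e_1|\ge 1\}$ below by a multiple of $\Sigma(r)$. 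You instead reduce $\omega(R,R)$ to an explicit one-dimensional radial integral via $s=2R\sin(\theta/2)$ (in the spirit of the $\psi_a$ computations of Section \ref{sec:6}), then split that integral at $s=R_a$ and restrict to $[R/2,R]$ for the attractive lower bound; this avoids Lemma \ref{propmanifolds} entirely (for concentric spheres the level sets $\{|y-e_1|=s\}$ are explicit) and the cosine suppression near $y\approx e_1$ is built into the factor $s^{N-1}$ automatically. The final comparison is identical in both routes: repulsion contributes $O(1)$ to your bracketed integral (equivalently $O(R^{-N})$ to $\omega$), attraction contributes at least $c\,R^{N}\Sigma(R)$, and $R^{N}\Sigma(R)\to\infty$ closes the argument. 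One small slip: the prefactor in your reduction should be $R^{-N}$, not $R^{-(N-1)}$; the exact identity is $\omega(R,R)=-\frac{\sigma_{N-1}}{2\sigma_N}R^{-N}\int_0^{2R}k'(s)\,s^{N-1}\,\bigl(1-s^2/(4R^2)\bigr)^{(N-3)/2}ds$ (check it against $k'(s)=s$ in $N=3$, where $\omega(R,R)=-R$). Since your argument only uses the sign of the bracketed integral and either normalization leads to the same threshold condition on $R^{N}\Sigma(R)$, this slip does not affect the validity of the proof, only the bookkeeping in your final remark.
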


\begin{proof}
Note that from \eqref{omega-def} we directly obtain that a
spherical shell $\delta_R \in \prob(\real^N)$ is a steady state if
and only if $\omega(R,R)=0$, that is, if and only if condition
{\bf (C0)} holds. Since $k'(r)$ is locally integrable on
hypersurface $\omega \in C(\real^2_+)$ due to Lemma
\ref{regsphere}.  So the function $F(r):=\omega(r,r)\in
C(\real_+)$. Using formula \eqref{omega-def2}, we get
$$
F(r)=\frac{1}{ \sigma_{N}} \int_{\partial B(0,1)} k'(r|y-e_1|)
\frac{y-e_1}{|y-e_1|} \cdot e_1 \,d \sigma(y) \, .
$$
Let us remark that $(y-e_1)\cdot e_1 \leq 0$ for all $y\in\partial
B(0,1)$, and thus for $2r< R_a$ we easily get $F(r)>0$. It is
enough to show that there exists $r> 2R_a$ such that $F(r)<0$. In
order to do this, we proceed as in \cite[Proposition 2.2]{CDFLS2}
and divide the integral in the definition of $F(r)$ into two sets:
$A:=\partial B(0,1)\cap B(e_1, R_a/r)$ and its complementary set
$A^c$. Note that the integrand is positive on $A$ and negative on
$A^c$. We will show that for $r$ large enough the integral over
the set $A^c$ is greater in absolute value than the integral over
the set $A$. It is easy to see that $B:=\{y \in \partial B(0,1)
\mbox{ such that } 2|y-e_1|\geq 1\}\subset A^c$ and
$B\neq\emptyset$ as soon as $r> 2R_a$.

We first estimate the integral
$$
\left|\int_{A} k'(r|y-e_1|)
\frac{y-e_1}{|y-e_1|} \cdot e_1 \,d \sigma(y)\right|=\int_{A} |k'(r|y-e_1|)|
\frac{(e_1-y)\cdot e_1}{|e_1-y|}  \,d \sigma(y)
$$
Let $\theta$ be the angle between $e_1-y$ and $e_1$ and note that
for all $y\in A:=\partial B(0,1)\cap B(e_1, R_a/r)$ we have by the
law of cosines
$$
\frac{e_1-y}{|e_1-y|} \cdot e_1 = \cos \theta \leq
\frac{R_a}{2r}.
$$
Using Lemma \ref{propmanifolds} from the Appendix with
$\M=\partial B(0,1)$  we then obtain
\begin{align*}
\left|\int_{A} k'(r|y-e_1|)
\frac{e_1-y}{|e_1-y|} \cdot e_1 \,d \sigma(y)\right|
&\le \frac{R_a}{2r} \int_{A} |k'(r|y -e_1|)| d \sigma(y)\\
&\le \frac{R_a}{2r} \int_0^{R_a/r} |k'(rs)|  \; |  \M \cap \{|y-e_1|=s\} |_{{\mathcal H}^{N-2}}  \; ds\\
&\le C \frac{R_a}{2r} \int_0^{R_a/r} |k'(rs)|  \;s^{N-2}  ds\\
&=  C \frac{R_a}{2r^N} \int_0^{R_a} |k'(z)|z^{N-2} dz
\le  \frac{C_1}{r^{N}}
\end{align*}
where we have used the fact that $k'(r)$ is integrable on
hypersurfaces to obtain the last inequality.

Since the integrand is negative in $A^c$ and since $B \subset A^c$
for $r>2R_a$ we have:
\begin{equation*}
\int_{A^c}k'(r|y-e_1|)\frac{y-e_1}{|y-e_1|}\cdot e_1\,d \sigma(y)\leq\int_{B}k'(r|y-e_1|)\frac{y-e_1}{|y-e_1|}\cdot e_1\,d \sigma(y).
\end{equation*}
Moreover, thanks to the law of cosines,
$\frac{y-e_1}{|y-e_1|}\cdot e_1=\cos(-\theta)\leq -1/4$ for $y\in
B$, and then using \eqref{sigma}
\begin{align*}
 \int_{B}k'(r|y-e_1|)\frac{y-e_1}{|y-e_1|}\cdot e_1\,d \sigma(y)
&\le  - \frac 14 \int_{B} k'(r|y-e_1|) \,d \sigma(y) \\
&\le - \frac 14 \int_{B} \,d \sigma(y) \, \Sigma(r) \; \le \;  -C_2 \;  \Sigma(r).
\end{align*}
Condition \eqref{sigma} on $\Sigma(r)$ implies that $C_2 \Sigma(r)
\ge C_1/r^{N}$ for $r$ large enough and therefore $F(r)<0$ for $r$
large enough. Then the continuity of $F$ implies the existence of
a radius $\tilde r>0$ such that $F(\tilde r)=0$.
\end{proof}

The following proposition gives some hints about the stability
properties of the spherical shell steady states.

\begin{proposition}[Instability modes by energy arguments] \label{energetic}
Assume that the radial interaction potential $W$ is such that
$\omega \in C^1(\real^2_+)$ and let $\delta_{R}$ be a steady
state, that is $\omega(R,R)=0$.
\begin{enumerate}
\item[(i)] If {\rm\bf (C1)} is not satisfied then by splitting the
spherical shell into two spherical shells we can decrease the
energy. More precisely   there exists $dr_0>0$ such that, given
$0<\abs{dr}< dr_0$,
$$
E[(1-\epsilon)\delta_{R}+ \epsilon \delta_{R+dr}]< E[\delta_{R}]
$$
if  $\epsilon$ is small enough.

\item[(ii)] If {\rm\bf (C2)} is not satisfied then by increasing
or decreasing the radius of the spherical shell we can decrease
the energy. More precisely there exists $dr_0>0$  such that
$$
E[\delta_{R+dr}]< E[\delta_{R}]
$$
for all $0<\abs{dr}< dr_0$.
\end{enumerate}
\end{proposition}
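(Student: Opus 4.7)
The plan is to package the pair-energy into the single kernel
\[
U(r,\eta) := \iint W(x-y)\,d\delta_r(x)\,d\delta_\eta(y) = (W*\delta_\eta)(re_1),
\]
where the last equality follows from the radial symmetry of $W*\delta_\eta$. The even symmetry of $W$ gives $U(r,\eta)=U(\eta,r)$, while formula \eqref{omega-def} yields $\partial_1 U(r,\eta)=-\omega(r,\eta)$ and hence $\partial_2 U(r,\eta)=-\omega(\eta,r)$. Differentiating once more and restricting to the diagonal produces
\[
\partial_1^2 U(R,R)=\partial_2^2 U(R,R)=-\partial_1\omega(R,R),\qquad \partial_1\partial_2 U(R,R)=-\partial_2\omega(R,R).
\]
These four identities reduce both parts of the proposition to elementary Taylor expansions.

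For part (ii) I would set $\mathcal{E}(r):=E[\delta_r]=\tfrac12 U(r,r)$, observe that $\mathcal{E}'(r)=-\omega(r,r)$ vanishes at $R$ by {\bf (C0)}, and compute $\mathcal{E}''(R)=-\bigl[\partial_1\omega(R,R)+\partial_2\omega(R,R)\bigr]$. If {\bf (C2)} fails then $\mathcal{E}''(R)<0$, and a second-order Taylor expansion at $R$ yields $E[\delta_{R+dr}]<E[\delta_R]$ for every sufficiently small $|dr|>0$.

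For part (i), writing $\mu_\epsilon:=(1-\epsilon)\delta_R+\epsilon\delta_{R+dr}$ and expanding the bilinear functional gives
\[
E[\mu_\epsilon]-E[\delta_R]=\epsilon\bigl[U(R,R+dr)-U(R,R)\bigr]+\tfrac{\epsilon^2}{2}\bigl[U(R,R)-2U(R,R+dr)+U(R+dr,R+dr)\bigr].
\]
I would Taylor-expand each $U$-term to second order in $dr$, use {\bf (C0)} to cancel the linear-in-$dr$ contributions, and substitute the diagonal identities above to obtain
\[
E[\mu_\epsilon]-E[\delta_R]=-\tfrac{\epsilon\,dr^{2}}{2}\bigl[\partial_1\omega(R,R)+\epsilon\,\partial_2\omega(R,R)\bigr]+O(\epsilon\,dr^{3}),
\]
the error being uniform in $\epsilon\in[0,1]$ since it depends only on bounds for third derivatives of $U$ near the diagonal. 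If {\bf (C1)} fails, then $\partial_1\omega(R,R)>0$; fixing $dr_0$ so that the cubic remainder is dominated by the quadratic term for $|dr|<dr_0$, and then choosing $\epsilon$ small enough that $\epsilon|\partial_2\omega(R,R)|<\tfrac12\partial_1\omega(R,R)$, makes the right-hand side strictly negative.

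The only real subtlety is the correct identification of the mixed second derivative, $\partial_1\partial_2 U(R,R)=-\partial_2\omega(R,R)$, which is what separates the fattening perturbation (whose leading-order cost is governed by $\partial_1\omega$ alone, because the cross term carries an extra power of $\epsilon$) from the shift perturbation (governed by $\partial_1\omega+\partial_2\omega$); getting this symmetry bookkeeping right is the crux of the argument. The remaining technical check is simply to verify that the Taylor remainders are uniform in $\epsilon$ so that the two limits $dr\to 0$ and $\epsilon\to 0$ may be taken in the stated order.
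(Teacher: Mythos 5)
Your proof is correct and takes essentially the same route as the paper's: you work with the pair energy $U(r,\eta)$ (the paper's $E(r,\eta)=\tfrac12 U(r,\eta)$), identify $\partial_1 U=-\omega$ and the diagonal second derivatives, and combine the bilinear expansion of $E[(1-\epsilon)\delta_R+\epsilon\delta_{R+dr}]$ with Taylor expansions at $(R,R)$, exactly as in the paper. The one small caveat is that under the stated hypothesis $\omega\in C^1(\real^2_+)$ the kernel $U$ is only $C^2$, so the remainder should be recorded as $\epsilon\, o(dr^2)$ (automatically uniform in $\epsilon$, since the three Taylor expansions involve no $\epsilon$) rather than $O(\epsilon\, dr^3)$ controlled by third derivatives of $U$, which need not exist; this does not affect the conclusion.
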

\begin{proof} Let us introduce the notations
$$
 E[\mu,\nu]:= \frac{1}{2} \iint_{\real^N \times \real^N} W(x-y) d \mu(x) d \nu(y),
$$
so that $E[\mu,\mu]=E[\mu]$, and
$$
 E(r,\eta):= E[ \delta_{r} , \delta_{\eta} ]
 =\frac{1}{2}\frac{1}{\sigma_{N}^2}
  \iint_{\partial B(0,1) \times \partial B(0,1)}  W(r x-\eta y)  \, d\sigma(x) d
  \sigma(y)\,.
$$
Taking the derivative we get:
\begin{align*}
\der{E}{r}(r,\eta)&=\frac{1}{2}\frac{1}{\sigma_{N}^2}
  \iint_{\partial B(0,1) \times \partial B(0,1)}  \nabla W(r x-\eta y) \cdot x \, d\sigma(x) d \sigma(y)\\
  &=\frac{1}{2}\frac{1}{\sigma_{N}}
   \int_{\partial B(0,1)}  \left(  \frac{1}{\sigma_{N}}   \int_{\partial B(0,1)}  \nabla W(r x-\eta y) d \sigma(y)  \right)   \cdot x \, d\sigma(x) \\
    &=\frac{1}{2}\frac{1}{\sigma_{N}}
   \int_{\partial B(0,1)}  \Big( \nabla W *\delta_{\eta} (r x)   \Big)   \cdot x \, d\sigma(x) =-
   \frac{1}{2}\omega(r,\eta)\,.
\end{align*}
Since $E(r,\eta)=E(\eta,r)$, the Hessian matrix of $E(r,\eta)$ is
given by
$$
H(r,\eta)=-\frac{1}{2}\left[ \begin{array}{cc}
\partial_1{\omega}(r,\eta) &\partial_2{\omega}(r,\eta)\\
\partial_2{\omega}(\eta,r) &\partial_1{\omega}(\eta,r)
\end{array}\right].
$$
If $\delta_{R}$ is a steady state, i.e. $\omega(R,R)=0$, then $\nabla E(R,R)=0$ and
\begin{gather}\label{papa}
E(R+dr,R) = E(R,R)- \frac{1}{4} \partial_1 \omega(R,R) dr^2+o(dr^2) \\
\label{maman}
E(R+dr,R+dr) = E(R,R) - \frac{1}{2}(\partial_1 \omega(R,R)+\partial_2 \omega (R,R)) \; dr^2  + o(dr^2)
\end{gather}
The proof of (ii) follows directly from the Taylor expansion
\eqref{maman}. By using the Taylor expansions \eqref{papa} and
\eqref{maman} as well as  the bilinearity of $E[\mu]=E[\mu,\mu]$:
\begin{align*}
E\Big[(1-\epsilon)\delta_{\partial B(0,R)}&+ \epsilon \delta_{\partial B(0,R+dr)}\; , \; (1-\epsilon)\delta_{\partial B(0,R)}+ \epsilon \delta_{\partial B(0,R+dr)}\Big]\\
=&(1-\epsilon)^2E(R,R)+2\epsilon(1-\epsilon)E(R+dr,R)+\epsilon^2 E(R+dr,R+dr)\\
=& E(R,R)- \frac{\epsilon}{2} \partial_1\omega(R,R) dr^2-
\frac{\epsilon^2}{2}  \partial_2 \omega (R,R)) \; dr^2  + o(dr^2)
\end{align*}
from which (i) follows by taking  $\epsilon$ and $dr_0$ small enough.
\end{proof}

The following elementary Lemma shows that the instability
condition $\partial_1 \omega(R,R)>0$ (i.e. {\rm\bf (C1)} is not
satisfied) simply means that the divergence of the velocity field
generated by the spherical shell is positive on the spherical
shell. Being the velocity field ``expanding", it makes sense that
splitting the spherical shell into two reduces the energy as
proven in previous Proposition \ref{energetic}.

\begin{lemma}[Divergence of the velocity field] \label{div-lemma}
Assume the spherical shell $\delta_{R}$ is a steady state, i.e.,
condition {\rm\bf (C0)}: $\omega(R,R)=0$. Let $v_R$ be the
velocity field generated by $\delta_R$, given by
\eqref{omega-def}. Then
\begin{equation} \label{div22}
(\text{\rm div } v_R) (x)= \partial_1 \omega(R,R) \qquad \text{for
all } x\in \partial B(0,R).
\end{equation}
\end{lemma}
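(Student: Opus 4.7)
The plan is to recognize that this is essentially an immediate consequence of the radial structure of $v_R$ and the assumption $\omega(R,R)=0$. In fact, the identity has already been derived in passing in the proof of Lemma \ref{regsphere}(iii), as formula \eqref{divdelta}:
$$
(\text{div } v_R)(x) \;=\; \partial_1\omega(|x|,R) \;+\; (N-1)\,\frac{\omega(|x|,R)}{|x|}.
$$
So really all that remains is to evaluate this at points of $\partial B(0,R)$ and invoke {\bf (C0)}.

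First I would recall from \eqref{omega-def} that $v_R(x)=\omega(|x|,R)\,x/|x|$, i.e., $v_R$ is radial with radial profile $f(r):=\omega(r,R)$. Then I would use the standard formula for the divergence of a radial vector field,
$$
\text{div}\!\left(f(r)\,\tfrac{x}{|x|}\right) \;=\; f'(r) \;+\; \tfrac{N-1}{r}\,f(r),
$$
which for $f(r)=\omega(r,R)$ reproduces \eqref{divdelta}. Substituting $|x|=R$ and using the standing hypothesis $\omega(R,R)=0$, the boundary term $(N-1)\omega(R,R)/R$ drops out, leaving exactly $\partial_1\omega(R,R)$, as claimed.

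There is no real obstacle here: the only subtlety is to make sure $\partial_1\omega$ is well-defined at $(R,R)$, which is guaranteed whenever $\omega\in C^1(\real_+^2)$ as assumed in the context of the stability statements (cf.\ Lemma \ref{regsphere}(ii)). In particular, I would not need the alternative formula $(\text{div } v_R)(x)=-\Delta W*\delta_R(x)$, although it provides a convenient sanity check: convolution with the radial measure $\delta_R$ of the radial function $-\Delta W$ is constant on $\partial B(0,R)$, which is exactly what the lemma asserts.
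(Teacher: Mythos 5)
Your proof is correct and follows essentially the same route as the paper: the paper's proof simply cites \eqref{divdelta} and the hypothesis $\omega(R,R)=0$, exactly as you do, with your radial-divergence computation being just the derivation of \eqref{divdelta} spelled out. No gaps.
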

\begin{proof}
This is a direct consequence of \eqref{divdelta} together with the fact
 that $\omega(R,R)=0$.
\end{proof}

\subsection{Well-posedness of $L^p$-solutions}
Global existence and uniqueness of  $L^p$-solutions of equation
\eqref{pdes1} was established in \cite[Theorem 1]{BLR} under some
conditions on the interaction potential $W$:

\begin{theorem}[$L^p$-Well posedness theory]\label{BLR}
Consider $1<q<\infty$ and $p$ its H\"older conjugate. Suppose
$\nabla W\in {\mathcal W}^{1,q}(\real^N)$ and  $\mu_0 \in
L^p(\real^N)\cap \prob_2(\real^N)$ is nonnegative. Then  there
exists a time $T^* > 0$ and a nonnegative function $\mu\in
C([0,T^*],L^p(\real^N))\cap C^1([0,T^*],{\mathcal W}^{-1,p}
(\real^N))$ such that \eqref{pdes1} holds in the sense of
distributions in $\real^N \times (0,T^*)$ with $\mu(0) = \mu_0$.
Moreover the second moment of $x \mapsto \mu(t,x)$ remains bounded
and the $L^1$ norm is conserved. Also, the function $t \to
\norm{\mu(t)}_{L^p}^p$ is differentiable and satisfies
\begin{equation}
\frac{d}{dt}\curlyp{\|\mu(t)\|_{L^{p}}^p} = - (p-1) \int_{\real^N}
\mu(t,x)^p \mbox{\rm div }  v(t,x) \; dx\label{lp-equality} \qquad
\forall t \in [0,T^*].
\end{equation}
Furthermore, if ${\rm ess~sup~} \Delta W < +\infty$, then  $t \to
\norm{\mu(t)}_{L^p}^p$ does not grow faster than exponentially and
we have global well-posedness.
\end{theorem}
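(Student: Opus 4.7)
The strategy is to combine a Banach fixed-point argument at the level of characteristics with a DiPerna--Lions style renormalization identity for the $L^p$ estimate. The starting observation is that whenever $\mu \in L^p(\real^N)$ and $\nabla W \in {\mathcal W}^{1,q}(\real^N)$ with $p,q$ H\"older conjugate, Young's convolution inequality yields $v = -\nabla W * \mu \in W^{1,\infty}(\real^N)$ together with the quantitative bound $\|v\|_{W^{1,\infty}} \le \|\nabla W\|_{{\mathcal W}^{1,q}}\|\mu\|_{L^p}$. Hence the velocity field is globally Lipschitz in space, and the characteristic ODE $\dot X_t(x) = v(t,X_t(x))$, $X_0(x)=x$, is uniquely solvable.

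For the local existence, I would fix $T>0$ small and consider the closed ball $B_M$ in $C([0,T]; L^p(\real^N))$ of curves starting at $\mu_0$ and bounded by some $M>\|\mu_0\|_{L^p}$. Define $\Phi\colon B_M \to C([0,T];L^p)$ by $\Phi(\mu)(t) := (X_t^\mu)_{\#} \mu_0$, where $X_t^\mu$ are the characteristics generated by $v_\mu := -\nabla W * \mu$. The Jacobian change of variables, combined with the Lipschitz control on $v_\mu$, gives $\|\Phi(\mu)(t)\|_{L^p} \le e^{CMt}\|\mu_0\|_{L^p}$, so $\Phi$ maps $B_M$ into itself for $T$ small; a parallel estimate comparing two flows, using the linear (hence Lipschitz) dependence $\|v_{\mu_1}-v_{\mu_2}\|_\infty \le \|\nabla W\|_{L^q}\|\mu_1-\mu_2\|_{L^p}$, shows $\Phi$ is a strict contraction. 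Conservation of the $L^1$ norm and nonnegativity follow immediately from pushforward by a diffeomorphism of a nonnegative probability measure; the second moment bound comes from $|v_\mu| \le C\|\mu\|_{L^p}$ via $\frac{d}{dt}\!\int |x|^2 \mu\, dx = 2\int x\cdot v_\mu\, \mu\, dx$; and the $C^1([0,T]; {\mathcal W}^{-1,p})$ regularity is read off directly from the equation $\partial_t \mu = -\mathrm{div}(\mu v_\mu)$ together with the bounds on $\mu$ and $v_\mu$.

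The main obstacle is the rigorous derivation of \eqref{lp-equality}, since $\mu \in L^p$ alone is too rough for a naive chain rule applied to $\mu^p$. I would handle this by mollifying the transport equation: with a standard mollifier $\rho_\varepsilon$ and $\mu_\varepsilon := \rho_\varepsilon * \mu$, one obtains
\begin{equation*}
\partial_t \mu_\varepsilon + \mathrm{div}(\mu_\varepsilon v) = r_\varepsilon, \qquad r_\varepsilon := \mathrm{div}(\mu_\varepsilon v) - \rho_\varepsilon * \mathrm{div}(\mu v),
\end{equation*}
and the DiPerna--Lions commutator lemma ensures $r_\varepsilon \to 0$ in $L^p_{\mathrm{loc}}$ because $v\in W^{1,\infty}$. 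Applying the chain rule to $\mu_\varepsilon^p$ is legitimate, integrating by parts gives $\frac{d}{dt}\|\mu_\varepsilon\|_{L^p}^p = -(p-1)\!\int \mu_\varepsilon^p\,\mathrm{div}\, v\,dx + p\!\int \mu_\varepsilon^{p-1} r_\varepsilon\, dx$, and passing $\varepsilon \to 0$ yields the identity \eqref{lp-equality}.

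Finally, if $\mathrm{ess\,sup}\,\Delta W < +\infty$, then using $\mu \ge 0$ and $\|\mu(t)\|_{L^1}=1$ we get
\begin{equation*}
\mathrm{div}\, v(t,x) = -(\Delta W * \mu)(t,x) \ge -\mathrm{ess\,sup}\,\Delta W =: -C,
\end{equation*}
so \eqref{lp-equality} gives $\frac{d}{dt}\|\mu(t)\|_{L^p}^p \le (p-1)C\,\|\mu(t)\|_{L^p}^p$, and Gronwall produces an a priori exponential bound preventing $L^p$ blow-up, hence extending the local solution to all of $[0,+\infty)$.
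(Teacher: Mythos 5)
The paper does not actually prove this theorem: it is quoted, as stated, from \cite[Theorem 1]{BLR} (and the extension to growing potentials from \cite{BalagueCarrillo}), so there is no internal proof to compare against and your argument has to stand on its own.

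It does not, because the central fixed-point step has a genuine gap. You propose to contract $\Phi(\mu)(t):=(X^\mu_t)_\#\mu_0$ on a ball of $C([0,T];L^p(\real^N))$, and the self-map estimate via the Jacobian is fine; but the contraction estimate is not. Comparing $(X^1_t)_\#\mu_0$ and $(X^2_t)_\#\mu_0$ in $L^p$ forces you to estimate $\mu_0\circ(X^1_t)^{-1}-\mu_0\circ(X^2_t)^{-1}$, i.e.\ the $L^p$-modulus of continuity of $\mu_0$ under small deformations. For a general $\mu_0\in L^p$ this has no Lipschitz rate: translating an indicator function by $\delta$ changes it in $L^p$ only by order $\delta^{1/p}$, and for rough $\mu_0$ there is no quantitative rate at all. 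So even though $\|v_{\mu_1}-v_{\mu_2}\|_\infty\le\|\nabla W\|_{L^q}\|\mu_1-\mu_2\|_{L^p}$ and hence the flows are uniformly close, $\Phi$ is not Lipschitz on $C([0,T];L^p)$ and the claimed strict contraction fails precisely at the low regularity of the data the theorem allows. Nor can you instead contract directly on the flow maps with the naive bound $|\nabla W(x-X^1(y))-\nabla W(x-X^2(y))|\le \mathrm{Lip}(\nabla W)\,|X^1(y)-X^2(y)|$, since $\nabla W\in{\mathcal W}^{1,q}$ is not pointwise Lipschitz. The standard ways out, and essentially what is done in \cite{BLR}, are either (a) to measure the distance between the two curves in a transport metric ($d_1$ or $d_2$), under which pushforwards of the same $\mu_0$ by uniformly close flows \emph{are} Lipschitz-close, and to close the loop by estimating $\|\nabla W*(\mu_1-\mu_2)\|_{\infty}$ through a change of variables onto the transported densities (which stay bounded in $L^p$) paired with $D^2W\in L^q$, in the spirit of Loeper's uniqueness argument; or (b) to build the solution by approximation (mollified data/kernel), use the a priori identity \eqref{lp-equality} to get a uniform $L^p$ bound and existence time, pass to the limit by compactness, and prove uniqueness separately by the optimal-transport argument. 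Your remaining ingredients are sound in spirit: the DiPerna--Lions commutator derivation of \eqref{lp-equality} works since $v\in{\mathcal W}^{1,\infty}$ (with a routine tightness/tail argument to integrate $\mu_\varepsilon^{p-1}r_\varepsilon$ over all of $\real^N$), the second-moment Gronwall bound is correct, and the global continuation under $\mathrm{ess\,sup}\,\Delta W<+\infty$ follows as you say once the local theory, with an existence time depending only on $\|\mu_0\|_{L^p}$, is actually in place.
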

In the above theorem $\prob_2(\real^N)$ stands for the space of
probability measure with finite second moment.   We will refer to
the solutions provided by the above theorem as {\it
$L^p$-solutions}.

One can find in \cite{BalagueCarrillo} that the authors extend the
global-in-time well posedness $L^p$-theory to repulsive-attractive
potentials under suitable conditions. We summarize the result in
the following theorem.

\begin{theorem}[Dealing with possibly growing at $\infty$ attractive potentials]
Assume that $W(x)=k(|x|)$ is a radially symmetric
repulsive-attractive potential,
$W(x)=(W_R+W_A)(x)=(k_R+k_A)(\abs{x})=k(\abs{x})$ with $k\in
C^2((0,+\infty))$, $W_A$ attractive (i.e. $k_A'>0$), with $\nabla
W\in {\mathcal W}_{loc}^{1,q}(\real^N)$, $1<q<\infty$, and $W_R$
compactly supported repulsive ($k_R'\leq 0$). Furthermore, assume
that $k$ satisfies:
\begin{enumerate}[(i)]
\item  $\exists \delta_1 >0$ such that $k''(r)$ is monotonic in $(0,\delta_1)$.
\item $\exists \delta_2>0$ such that $rk''(r)$ is monotonic in $(0,\delta_2)$.
\item $D:=\sup_{r\in(0,\infty)}\abs{k'_R(r)}<\infty$
\item There exists $m$ such that $\frac{k_A(r)}{1+r^m}$ is bounded and increasing.
\end{enumerate}
Then there exists a global in time solution for the equation
\eqref{pdes1} with compactly supported initial data $\mu_0\in
L^p(\real^N)$, which is compactly supported for all $t\geq 0$.
\end{theorem}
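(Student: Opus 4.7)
The plan is to extend the local-in-time $L^p$ theory of Theorem \ref{BLR} to a global result by first truncating the attractive part of $W$ to make $\operatorname{ess\,sup}\Delta W$ finite, and then showing that the support of the resulting solution stays bounded uniformly in the truncation parameter, so that the truncation is in fact invisible. Concretely, for each large $M$ one constructs a modified potential $W^M = W_R + W_A^M$ with $W_A^M = W_A$ on $[0,M]$ and $W_A^M$ smoothly extended beyond $M$ so that $(W_A^M)''$ is bounded; the hypotheses of Theorem \ref{BLR} then apply globally to $W^M$ and yield a global $L^p$-solution $\mu^M \in C([0,\infty);L^p(\real^N))$ starting from $\mu_0$.

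The heart of the argument is an a priori support bound independent of $M$. Fix $R_0>0$ with $\operatorname{supp}\mu_0\subset B(0,R_0)$ and set $\rho^M(t):=\sup\{|x|:x\in \operatorname{supp}\mu^M(t)\}$. At an outermost point $X$ of the support every mass element $y$ lies in $B(0,|X|)$, so $(X-y)\cdot X\ge 0$ and
\begin{equation*}
-\nabla W_A\ast \mu^M(X)\cdot\frac{X}{|X|}
= -\int k_A'(|X-y|)\,\frac{(X-y)\cdot X}{|X-y|\,|X|}\,d\mu^M(t,y)\le 0,
\end{equation*}
since $k_A'\ge 0$. Meanwhile, because $W_R$ is compactly supported, its contribution to the outward drift is controlled in absolute value by the constant $D$ from (iii) times a factor depending only on the range of $k_R$. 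This gives a differential inequality for $\rho^M(t)$ with at most linear growth, whence no finite-time blow-up and a bound $\rho^M(t)\le R(T)$ on $[0,T]$ independent of $M$ for $M\ge R(T)$. For such $M$ the potentials $W^M$ and $W$ coincide on a neighborhood of $\operatorname{supp}\mu^M(t)$ for all $t\in[0,T]$, so $\mu^M$ is genuinely a solution of \eqref{pdes1} with potential $W$; since $T$ is arbitrary, a continuation argument yields the desired global-in-time, compactly supported $L^p$-solution.

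The main obstacle is to make the support estimate rigorous at the level of merely $L^p$-solutions, where characteristics are not a priori well-defined. A clean way is to approximate $\mu_0$ by smooth compactly supported data, for which the characteristic ODE is meaningful, establish the inward-drift inequality above at the smooth level, and pass to the limit using the uniqueness and stability from Theorem \ref{BLR}. An alternative is to test the equation directly against a smooth radially increasing truncation of $(|x|-\rho(t))_+^2$ and Gr\"onwall the resulting identity. Conditions (i) and (ii) on the monotonicity of $k''$ and $rk''$ near the origin are precisely those needed to control the $L^p$-identity \eqref{lp-equality} in the proof of Theorem \ref{BLR} applied to each $W^M$, while (iv) guarantees that the convolution $\nabla W_A \ast \mu^M$ remains integrable against a compactly supported measure and that the second-moment bound survives passage to the limit.
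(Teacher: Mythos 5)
Your overall skeleton (cut off the attractive part far from the origin, solve the truncated problem, prove a support bound uniform in the truncation, then observe the truncation is invisible and continue) is indeed the standard route; the paper itself does not prove this statement (it is quoted from the reference cited just above it), but it runs exactly this scheme for the power-law analogue in Section~\ref{sec:6}. The support estimate you give at the outermost point, using $k_A'\ge 0$ and $\abs{\nabla W_R*\mu}\le D$ from (iii) to get $\frac{d}{dt}\rho^M(t)\le D$, is correct (with the minor fix that $W^M$ must agree with $W$ on the ball of radius $2R(T)$ of \emph{differences} $x-y$, not merely on a neighborhood of the support, so one needs $M\gtrsim 2R(T)$).

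The genuine gap is the claim that ``the hypotheses of Theorem~\ref{BLR} then apply globally to $W^M$.'' Truncating the attractive tail does nothing at the origin, and the hypotheses of the statement do \emph{not} force $\operatorname{ess\,sup}\Delta W<+\infty$: they only require $\nabla W\in\mathcal W^{1,q}_{loc}$, plus the monotonicity conditions (i)--(ii). For example, in $N=3$ take $k_A(r)=r^{3/2}$ near the origin and a smooth bounded compactly supported $k_R$; then (i)--(iv) and the local Sobolev condition hold, yet $\Delta W=k''+(N-1)k'/r\sim C r^{-1/2}\to+\infty$, so the ``Furthermore'' clause of Theorem~\ref{BLR} never applies to $W^M$, no matter how you cut the tail. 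Hence your construction only yields a \emph{local} $L^p$-solution for each $W^M$, and the actual content of the theorem --- ruling out finite-time blow-up of $\norm{\mu^M(t)}_{L^p}$ via \eqref{lp-equality} when $(\Delta W)_+$ is unbounded near the origin --- is exactly the step you leave as an assertion (``conditions (i) and (ii) are precisely those needed to control the $L^p$-identity''), without any estimate of $\Delta W*\mu$ in terms of $\norm{\mu}_{L^p}$; note that the naive bound $\Delta W*\mu\le C(1+\norm{\mu}_{L^p})$ leads to an ODE that can blow up in finite time, so a genuinely structural argument using the monotonicity of $k''$ and $rk''$ is required here, and it is missing. A secondary, fixable inaccuracy in the same step: making $(W_A^M)''$ bounded beyond $M$ does not give $\nabla W^M\in\mathcal W^{1,q}(\real^N)$ globally (no decay at infinity); one should instead truncate the force itself so that $\nabla W_A^M$ is compactly supported, as the paper does with the cut-off $\chi_\varepsilon$ in the proof of Theorem~\ref{theorem:globalpowers}.
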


%%%%%%%%%%%%%%%%%%%%%%%%
\section{The Fattening instability and dimensionality of the steady state}
\label{sec:3}

\subsection{The radially symmetric case}

This first subsection concerns radially symmetric solutions. We
show that if the singularity of $W$ at the origin is such that the
kernel $\omega$ is $C^1$, and if condition {\bf (C1)} is not
satisfied, then a radially symmetric solution can not converge
weakly toward the spherical shell stationary state. We also show
that the same result holds if the singularity of $W$ at the origin
is so strong that the kernel $\omega$ is not $C^1$ (and this is
independent of how strong the attractive part of the potential
is).

\begin{theorem}[Instability of spherical shells: radially symmetric case]   \label{fat-inst1}
Let $W(x)=k(|x|)$ be  a radially symmetric potential  belonging to
$C^3(\real^N \backslash \{0\})$ and such that $k'(r)$ is locally
integrable on hypersurface (so that $\omega$ is continuous).
Assume that the spherical shell $\delta_{R}$ is a  steady state,
that is, {\bf (C0)}: $\omega(R,R)=0$, and that one of the two
following hypotheses hold:
\begin{enumerate}
\item[{\rm (i)}]  $k''(r)$ and $r^{-1}k'(r)$ are locally
integrable on hypersurfaces (so that $\omega$ is $C^1$), and
 $\partial_1
\omega(R,R)>0$. \item[{\rm (ii)}] $\Delta W$ is negative in a
neighborhood of the origin and is not locally integrable on
hypersurfaces (in which case $\omega$ is not $C^1$ and
$\lim_{\substack{(r,\eta) \notin \D\\{(r,\eta) \to
(R,R)}}}\partial_1 \omega (r,\eta)=+\infty$).
\end{enumerate}
Then it is not possible for an $L^p$ radially symmetric solution
of \eqref{pde1r}-\eqref{pde2r} to converge weakly-$*$ as measures
to $\delta_{R}$ as $t\to \infty$.
\end{theorem}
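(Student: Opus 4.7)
The plan is to argue by contradiction. Suppose $\mu_t$ is a radially symmetric $L^p$ solution with $\mu_t \rightharpoonup^* \delta_R$ as $t \to \infty$, and derive a contradiction by tracking the radial variance
\[
\sigma^2(t) := \int_0^\infty (r-\bar r_t)^2\,d\hat\mu_t(r),\qquad \bar r_t := \int_0^\infty r\,d\hat\mu_t(r),
\]
which must vanish in the limit yet I will show grows exponentially. The uniform second moment provided by Theorem \ref{BLR}, combined with tightness, upgrades $\hat\mu_t\rightharpoonup^* \delta_R$ to convergence of quadratic moments, so $\bar r_t \to R$ and $\sigma^2(t) \to 0$. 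On the other hand $\sigma^2(t)>0$ for every $t$, because an $L^p$ density cannot be concentrated on a single sphere.

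Differentiating using the radial conservation law \eqref{pde1r}--\eqref{pde2r}, integrating by parts, and then using $\int(r-\bar r_t)\,d\hat\mu_t = 0$, one obtains
\[
\dot\sigma^2(t) = 2\int_0^\infty (r-\bar r_t)\bigl(\hat v(t,r) - \hat v(t,\bar r_t)\bigr)\,d\hat\mu_t(r).
\]
Writing $\hat v(t,r)-\hat v(t,\bar r_t) = \int_{\bar r_t}^{r} \partial_r\hat v(t,s)\,ds$ with $\partial_r\hat v(t,s) = \int_0^\infty \partial_1\omega(s,\eta)\,d\hat\mu_t(\eta)$, the integrand is pointwise nonneg\-ative whenever $\partial_r\hat v(t,\cdot)>0$ on the segment joining $\bar r_t$ and $r$, and is bounded below there by $(r-\bar r_t)^2\inf_s \partial_r\hat v(t,s)$. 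The crux is thus to show that on a fixed small window $(R-\delta,R+\delta)$ one has $\partial_r\hat v(t,s)\geq c>0$ for $t$ large. Under (i), this follows from $\partial_r\hat v(t,s)\to\partial_1\omega(s,R)$ by weak-$*$ convergence and the $C^1$ regularity of $\omega$, together with $\partial_1\omega(R,R)>0$ and continuity. Under (ii), the limit \eqref{meme} from Lemma \ref{regsphere}(iii) yields $\partial_r\hat v(t,s)\to+\infty$ uniformly on the window. Since $\int(r-R)^2\,d\hat\mu_t = \sigma^2(t)+(\bar r_t-R)^2\to 0$, the tails $\{|r-R|\geq\delta\}$ contribute negligibly to $\sigma^2$, so one arrives at $\dot\sigma^2(t)\geq c\,\sigma^2(t)$ for $t$ large. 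Gr\"onwall then forces $\sigma^2(t)\to+\infty$, contradicting $\sigma^2(t)\to 0$.

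The main obstacle is the uniform positive lower bound on $\partial_r\hat v(t,s)$ in case (ii), where the kernel $\partial_1\omega(s,\eta)$ is unbounded on the diagonal. One must both justify differentiation under the integral (using the $L^p$ density of $\hat\mu_t$ to tame the singular integrand) and upgrade the off-diagonal limit \eqref{meme} to a uniform lower bound on a window around $R$, through a careful splitting of the $\eta$-integration near $\eta = s$ in the same spirit as the proof of Lemma \ref{regsphere}(iii). A subsidiary technical point is quantifying that the tail contribution to $\sigma^2$ vanishes fast enough relative to the near-shell bound; this is handled via the second-moment bound rather than the $L^p$ norm, since weak-$*$ convergence controls mass but not $L^p$ content on sets of vanishing Lebesgue measure.
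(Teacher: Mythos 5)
There is a genuine gap at the very first step: you cannot upgrade weak-$*$ convergence as measures to convergence of first and second radial moments. Theorem~\ref{BLR} only gives that the second moment stays bounded, and weak-$*$ convergence together with a bounded second moment yields only lower semicontinuity of $\int (r-R)^2\,d\hat\mu_t$, not $\sigma^2(t)\to 0$ or $\bar r_t\to R$: a vanishing fraction of the mass placed at a radius growing in time is perfectly compatible with $\hat\mu_t\rightharpoonup^*\delta_R$ while keeping $\sigma^2(t)$ bounded away from zero (or even unbounded). Since the theorem is precisely about excluding this very weak mode of convergence, the case your reduction discards is the delicate one; the paper treats it separately (Step 2) by a different mechanism, namely that $\hat v(t,R-\epsilon)<0$ and $\hat v(t,R+\epsilon)>0$ for large $t$ (using $\omega(R,R)=0$ and the local strict monotonicity of $r\mapsto\omega(r,\eta)$ coming from (i) or from \eqref{meme}), so that after some time no mass can enter $[R-\epsilon,R+\epsilon]$, contradicting weak-$*$ convergence when mass persists outside.

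Even in the regime where the mass does concentrate, your Gr\"onwall inequality $\dot\sigma^2\ge c\,\sigma^2$ is not secured under the stated hypotheses. The integrand $(r-\bar r_t)\bigl(\hat v(t,r)-\hat v(t,\bar r_t)\bigr)$ must be controlled for $r$ outside the window $(R-\delta,R+\delta)$, where it can be negative and large (long-range attraction pulls distant mass inward), and likewise the lower bound $\partial_r\hat v(t,s)\ge c$ on the window requires controlling $\int_{|\eta-R|\ge\delta}\partial_1\omega(s,\eta)\,d\hat\mu_t(\eta)$, where $\partial_1\omega(s,\eta)$ may grow as $\eta\to\infty$; both points need growth conditions at infinity of the type \eqref{as1}--\eqref{as3} assumed in Theorem~\ref{stability2}, which are not hypotheses of Theorem~\ref{fat-inst1}. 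The paper avoids this by first reducing to the case where the \emph{support} of $\hat\mu_t$ eventually lies in $(R-\delta,R+\delta)$ (its Step 1), so that the velocity differences only sample $\eta$ in the window where $r\mapsto\omega(r,\eta)$ is increasing, and then arguing via ordered characteristics ($r_1(t),r_2(t)$ with non-decreasing separation) rather than via a variance; your variance argument could be repaired along these lines if you assume $d_\infty$-convergence, but that only reproduces Step 1 and still leaves the genuinely weak-$*$ case untreated.
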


To clarify the result, let us consider the case where the
repulsive-attractive potential $W(x)=k(|x|)$ has its repulsive
part described by  a powerlaw. Let say, for example, that
$k(r)=-r^b/b$,  for all $r <1$ and $k'(r)>0$ for $r>2$.   If $2-N
< b \le 3-N$, then $\Delta W$ is not locally integrable on
hypersurfaces and therefore, according to (ii), whatever is the
behavior of $W(x)$ for $|x|>1$, $L^p$ radially symmetric solutions
can not converge toward the steady state. In other words if the
repulsive singularity of the potential is equal to or stronger
than $\abs{x}^{3-N}$ then the potential is so repulsive in the
short range that solution can not concentrate on a spherical
shell, and this is independent of how attractive the potential is
in the long range. On the other hand if $b> 3-N$ then the kernel
$\omega$ is $C^{1}$. In this case, the balance between the
repulsive part and the attractive part of the potential dictates
whether or not the spherical shell is an attractor: if $\partial_1
\omega(R,R)>0$, then the repulsive part dominates and the
spherical shell is not an attractor.

We remind, see \cite{MR1964483,MR2059493,MR2219334}, that for
$1\le p < \infty$ the distance $d_p$ between two measures
$\nu,\,\rho$ is defined by
\[
    d_p^p(\nu,\rho)=\inf_{\pi\in\Pi(\nu,\rho)}\left\lbrace \int_{\R^N\times \R^N} |x-y|^pd\pi(x,y)\right\rbrace,
\]
where $\Pi(\nu,\rho)$ is the set of those joint distribution
functions with marginals $\nu$ and $\rho$. When $p=+\infty$ then
the distance is defined as
\[
    d_\infty(\nu,\rho) = \inf_{\mathcal T:\real^N\longrightarrow \real^N} \left\lbrace \sup_{y\in\R^N} \abs{y-\mathcal T (y)}\,:\, \mathcal T \#\rho = \nu\right\rbrace.
\]
We now prove the Theorem.
\begin{proof}
If conditions (ii) of the Theorem holds, then from \eqref{meme} of Lemma \ref{regsphere},  it is clear that there
exists $\delta>0$ such that
\begin{equation} \label{monotone}
\text{$\forall \eta \in (R-\delta,R+\delta)$, $r \to\omega(r,\eta)$ is strictly increasing in $(R-\delta,R+\delta)$}.
\end{equation}
Of course \eqref{monotone} also trivially holds if condition (i) of the Theorem is satisfied.
We proceed by contradiction. Assume that $\mu(x,t)=\mu_t(x)$ is an
$L^p$ radially symmetric solution which  converges weakly-$*$ as
measures to a spherical shell of radius $R$ as $t \to \infty$.

{\it Step 1.} Assume first that ${\mu}_t$ converges toward
$\delta_R$ not only weakly-$*$ as measures but also with respect
to the $d_\infty$-topology. This implies that the support of the
radial solution $\hat{\mu}_t$ to \eqref{pde1r} converge to the
point $\{R\}$. Choose $T>0$ such that $\text{supp}( \hat{\mu}_t)
\subset (R-\delta,R+\delta)$ for all $t>T$. Using the monotonicity
property \eqref{monotone} we obtain that for $ t\ge T$ and for
$R-\delta< r_1 < r_2<R+\delta$
$$
\hat{v}(t,r_2)-\hat{v}(t,r_1) =
 \int_{R-\delta}^{R+\delta} \omega(r_2,\eta)-  \omega(r_1,\eta) \; d\hat{ \mu}_t( \eta) \ge 0
$$
where $\hat{v}$ is the velocity field in radial coordinate defined
by \eqref{pde2r}. Therefore for all $t \ge T$ the function $r \to
\hat{v}(t,r)$ is increasing on $(R-\delta,R+\delta)$. Let $r_1(t)$
and $r_2(t)$ be two solutions of the ODE $r_i'(t)=
\hat{v}(t,r_i(t))$, $i=1,2$. Since
$$
\frac{d}{dt} (r_2(t)-r_1(t))^2= 2  (r_2(t)-r_1(t))
(\hat{v}(t,r_2(t))-\hat{v}(t,r_1(t))) \ge 0 \, ,
$$
we easily see that if  for some time $t\ge T$, $r_1(t)$ and
$r_2(t)$ are in $(R-\delta,R+\delta)$,  then their distance
increases. This contradicts the fact that the support of
$\hat{\mu}_t$ is converging to the point $\{R\}$ as $t \to
\infty$. Let us be more precise.  Since $\mu_T$ is supported in
$(R-\delta,R+\delta)$ and is absolutely continuous with respect to
the Lebesgue measure, there  exists $R_1$ and $R_2$ in
$(R-\delta,R+\delta)$, $R_1 \neq R_2$, such that $\int_{0}^{R_1}
\hat{\mu}_T(x) dx =1/3$ and $\int_{R_2}^\infty \hat{\mu}_T(x) dx
=1/3$.  Consider the ODEs $r_i'(t)= \hat{v}(t,r_i(t))$,
$r_i(T)=R_i$, $i=1,2$. Clearly $r_1(t)$ and $r_2(t)$ remain in
$(R-\delta,R+\delta)$ for all $t\ge T$ (otherwise the support of
$\mu_t$ would not stay in $(R-\delta,R+\delta)$). So
$\abs{r_2(t)-r_1(t)}\ge \abs{R_2-R_1}$ for all $t \ge T$ and  the
support of $\mu_t$ can not converge to the point $\{R\}$, which
contradicts our assumption.

{\it Step 2.} Assume now that ${\mu}_t$ converges weakly toward
$\delta_{R}$  but  does not converge  with respect to the
$d_\infty$-topology.  From the continuity of the function $\eta
\to \omega(r,\eta)$ together with \eqref{pde2r} it is clear that
$\hat{v}(r,t)$  converges pointwise to $\omega(r,R)$ as $t \to
\infty$.  Since the support of $\hat{\mu}_t$ does not converge to
the set $\{R\}$ there is a sequence of times at which there is
always non-zero  mass  in $(0,R-\epsilon) \cup
(R+\epsilon,+\infty)$. Since $\omega(R,R)=0$, the monotonicity
condition \eqref{monotone} implies that $\omega(r,R)<0$ for all $r
\in(R- \epsilon,R)$ and  $\omega(r,R)>0$ for all $r \in(R,R+
\epsilon)$ as long as $\epsilon<\delta$. Because of the pointwise
convergence of  $\hat{v}$ there exists a time $T>0$ such that  for
all $t> T$,  $\hat{v}(t,R-\epsilon)<0$ and
$\hat{v}(t,R+\epsilon)>0$. So after this time $T$ mass cannot
enter the region $[R-\epsilon, R+\epsilon]$. This together with
the existence of a time $t>T$ for which there is some mass in the
complementary of $[R-\epsilon, R+\epsilon]$ contradict the weak
convergence towards $\delta_R$.
\end{proof}

\begin{remark}
In Step {\rm 2} of this proof, since we are dealing with radially
symmetric solutions, the problem is essentially one dimensional
and the characteristics are ordered. This allows us to exclude the
possibility of an $L^p$ solution converging toward a spherical
shell even if this convergence is very weak and the support of the
solution does not converge. In the non radially symmetric case we
will be only able to exclude convergence in $d_\infty$.
\end{remark}

\subsection{The non-radially symmetric case}

In this subsection we consider non-radially symmetric solutions
and we investigate whether it is possible for an $L^p$-solution to
converge toward a steady state supported on an hypersurface which
not necessarily a sphere.  Indeed  in numerical simulations
\cite{KSUB,BUKB}, it is observed that depending on the choice of
the repulsive-attractive potential $W$, solutions of \eqref{pdes1}
can either converge to  steady states which are smooth densities
or to singular steady states which are measures supported on an
hypersurface. We consider steady states $\bar{\mu}$ of the form
 \begin{equation} \label{hyper}
\int_{\real^N} f(x) d\bar{\mu}(x) = \int_{\M} f(x) \phi(x) d \sigma(x) \quad \forall f \in C(\real^N)
\end{equation}
where $\M$ is a compact $C^2$ hypersurface and $d\sigma$ is the
volume element on $\M$. Roughly speaking, we prove that if the the
potential is as singular or more singular than $|x|^{3-N}$ at the
origin, then it is not possible for an $L^p$-solution to  converge
toward such a steady state with respect to the
$d_\infty$-topology.   We also prove that the same result holds if
the  potential is less singular than $|x|^{3-N}$, and if the
divergence of the velocity field generated by such steady state is
strictly positive on its support.

\begin{theorem}[Instability of Spherical Shells: Nonradial case] \label{Theorem:fat-inst2}
Let $W(x)=k(|x|)$ be a radially symmetric potential which belongs
to $C^2(\real^N \backslash \{0\})$.  Assume that $\lim_{r \to 0}
\widehat{\Delta W}(r)=-\infty$ and that close to the origin
$\widehat{\Delta W}(r)$ is monotone. Let $\bar{\mu}$ be a steady
state of the form \eqref{hyper} with $\M$ being a compact $C^2$
hypersurface and let $\bar v$ be the velocity field generated by
$\bar{\mu}$, that is $\bar v= - \nabla W * \bar{\mu}$. If one of
the two condition holds:
\begin{enumerate}
\item[(i)] $\Delta W$
is locally  integrable on hypersurfaces,  $\phi\in L^\infty(\M)$ and
\begin{equation} \label{div2}
(\text{\rm div } \bar v) (x):= - (\Delta W * \bar{\mu})(x) > 0
\qquad \text{for all } x\in \text{supp } \bar{\mu},
\end{equation}
\item[(ii)] $\Delta W$
is not locally integrable on hypersurfaces and $\phi(x) \ge \phi_0>0$ for all $x\in \M$,
\end{enumerate}
then it is not possible for an $L^p$ solution of \eqref{pdes1} to
converge to $\bar{\mu}$ with respect to the $d_\infty$-topology as
$t \to \infty$.
\end{theorem}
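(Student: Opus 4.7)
The plan is to argue by contradiction using the $L^p$-dissipation identity \eqref{lp-equality} from Theorem~\ref{BLR}, namely $\frac{d}{dt}\norm{\mu_t}_{L^p}^p = -(p-1)\int \mu_t^p\,\text{div}\,v_t\,dx$, where $\text{div}\,v_t = -\Delta W\ast\mu_t$. Suppose, for contradiction, that $\mu_t$ converges to $\bar\mu$ in $d_\infty$, and write $d_t := d_\infty(\mu_t,\bar\mu)\to 0$. Since $\text{supp}\,\mu_t$ is contained in the tubular neighborhood $T_{d_t}(\M) := \{x\in\R^N:\,\text{dist}(x,\M)<d_t\}$, whose $N$-dimensional Lebesgue measure is $O(d_t)$ by the compactness and $C^2$-regularity of $\M$, H\"older's inequality applied to $1=\int \mu_t\,dx$ yields
\begin{equation*}
\norm{\mu_t}_{L^p}^p \;\ge\; \abs{T_{d_t}(\M)}^{-(p-1)} \;\ge\; C\,d_t^{-(p-1)} \;\longrightarrow\; +\infty.
\end{equation*}
The key claim is that for every $K>0$ there is $T_K$ such that $\text{div}\,v_t(x)\ge K$ for all $x\in\text{supp}\,\mu_t$ and $t\ge T_K$. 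Granting this, the dissipation identity gives $\frac{d}{dt}\norm{\mu_t}_{L^p}^p \le -(p-1)K\norm{\mu_t}_{L^p}^p$, so $\norm{\mu_t}_{L^p}^p$ stays bounded---contradicting the blow-up above.

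To establish the claim in case (i), note that $\Delta W$ being locally integrable on hypersurfaces together with $\phi\in L^\infty$ implies $\text{div}\,\bar v = -\Delta W\ast\bar\mu$ is continuous on $\R^N$ (by an argument identical in spirit to Lemma~\ref{regsphere}(i)). Since by hypothesis it is strictly positive on the compact set $\M$, it is bounded below by some $2\delta>0$ there and by $\delta$ on a tube $T_\epsilon(\M)$. Fixing a transport map $T_t$ with $T_t\#\bar\mu=\mu_t$ and $\sup_y\abs{T_t(y)-y}\le d_t$, write
\begin{equation*}
\text{div}\,v_t(x) - \text{div}\,\bar v(x) = -\int_\M \bigl[\Delta W(x-T_t(y)) - \Delta W(x-y)\bigr]\phi(y)\,d\sigma(y),
\end{equation*}
and show the difference tends to $0$ uniformly in $x\in T_\epsilon(\M)$ by splitting $\M=\{\abs{y-x}\ge r_0\}\cup\{\abs{y-x}<r_0\}$: the far part uses uniform continuity of $\Delta W$ away from the origin, while the near part uses integrability on hypersurfaces together with the local $C^2$ parametrization of $\M$ (reducing the integral to the canonical one over a hyperplane) to be made uniformly small as $r_0\downarrow 0$. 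Hence $\text{div}\,v_t\ge\delta$ on $\text{supp}\,\mu_t\subset T_\epsilon(\M)$ for $t$ large, proving the claim with $K=\delta$.

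The main technical obstacle is case (ii), where $\text{div}\,\bar v\equiv+\infty$ on $\M$ and no continuous limit is available. Fix $x\in\text{supp}\,\mu_t$ and set $h(r):=-\widehat{\Delta W}(r)$, which is nonnegative, monotone decreasing for small $r$, and satisfies $h(0^+)=+\infty$. Combining the $d_\infty$-closeness with $\phi\ge\phi_0>0$ and the $C^2$ structure of $\M$, one gets the crucial mass lower bound $m_t(r) := \mu_t(B(x,r)) \ge c\phi_0\,(r-2d_t)_+^{N-1}$ for $r$ small. A layer-cake decomposition of the integrand $h(\abs{y-x})$ together with integration by parts (whose boundary term at $r=0$ vanishes thanks to the $2d_t$ shift in the lower bound on $m_t$) then yields
\begin{equation*}
\text{div}\,v_t(x) \;\ge\; \int_0^{r_0} h(r)\,dm_t(r) \;-\; C \;\ge\; c'\phi_0\int_{4d_t}^{r_0} v^{N-2}\,h(v)\,dv \;-\; C'.
\end{equation*}
By the assumed non-integrability of $v^{N-2}h(v)$ near $0$, the right-hand side diverges to $+\infty$ as $d_t\to 0$, uniformly in $x\in\text{supp}\,\mu_t$, establishing the claim for arbitrarily large $K$ and completing the proof.
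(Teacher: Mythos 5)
Your overall architecture is sound, and in two places it genuinely departs from the paper's route. For the contradiction you make $\norm{\mu_t}_{L^p}^p$ blow up quantitatively, via $\norm{\mu_t}_{L^p}^p\ge \abs{T_{d_t}(\M)}^{-(p-1)}\ge C d_t^{-(p-1)}$, and then show the dissipation identity \eqref{lp-equality} forces it to stay bounded; the paper instead keeps $\norm{\mu_t}_{L^p}$ bounded and extracts a weak $L^p$ limit, contradicting $\bar\mu\notin L^p$. Both work; yours is more elementary and explicit. Likewise your case (ii) is a direct argument: the mass lower bound $\mu_t(B(x,r))\ge c\phi_0(r-2d_t)_+^{N-1}$ (which follows from the transport map together with the surface-measure bounds of Lemma~\ref{propmanifolds}) plus a layer-cake/integration-by-parts estimate gives $\mathrm{div}\,v_t\to+\infty$ on $\mathrm{supp}\,\mu_t$ with an explicit rate, whereas the paper truncates the kernel to $\Delta W^\epsilon\ge\Delta W$ and shows $\Delta W^\epsilon*\bar\mu<0$ on $\M$ for $\epsilon$ small before invoking a Hölder-continuity comparison in $d_\infty$. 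Your case (ii) is correct as sketched.

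Case (i), however, contains a step that fails as stated: the claim that $\mathrm{div}\,v_t-\mathrm{div}\,\bar v\to 0$ \emph{uniformly} on a tube around $\M$. In your near-field term you must control $\int_{\M\cap\{|y-x|<r_0\}}\abs{\Delta W(x-T_t(y))}\phi(y)\,d\sigma(y)$, and for $y$ with $\abs{y-x}\le 2d_t$ the point $T_t(y)$ may lie arbitrarily close to $x$; since $\widehat{\Delta W}(r)\to-\infty$ as $r\to 0$ is a standing hypothesis (also in case (i)), this contribution is controlled neither by the hypersurface-integrability of $\Delta W$ nor by $\norm{\phi}_{L^\infty(\M)}$. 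Indeed, a measure obtained from $\bar\mu$ by piling mass of order $d_t^{N-1}$ into a ball of radius much smaller than $d_t$ around $x$ stays within $d_\infty$-distance $\approx d_t$ of $\bar\mu$, lies in $L^p$, and makes $\abs{\mathrm{div}\,v_t(x)-\mathrm{div}\,\bar v(x)}$ as large as one wishes; your argument uses only this static information, so the two-sided uniform convergence cannot be proved. What saves the strategy is that the uncontrolled term has a favorable sign ($\Delta W<0$ near the origin), so the one-sided bound $\mathrm{div}\,v_t(x)\ge \mathrm{div}\,\bar v(x)-o(1)$ on $\mathrm{supp}\,\mu_t$ — which is all you need — does hold. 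To get it, either discard the region $\{\abs{x-T_t(y)}\le\epsilon\}$ using the sign and monotonicity of $\widehat{\Delta W}$ (exactly the mechanism you already exploit in case (ii)), or follow the paper: replace $\Delta W$ by the continuous truncation $\Delta W^\epsilon\ge\Delta W$ so that $-\Delta W*\mu_t\ge-\Delta W^\epsilon*\mu_t$, compare $\Delta W^\epsilon*\mu_t$ with $\Delta W^\epsilon*\bar\mu$ through the Hölder continuity of $\Delta W^\epsilon$ and $d_\infty(\mu_t,\bar\mu)$, and use the uniform convergence $\Delta W^\epsilon*\bar\mu\to\Delta W*\bar\mu$ on $\M$ (the continuity statement you cite should be Lemma~\ref{reg1} of the appendix rather than Lemma~\ref{regsphere}) to retain a strictly positive lower bound. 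With that correction your proof is complete.
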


Before to prove this Theorem, let us make some remarks:

\begin{remark}
According to Lemma {\rm \ref{div-lemma}}, the result of Theorem
{\rm \ref{fat-inst1}} (i) of the previous subsection can be
reformulated as follows: assume that the spherical shell
$\delta_{R}$ is a steady state and let $v_R$ be its velocity
field. If
\begin{equation*} %\label{div-cond}
(\text{\rm div } v_R) (x)= -(\Delta W* \delta_{R})(x)>0 \qquad
\text{for all } x\in \partial B(0,R)
\end{equation*}
then it is not possible for an $L^p$ radially symmetric solution
to converge weakly-$*$ as measures to $\delta_R$ as $t\to \infty$.
So conditions (i) of Theorems {\rm \ref{fat-inst1}} and
{\rm\ref{Theorem:fat-inst2}} are essentially the same. Similarly
condition (ii) of both Theorems are also essentially the same.  In
this sense Theorem {\rm\ref{Theorem:fat-inst2}} can be seen as a
generalization of Theorem {\rm\ref{fat-inst1}} to the non-radially
symmetric case.
\end{remark}

\begin{remark}
The assumption $\lim_{r \to 0} \widehat{\Delta W}(r)=-\infty$
simply guarantees that the potential $W$ is strongly repulsive at
the origin. The monotonicity of  $\widehat{\Delta W}(r)$ in a
neighborhood of the origin is not  essential to the proof and
could be replaced by weaker hypotheses. But in practice all
potentials of interest satisfy this monotonicity condition.
\end{remark}

\begin{remark}
Part (ii) of the Theorem, roughly speaking, states that if the
repulsive-attractive potential  $W$ is more singular than
$|x|^{3-N}$ at the origin, then whatever is its attractive part,
it is not possible for an $L^p$ solution of \eqref{pdes1} to
converge with respect to the $d_\infty$-topology  toward a
singular steady state supported on an hypersurface. So we see that
the dimensionality of stable steady states depends on the degree
of singularity of the potential. For such potential with a strong
repulsive singularity at the origin, steady states are expected to
be absolutely continuous with respect to the Lebesgue measure.
\end{remark}

Theorem \ref{Theorem:fat-inst2} is  a direct consequence of the
three  Lemmas to follow.

\begin{lemma}[Approximating the divergence of the velocity field]
Let $W$ be as  stated in {\rm Theorem~\ref{Theorem:fat-inst2}} and
let $\bar{\mu}$ be a compactly supported probability measure not
belonging to $L^p$. Suppose there exists a H\"older continuous
function $\widetilde{\Delta W} \ge \Delta W$ such that
\begin{equation} \label{popo}
-\widetilde{\Delta W} *\bar{\mu} > 0 \qquad \text{on \rm
supp$(\bar{\mu})$}\,,
\end{equation}
then it is not possible for an $L^p$ solution of \eqref{pdes1} to
converge to $\bar{\mu}$ with respect to the $d_\infty$-topology as
$t \to \infty$.
\end{lemma}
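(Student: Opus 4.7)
The plan is to argue by contradiction: assume that an $L^p$-solution $\mu_t$ converges to $\bar\mu$ in $d_\infty$, and extract from the dissipation identity \eqref{lp-equality} a uniform-in-time bound on $\|\mu_t\|_{L^p}$, which will be incompatible with $\bar\mu \notin L^p$. The key ingredients will be: (a) the inequality $\widetilde{\Delta W}\ge \Delta W$, which converts a lower bound on $-\widetilde{\Delta W}*\mu_t$ into one on $\text{div } v = -\Delta W*\mu_t$; (b) the H\"older continuity of $\widetilde{\Delta W}$, which lets me transfer the strict positivity of $-\widetilde{\Delta W}*\bar\mu$ on $\text{supp}(\bar\mu)$ to positivity of $-\widetilde{\Delta W}*\mu_t$ on $\text{supp}(\mu_t)$; and (c) reflexivity of $L^p$ for $1<p<\infty$, which forces a weak limit of a bounded sequence to itself lie in $L^p$.

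First I would establish the uniform convolution estimate
\[
\sup_{x\in\real^N}\abs{(\widetilde{\Delta W}*\mu_t)(x)-(\widetilde{\Delta W}*\bar\mu)(x)} \le C\, d_\infty(\mu_t,\bar\mu)^{\alpha},
\]
where $\alpha$ is the H\"older exponent of $\widetilde{\Delta W}$. The argument is standard: write $\mu_t=\mathcal T_t\#\bar\mu$ with $\|\mathcal T_t-\mathrm{id}\|_\infty$ essentially equal to $d_\infty(\mu_t,\bar\mu)$, change variables, and apply the pointwise H\"older estimate inside the integral against the probability measure $\bar\mu$.

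Since $-\widetilde{\Delta W}*\bar\mu$ is continuous and strictly positive on the compact set $\text{supp}(\bar\mu)$, it is bounded below by some $2c>0$ on $\text{supp}(\bar\mu)$ and by $c$ on a small open neighborhood of $\text{supp}(\bar\mu)$. Convergence in $d_\infty$ pushes $\text{supp}(\mu_t)$ into this neighborhood for $t$ large, and combined with the uniform estimate above this yields a time $T$ and a constant $c'>0$ with $-\widetilde{\Delta W}*\mu_t\ge c'$ on $\text{supp}(\mu_t)$ for all $t\ge T$. Using $\widetilde{\Delta W}\ge \Delta W$ and $\mu_t\ge 0$,
\[
\text{div }v(t,x) = -(\Delta W*\mu_t)(x) \ge -(\widetilde{\Delta W}*\mu_t)(x) \ge c' \qquad \text{for } x\in \text{supp}(\mu_t),\ t\ge T.
\]

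Plugging this into \eqref{lp-equality} gives $\frac{d}{dt}\|\mu_t\|_{L^p}^p \le -(p-1)c'\|\mu_t\|_{L^p}^p$ for $t\ge T$, so $\|\mu_t\|_{L^p}$ is uniformly bounded (in fact decays exponentially). By reflexivity of $L^p$ I can extract a subsequence $\mu_{t_n}$ converging weakly in $L^p$ to some $f\in L^p$; testing against $C_c^\infty$ and using that $d_\infty$-convergence implies weak-$*$ convergence of measures identifies $f$ with $\bar\mu$, contradicting $\bar\mu\notin L^p$. The main delicate step --- and the reason the H\"older hypothesis is imposed --- will be propagating strict positivity of $-\widetilde{\Delta W}*\bar\mu$ from the fixed support $\text{supp}(\bar\mu)$ to the \emph{moving} support $\text{supp}(\mu_t)$, uniformly in large $t$.
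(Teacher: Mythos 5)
Your proposal is correct and follows essentially the same route as the paper: argue by contradiction, use a transport map together with the H\"older continuity of $\widetilde{\Delta W}$ to bound $\|\widetilde{\Delta W}*(\mu_t-\bar{\mu})\|_{L^\infty}$ by $C\,d_\infty(\mu_t,\bar{\mu})^\alpha$, combine this with the strict positivity of $-\widetilde{\Delta W}*\bar{\mu}$ on a neighborhood of $\text{supp}(\bar{\mu})$ and the inequality $\widetilde{\Delta W}\ge \Delta W$ to get $\text{div}\, v \ge c'>0$ on $\text{supp}(\mu_t)$ for large $t$, and then use \eqref{lp-equality} plus weak $L^p$ compactness to contradict $\bar{\mu}\notin L^p$. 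This is the paper's argument, merely organized slightly differently (you absorb the three-term decomposition of $\Delta W*\mu_t$ into a direct lower bound on $-\widetilde{\Delta W}*\mu_t$), and you in fact spell out the final weak-limit identification more explicitly than the paper does.
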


\begin{proof}
We proceed by contradiction. Let $\mu_t$ be an $L^p$ solution such that
 $\lim_{t \to \infty}d_{\infty} (\mu_t, \bar{\mu})=0$.
We are going to show that there exists a $T>0$ and an $\epsilon>0$ such that for all $t>T$
 \begin{equation}\label{bob}
 (\Delta W * \mu_t)(x) < - \epsilon \qquad \text{for all $x\in \text{supp}( \mu_t)$},
 \end{equation}
 and combined with the equality \eqref{lp-equality}:
 $$
 \frac{d}{dt} \|\mu_t\|_{L^p}^p = (p-1) \int_{\real^N} (\Delta W * \mu_t) \mu_t^p dx,
 $$
this guarantees that a subsequence $\mu_{t_n}$ converges weakly in
$L^p$ to an $L^p$ function, which contradicts the assumption that
$\bar{\mu} \notin L^p$. Let us prove \eqref{bob}. Write
 \begin{equation}\label{kkkk}
 \Delta W * \mu_t=\widetilde{\Delta W} *(\mu_t-\bar{\mu}) + \widetilde{\Delta W} * \bar{\mu} + (\Delta W - \widetilde{\Delta W})* \mu_t
 \end{equation}
and note that since $\widetilde{\Delta W} \ge \Delta W$  the third
term is negative for all $t$ and $x$. Since $\widetilde{\Delta W}$
is continuous so is $\widetilde{\Delta W}* \bar{\mu}$ and,
therefore, \eqref{popo} implies that there exists an $\epsilon>0$
and an open set $\Omega$ containing the support of $\bar{\mu}$
such that $\widetilde{\Delta W}* \bar{\mu}< -\frac\epsilon 2$ on
$\Omega$. Here we used that the ${\rm supp}(\bar{\mu})$ is a
compact manifold. Note  that since  $\lim_{t \to \infty}d_{\infty}
(\mu_t, \bar{\mu})=0$ the support of $\mu_t$ will eventually be in
$\Omega$. To estimate the first term of \eqref{kkkk}, we consider
${\mathcal T}_t: \real^N \to \real^N$  a map pushing forward
$\mu_t$ to $\mu$, i.e. ${\mathcal T}_t \# \mu_t =\bar{\mu}$. Then
  \begin{align*}
   \norm{ \widetilde{\Delta W} *(\mu_t-\bar{\mu}) (x) }_{L^\infty(\real^N)} & \le \int_{\real^N} |\widetilde{\Delta W}(x-y)- \widetilde{\Delta W} (x- {\mathcal T}_t(y))| d \mu_t(y) \\
   & \le  \int_{\real^N}c  |y- {\mathcal T}_t(y))|^\beta d \mu_t(y),
  \end{align*}
since this inequality is true for any map ${\mathcal T}_t$ pushing
forward $\mu_t$ to $\mu$,
\begin{equation*}
 \| \widetilde{\Delta W} *(\mu_t-\bar{\mu}) \|_{L^\infty(\real^N)} \le c \;d_\infty(\mu_t,\bar{\mu})^\beta,
\end{equation*}
so that for $t\geq T$ with $T$ large enough,
$$
 \| \widetilde{\Delta W} *(\mu_t-\bar{\mu}) \|_{L^\infty(\real^N)} \le \frac \epsilon 4.
$$
Since $\widetilde{\Delta W} \ge \Delta W$, the last term of \eqref{kkkk} is negative, so that $\Delta W * \mu_t<0$.
\end{proof}

In order to conclude the proof we now need to show that under the
hypotheses of the theorem  there exists a H\"older continuous
function $\widetilde{\Delta W} \ge \Delta W$ satisfying
\eqref{popo}. Define
$$
\Delta W^\epsilon(x) := \begin{cases}
\Delta W (x) & \text{ if }|x| \ge \epsilon \\
\Delta W (\epsilon e_1) & \text{ if }|x| < \epsilon \\
\end{cases}.
$$
The function $\Delta W^\epsilon(x)$ is obviously H\"older
continuous and, due to the monotonicity of $\Delta W$ around the
origin we have $\Delta W^\epsilon\ge \Delta W$ for $\epsilon$
small enough. We are left to show that $\Delta W^\epsilon*
\bar{\mu}<0$ on the support of $\bar{\mu}$  for $\epsilon$ small
enough and this is done in the following two Lemmas.

\begin{lemma}[Continuity in $\epsilon$ of the divergence of the velocity field]
Let $\bar{\mu}$, $\M$ and $W$ be as  stated in {\rm
Theorem~\ref{Theorem:fat-inst2}} (i). Then $\Delta W^\epsilon
*\bar{\mu}$ converges uniformly on $\M$ toward $\Delta W
*\bar{\mu}$. Therefore  there exists $\epsilon>0$ such that
$\Delta W^\epsilon* \bar{\mu}<0$ on the support of $\bar{\mu}$.
\end{lemma}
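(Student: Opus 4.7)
The plan is to leverage the $C^2$ compactness of $\M$ to turn the scalar integrability hypothesis on $\widehat{\Delta W}$ into a uniform estimate over $x \in \M$. Since $\Delta W$ and $\Delta W^\epsilon$ agree off $B(0,\epsilon)$, I would first write, for $x \in \M$,
\begin{equation*}
(\Delta W *\bar{\mu})(x) - (\Delta W^\epsilon *\bar{\mu})(x) = \int_{\M \cap B(x,\epsilon)}\bigl[\Delta W(x-y) - \widehat{\Delta W}(\epsilon)\bigr]\phi(y)\,d\sigma(y),
\end{equation*}
so that $\|\phi\|_{L^\infty(\M)} < \infty$ reduces the problem to bounding, uniformly in $x$, the two quantities
\begin{equation*}
I_1(x,\epsilon) := \int_{\M \cap B(x,\epsilon)} |\widehat{\Delta W}(|x-y|)|\,d\sigma(y) \qquad \text{and} \qquad I_2(x,\epsilon) := |\widehat{\Delta W}(\epsilon)|\,\sigma(\M \cap B(x,\epsilon)).
\end{equation*}

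For $I_1$, the idea is to parametrize $\M \cap B(x,\epsilon)$ as a graph over the tangent hyperplane $T_x \M$: by $C^2$ compactness there is a uniform radius $\epsilon_0 > 0$ and a uniform Jacobian bound for this parametrization, so that after a change of variables $I_1(x,\epsilon) \le C \int_0^\epsilon |\widehat{\Delta W}(r)|\,r^{N-2}\,dr$ with $C$ independent of $x$. The right-hand side tends to $0$ by the local integrability of $\Delta W$ on hypersurfaces. For $I_2$ the same parametrization gives $\sigma(\M \cap B(x,\epsilon)) \le C \epsilon^{N-1}$, and the required $\epsilon^{N-1}|\widehat{\Delta W}(\epsilon)| \to 0$ follows from the near-origin monotonicity of $\widehat{\Delta W}$ assumed in Theorem \ref{Theorem:fat-inst2} combined with the integrability of $\widehat{\Delta W}(r)r^{N-2}$ on $(0,1)$: for instance in the decreasing case one has $|\widehat{\Delta W}(2\epsilon)|\int_\epsilon^{2\epsilon} r^{N-2}\,dr \le \int_0^{2\epsilon} |\widehat{\Delta W}(r)|\,r^{N-2}\,dr \to 0$. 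Combining these two bounds yields the uniform convergence of $\Delta W^\epsilon *\bar{\mu}$ to $\Delta W *\bar{\mu}$ on $\M$.

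For the second assertion, each $\Delta W^\epsilon$ is continuous on $\R^N$, so $\Delta W^\epsilon *\bar{\mu}$ is continuous on $\M$; uniform convergence then makes $\Delta W *\bar{\mu}$ continuous on the compact set $\M = \text{supp}(\bar{\mu})$. By the hypothesis of Theorem \ref{Theorem:fat-inst2}(i), $\Delta W *\bar{\mu} < 0$ on $\M$, so compactness gives a uniform bound $\Delta W *\bar{\mu} \le -\gamma < 0$ on $\M$, and uniform convergence finally yields $\Delta W^\epsilon *\bar{\mu} < -\gamma/2 < 0$ on $\M$ for all $\epsilon$ sufficiently small. The main obstacle is obtaining the uniformity in $x$ of $I_1$ and $I_2$: the one-variable integrability hypothesis makes the pointwise vanishing almost trivial, but translating it into a surface estimate uniform across the base point $x$ crucially relies on the $C^2$ compactness of $\M$ providing tangent-plane graph parametrizations with geometric constants bounded independently of $x$.
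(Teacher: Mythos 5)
Your proof is correct and takes essentially the same route as the paper: both reduce the uniform estimate on $\M$ to the one-dimensional integral $\int_0^\epsilon|\widehat{\Delta W}(r)|\,r^{N-2}\,dr\to 0$ via surface-measure bounds for $\M\cap B(x,\epsilon)$ with constants uniform in $x$ coming from $C^2$ compactness (this is exactly the content of Lemma~\ref{propmanifolds} in the appendix, which your tangent-plane graph parametrization re-derives), and then conclude by continuity of $\Delta W^\epsilon*\bar\mu$ plus compactness of $\M$. The only difference is minor: you treat the constant term $|\widehat{\Delta W}(\epsilon)|\,\sigma(\M\cap B(x,\epsilon))$ separately via the doubling argument giving $\epsilon^{N-1}|\widehat{\Delta W}(\epsilon)|\to 0$, while the paper disposes of it in one line by noting $\widehat{\Delta W}(\epsilon)\le 0$ near the origin, so that $\widehat{\Delta W}(\epsilon)-\widehat{\Delta W}(r)\le -\widehat{\Delta W}(r)$ inside the integral.
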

\begin{proof}
Since $\M$ is $C^2$ and compact, using Lemma \ref{propmanifolds} from the appendix,
there exist constants $\delta,C_1,C_2>0$ so that
$$
C_1 \int_0^\epsilon g(r) r^{N-2} dr \le \int_{\M \cap B(x,\epsilon)} g(\abs{x-y}) d \sigma(y) \le C_2 \int_0^\epsilon g(r) r^{N-2} dr
$$
for all $x\in \M$, for all $\epsilon<\delta$ and for all nonnegative function
$g$ locally integrable on hypersurfaces. Since $ \Delta W(x)$ is radial and goes
to $-\infty$ monotonically  as $\abs{x} \to 0^+$, we clearly have
that $\Delta W(\epsilon e_1) - \Delta W(x-y) \ge 0$ for all $y \in
B(x,\epsilon)$ if $\epsilon$ is small enough.  Then we obtain
that, for all $x\in \M$,
\begin{align*}
\abs{(\Delta W^\epsilon *\bar{\mu})(x)-(\Delta W*\bar{\mu})(x)} &= \int_{\M \cap  B(x,\epsilon)} \Big[\Delta W(\epsilon e_1) - \Delta W(x-y) \Big] \phi(y) d\sigma(y)\\
& \le  C_2  \norm{\phi}_{L^\infty(\M)}  \int_{0}^{\epsilon} \Big[\widehat{\Delta W}(\epsilon) - \widehat{\Delta W}(r)\Big]  r^{N-2}dr \\
& \le  C_2  \norm{\phi}_{L^\infty(\M)}  \int_{0}^{\epsilon} \Big[-
\widehat{\Delta W}(r)\Big]  r^{N-2}dr
\end{align*}
and we conclude using the fact that $\Delta W$ is integrable on
hypersurfaces.
\end{proof}

\begin{lemma}
Let $\bar{\mu}$, $\M$ and $W$ be as  stated in {\rm Theorem~\ref{Theorem:fat-inst2}} (ii). Then there exists $\epsilon>0$ such that
$\Delta W^\epsilon*
\bar{\mu}<0$ on the support of $\bar{\mu}$.
\end{lemma}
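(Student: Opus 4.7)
The plan is to prove that $(\Delta W^\epsilon * \bar\mu)(x) \to -\infty$ as $\epsilon \to 0^+$, uniformly in $x \in \M$; choosing $\epsilon$ small enough then yields the conclusion. Fix a small $\delta > 0$, independent of $\epsilon$, such that $\widehat{\Delta W}$ is monotone on $(0,\delta]$ and $\widehat{\Delta W}(r) \le 0$ there (both possible since $\widehat{\Delta W}$ is monotone near $0$ and tends to $-\infty$), and such that Lemma \ref{propmanifolds} applies on $\M$ with radius $\delta$. Then decompose, for $0<\epsilon<\delta$ and $x \in \M$,
\[
(\Delta W^\epsilon * \bar\mu)(x) = T_1(x,\epsilon) + T_2(x,\epsilon) + T_3(x,\epsilon),
\]
where $T_1$, $T_2$, $T_3$ are the integrals of $\Delta W^\epsilon(x-y)\phi(y)$ over $\M \cap B(x,\epsilon)$, $\M \cap (B(x,\delta) \setminus B(x,\epsilon))$, and $\M \setminus B(x,\delta)$ respectively.

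The far-field term $T_3$ is uniformly bounded: since $W \in C^2(\R^N \setminus \{0\})$ and $\M$ is compact, $\widehat{\Delta W}$ is bounded on $[\delta,\mathrm{diam}(\M)]$, and $\bar\mu$ is a probability measure. The near-field term $T_1 = \widehat{\Delta W}(\epsilon)\int_{\M \cap B(x,\epsilon)} \phi\, d\sigma$ is nonpositive for small $\epsilon$ (since $\widehat{\Delta W}(\epsilon) < 0$ and $\phi \ge 0$), and may simply be dropped from any upper bound. The intermediate term $T_2$ is where the non-integrability on hypersurfaces is exploited: using $\phi \ge \phi_0 > 0$ and $\widehat{\Delta W}(|x-y|) \le 0$ on the region,
\[
T_2(x,\epsilon) \le -\phi_0 \int_{\M \cap B(x,\delta)} |\widehat{\Delta W}(|x-y|)|\,\mathbf 1_{|x-y|>\epsilon}\, d\sigma(y).
\]
Applying Lemma \ref{propmanifolds} to the nonnegative radial weight $r \mapsto |\widehat{\Delta W}(r)|\mathbf 1_{r>\epsilon}$ (which is integrable against $r^{N-2}$ near $0$ thanks to the cutoff) bounds the right-hand integral below by $C_1 \int_\epsilon^\delta |\widehat{\Delta W}(r)| r^{N-2}\, dr$, and this diverges to $+\infty$ as $\epsilon \to 0^+$ because $\Delta W$ is not locally integrable on hypersurfaces. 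Hence $T_2(x,\epsilon) \to -\infty$ uniformly in $x \in \M$, and combining the three estimates gives the required divergence.

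The main obstacle is recognizing that the blow-up does \emph{not} come from the near-field term $T_1$: its size is at most $|\widehat{\Delta W}(\epsilon)|\cdot |\M \cap B(x,\epsilon)|_\sigma \sim |\widehat{\Delta W}(\epsilon)|\,\epsilon^{N-1}$, whereas non-integrability on hypersurfaces is strictly weaker, asserting only $\int_0^\delta |\widehat{\Delta W}(r)| r^{N-2}\, dr = +\infty$ (consider for instance $|\widehat{\Delta W}(r)| \sim r^{1-N}|\log r|^{-1}$). The correct mechanism is therefore the cumulative contribution of the annular region $\{|x-y|\in(\epsilon,\delta)\}$, calibrated exactly to match the integrability notion in the definition. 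The only technical subtlety is that the cut-off radial weight must be fed into Lemma \ref{propmanifolds} as a single function on $\M \cap B(x,\delta)$ rather than as a difference of two integrals on nested balls, both to keep integrands integrable and to ensure the estimate is uniform in $x$.
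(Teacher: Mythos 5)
Your proposal is correct and follows essentially the same route as the paper: drop the near-field contribution of $\Delta W^\epsilon$ (nonpositive since $\widehat{\Delta W}(\epsilon)\le 0$), bound the far-field term uniformly using compactness of $\M$, and show the annular term over $\epsilon\le|x-y|\le r_0$ diverges to $-\infty$ via the lower bound of Lemma \ref{propmanifolds}, the assumption $\phi\ge\phi_0>0$, and the non-integrability of $\Delta W$ on hypersurfaces. Your closing remark correctly identifies the mechanism the paper uses implicitly; there is no substantive difference in the argument.
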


\begin{proof}
Choose $r_0$ as in Lemma \ref{propmanifolds} and also small enough
so that $\widehat{\Delta W}(r) \le 0$ for all $r \le r_0$. For
$\epsilon < r_0$ we then have
\begin{equation}\label{deltaW}
(\Delta W^\epsilon*\bar\mu)(x) \le \int_{\epsilon \le |x-y| \le r_0} \Delta W (x-y) d\bar \mu(y)+ \int_{ |x-y| > r_0} \Delta W (x-y) d\bar \mu(y).
\end{equation}
Since $\Delta W$ is bounded on $B(0,2\,\textrm{diam}(\M))\setminus
B(0,r_0)$, the second term is uniformly bounded for $x\in \M$. We
use  Lemma \ref{propmanifolds} to estimate the first term of
\eqref{deltaW}:
\begin{multline*}
\int_{\epsilon \le |x-y| \le r_0}  \Delta W (x-y) d\bar \mu(y) = \int_{\mathcal M \cap \{y: \epsilon \le |x-y| \le r_0\}}   \widehat{\Delta W} (|x-y|) \phi(y) d\sigma(y) \\
 \leq \phi_0  \int_\epsilon^{r_0}  \widehat{\Delta W} (r) \abs{\mathcal M \cap \partial B(x,r) }_{{\mathcal H}^{N-2}} dr
   \leq  \tilde C \phi_0  \int_\epsilon^{r_0}  \widehat{\Delta W} (r)r^{N-2} dr,
\end{multline*}
and since $\Delta W$ is not locally integrable on hypersurface and
$ \widehat{\Delta W}<0$ on $[0,r_0]$, the last integral goes to
$-\infty$ as $\epsilon \to 0$. Then, for $\varepsilon>0$ small
enough, $\Delta W^\epsilon*\bar\mu<0$ on $\mbox{supp }(\bar\mu)$.
\end{proof}

%%%%%%%%%%%%%%%%%%%%%%%%%%%%%%%%%%%%%%%%%%%%%%%%%%%%%%%%%%%%%%%%%%%%%%%%%%%%%%%%%%%%%%%%%%%%%%%%

\section{Stability for Radial Perturbations}
\label{sec:4} In this section, we give sufficient conditions for
the stability under radial perturbations of $\delta_R$ stationary
solutions in transport distances for the system
\eqref{pde1r}-\eqref{pde2r}. Let us denote by $\PP_2^r(\R^N)$ the
set of radial probability measures with bounded  second  moment.

Here, we will work with radial solutions with the following hypotheses of
minimal regularity {\bf (HMR)}: we assume that for any given
$\mu_0 \in \PP_2^r(\R^N)$, there exists $\mu\in
AC([0,T],\PP_2^r(\R^N))$, with $\mu_t=\mu_0$ for $t=0$, such that
$$
\hat{v}(t,r)= \int_0^{+\infty} \omega(r,\eta) d\hat{\mu}_t(\eta)
\in L^2((0,T)\times\R^N))
$$
for all $T>0$ and their corresponding radial measures
$\hat{\mu}_t$ satisfy \eqref{pde1r} in the weak distributional
sense. Moreover, they satisfy that $\int_0^\infty
r\,d\hat\mu_t(r)$ is an absolutely continuous function in time for
which
\begin{equation}
\frac{d}{dt} \int_0^\infty r\,d\hat\mu_t(r) = \int_0^\infty
\hat{v}(t,r)\,d\hat\mu_t(r) \label{newref1}
\end{equation}
holds a.e. $t\geq 0$. Furthermore, if $\hat{\mu}_0$ is compactly
supported, we assume that
$$
r_1(t)=\min\lbrace\textrm{supp }(\hat\mu_t)\rbrace \quad \mbox{and} \quad
r_2(t)=\max \lbrace\textrm{supp }(\hat\mu_t)\rbrace\, ,
$$
are absolutely continuous functions with $\frac d{dt}r_i(t) =
\hat{v}(t,r_i(t))$ a.e. $t\geq 0$, $i=1,2$.

The existence theory developed in Section~\ref{sec:5} ensures that
smooth classical solutions satisfying {\bf (HMR)} exist for $\mu_0
\in \PP_2^r(\R^N)\cap {\mathcal W}^{2,\infty}(\R^N)$ initial data
under suitable assumptions on the potential. Therefore, we assume
in this section that our radial solutions satisfy
\eqref{pde1r}-\eqref{pde2r} with $\omega$ given by
\eqref{omega-def2} verifying suitable hypotheses specified in each
result.

\begin{theorem}[Stability for local perturbations] \label{stability}
Assume $\omega\in C^1(\mathbb R_+^2)$ as given in
\eqref{omega-def2} and that $\delta_R$ is a stationary solution to
\eqref{pde1r}-\eqref{pde2r}, that is, the condition {\rm\bf (C0)}:
$\omega(R,R)=0$. Let us assume that {\rm\bf (C1)} and {\rm\bf
(C2)} are satisfied with strict inequality, that is:
\begin{equation}\label{stabvelocity}
\partial_1\omega(R,R)<0 \qquad \mbox{ and }\qquad
\partial_1\omega(R,R)+\partial_2\omega(R,R)<0\,.
\end{equation}
Then there exists $\varepsilon_0>0$ such that if the initial data
$\mu_0\in\PP_2^r(\R^N)$ satisfies ${\rm supp }(\hat\mu_0) \subset
[R-\varepsilon_0,R+\varepsilon_0]$, and for any solution to
\eqref{pde1r}-\eqref{pde2r} with initial data satisfying {\rm\bf
(HMR)} we get
\begin{equation*}
d_2(\hat\mu_t,\delta_R)\leq Ce^{-\gamma t},
\end{equation*}
for any $0<\gamma<-\max\left(\partial_1\omega(R,R),\,\frac
d{dR}\omega(R,R) \right)$ for suitable $C$.
\end{theorem}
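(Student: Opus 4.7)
The plan is to work entirely in the one-dimensional radial formulation \eqref{pde1r}--\eqref{pde2r} and to control two scalar quantities: the support diameter $s(t):=r_2(t)-r_1(t)$ and the displacement of the first moment $M(t):=\int_0^\infty r\,d\hat\mu_t(r)-R$. Under assumption \textbf{(HMR)}, both are absolutely continuous, and using $\omega(R,R)=0$,
\begin{equation*}
\dot s(t)=\int_0^\infty \bigl[\omega(r_2(t),\eta)-\omega(r_1(t),\eta)\bigr]\,d\hat\mu_t(\eta),\qquad
\dot M(t)=\iint \omega(r,\eta)\,d\hat\mu_t(r)\,d\hat\mu_t(\eta).
\end{equation*}
The strategy is to show that as long as $\mathrm{supp}(\hat\mu_t)$ stays in a small neighborhood $[R-\eta_0,R+\eta_0]$ of $R$, then $s$ decays at a rate close to $-\partial_1\omega(R,R)$ and $M$ at a rate close to $-\tfrac{d}{dR}\omega(R,R)=-(\partial_1\omega(R,R)+\partial_2\omega(R,R))$, and then to close the argument by a bootstrap ensuring the support remains in such a neighborhood.

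For the diameter, apply the mean value theorem to $r\mapsto\omega(r,\eta)$: by continuity of $\partial_1\omega$ on $\real^2_+$ together with $\partial_1\omega(R,R)<0$, for any prescribed $\gamma_1\in(0,-\partial_1\omega(R,R))$ there exists $\eta_0>0$ such that $\partial_1\omega(\xi,\eta)\le -\gamma_1$ on $[R-\eta_0,R+\eta_0]^2$, so that $\dot s(t)\le -\gamma_1 s(t)$ while the support stays in $[R-\eta_0,R+\eta_0]$. For the first-moment displacement, a first-order Taylor expansion at $(R,R)$,
\begin{equation*}
\omega(r,\eta)=\partial_1\omega(R,R)(r-R)+\partial_2\omega(R,R)(\eta-R)+\mathcal R(r,\eta),
\end{equation*}
gives, by $C^1$-smoothness of $\omega$, a remainder bound $|\mathcal R(r,\eta)|\le \varepsilon_1(|r-R|+|\eta-R|)$ with $\varepsilon_1\to 0$ as $\eta_0\to 0$. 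Integrating against $d\hat\mu_t(r)\,d\hat\mu_t(\eta)$, using $\int(r-R)\,d\hat\mu_t(r)=M(t)$ and the elementary bound $\int|r-R|\,d\hat\mu_t(r)\le s(t)+|M(t)|$ valid for a measure supported in $[r_1(t),r_2(t)]$, yields
\begin{equation*}
\dot M(t)=-\alpha_2\,M(t)+E(t),\qquad |E(t)|\le C\varepsilon_1\bigl[|M(t)|+s(t)\bigr],\qquad \alpha_2:=-\tfrac{d}{dR}\omega(R,R)>0.
\end{equation*}

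Combine the two scalar estimates in the Lyapunov functional $L(t):=M(t)^2+\kappa\,s(t)^2$. With $\kappa>0$ chosen so that the cross term $M(t)E(t)$, bounded via AM--GM by combinations of $\varepsilon_1 M^2$ and $\varepsilon_1 s^2$, is absorbed into the leading damping, one obtains $\dot L(t)\le -2\gamma L(t)$ for any prescribed $\gamma<\min(\gamma_1,\alpha_2)$ provided $\eta_0$ (and hence $\varepsilon_1$) is small enough; Gronwall then gives $L(t)\le L(0)e^{-2\gamma t}$. The bootstrap is closed by $|r_i(t)-R|\le s(t)+|M(t)|\le C\sqrt{L(t)}\le C\sqrt{L(0)}$: if the initial $\varepsilon_0$ is chosen so small that $C\sqrt{L(0)}<\eta_0$, then the support cannot first exit $[R-\eta_0,R+\eta_0]$, and the decay estimates hold for all $t\ge 0$. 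Finally, since $d_2^2(\hat\mu_t,\delta_R)=\int_0^\infty(r-R)^2\,d\hat\mu_t(r)\le (s(t)+|M(t)|)^2\le C L(t)$, the announced bound $d_2(\hat\mu_t,\delta_R)\le Ce^{-\gamma t}$ follows for any $\gamma$ strictly below $\min(-\partial_1\omega(R,R),-\tfrac{d}{dR}\omega(R,R))$. The main obstacle is the bookkeeping needed to keep the quadratic remainder from degrading the sharp decay rate: the weight $\kappa$, the smallness of $\eta_0$, and the continuity/bootstrap argument for support invariance must be balanced consistently.
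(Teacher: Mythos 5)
Your proposal follows essentially the same route as the paper's proof: you track exactly the same two quantities (the support diameter $r_2-r_1$ and the shift of the first moment), expand $\omega$ around $(R,R)$ using the strict conditions {\bf (C1)}--{\bf (C2)}, close a bootstrap keeping the support near $R$, apply Gronwall, and bound $d_2$ by the same elementary estimate $\int (r-R)^2\,d\hat\mu_t \le (s+|M|)^2$. The only differences are cosmetic packaging --- you use a mean-value bound $\partial_1\omega\le-\gamma_1$ for the diameter and a quadratic Lyapunov functional $M^2+\kappa s^2$ instead of the paper's $\Gamma+|\Theta|$ with $o(\Gamma+|\Theta|)$ remainders --- and both yield the same decay rate, so the argument is correct.
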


\begin{proof}[Proof of the Theorem]
Since we have assumed that the solutions to
\eqref{pde1r}-\eqref{pde2r} satisfy the regularity conditions {\bf
(HMR)}, then $\Gamma(t):=\textrm{diam
}(\textrm{supp}(\hat\mu_t))=r_2(t)-r_1(t)$, and
$$
\Theta(t):=\int_0^\infty r\,d\hat\mu_t(r)-R \, ,
$$
are absolutely continuous function of $t\geq 0$. We will proceed
by contradiction.

We define $T:=\min\{t\geq 0;\, \Gamma(t)+|\Theta(t)|\geq
4\varepsilon_0\}$ and let us assume that $T<\infty$ for all
$\varepsilon_0>0$ close to 0. Note that $T>0$ by continuity of
$\Gamma(t)+|\Theta(t)|$, since $\textrm{supp }(\hat\mu_0) \subset
[R-\varepsilon_0,R+\varepsilon_0]$ implies that
\begin{equation*}
\Gamma(0)+|\Theta(0)|\leq
2\varepsilon_0+\int_{R-\varepsilon_0}^{R+\varepsilon_0}|r-R|
\,d\hat\mu_0(r)\leq 3\varepsilon_0 \, .
\end{equation*}
Now, for $t\in [0,T]$, $\textrm{supp}(\hat\mu_t)\subset
[R-4\varepsilon_0,R+4\varepsilon_0]$, since
\begin{align}
|r_i(t)-R| &\leq |r_i(t)-(R+\Theta(t))|+|\Theta(t)|\leq
(r_2(t)-r_1(t))+|\Theta(t)|\nonumber\\
&=\Gamma(t)+|\Theta(t)|\leq 4\varepsilon_0 \, ,\label{riR}
\end{align}
using that the center of mass $\Theta(t)+R$ is obviously in
$[r_1(t),r_2(t)]$, for all $t\geq 0$ and the definition of $T$.

Then, for $t\in [0,T]$, Taylor expanding to order one and using
that $\partial_1\omega$ is uniformly continuous on
$[R-4\varepsilon_0,R+4\varepsilon_0]^2$ together with {\bf (HMR)},
we get
\begin{align*}
\frac d{dt}\Gamma(t)&=\frac d{dt}r_2(t)-\frac d{dt}r_1(t) = \hat{v}(t,r_2(t))-\hat{v}(t,r_1(t))=\int_0^\infty \left[\omega(r_2(t),\eta)-\omega(r_1(t),\eta)\right]\,d\hat\mu_t(\eta)\\
&=\int_0^\infty
\left[\partial_1\omega(r_1(t),\eta)(r_2(t)-r_1(t))+g(r_1(t),r_2(t),\eta)\right]\,d\hat\mu_t(\eta),
\end{align*}
where $g$ satisfies
\begin{equation}\label{o}
\lim_{|r_2- r_1|\to 0}\left(\sup_{\eta\in [r_1,r_2]}\frac
{|g(r_1,r_2,\eta)|}{|r_2-r_1|}\right)=0.
\end{equation}
Since \eqref{o} is satisfied, the integral of $g$ can be estimated
as follows
\begin{equation*}
\int_0^\infty
g(r_1(t),r_2(t),\eta)\,d\hat\mu_t(\eta)=\int_{r_1(t)}^{r_2(t)}
g(r_1(t),r_2(t),\eta)\,d\hat\mu_t(\eta)=o(r_2(t)-r_1(t))=o(\Gamma(t)).
\end{equation*}
Proceeding with the same argument as before using \eqref{o}, we
can estimate
\begin{align*}
\frac d{dt}\Gamma(t)&=(r_2(t)-r_1(t))\int_0^\infty\partial_1\omega(r_1(t),\eta)\,d\hat\mu_t(\eta)+o(\Gamma)\\
&=(r_2(t)-r_1(t))\int_0^\infty\left[\partial_1\omega(R,R)+\big(\partial_1\omega(r_1(t),\eta)-\partial_1\omega(R,R)\big)\right]\,d\hat\mu_t(\eta)+o(\Gamma).
\end{align*}
Since $\eta\in \textrm{supp}(\hat\mu_t)=[r_1(t),r_2(t)]\subset
[R-4\varepsilon_0,R+4\varepsilon_0]$ thanks to \eqref{riR}, we can
then use the uniform continuity of $\partial_1 \omega$ on
$[R-4\varepsilon_0,R+4\varepsilon_0]^2$ to get:
\begin{equation*}
\abs{\partial_1\omega(r_1(t),\eta)-\partial_1\omega(R,R)}\le C{|r_1(t)-R|+|\eta-R|}\le C{|r_1(t)-R|+|r_2(t)-R|},
\end{equation*}
for any $\eta\in \textrm{supp}(\hat\mu_t)$. We can then use
\eqref{riR} again giving
\begin{align*}
\frac
d{dt}\Gamma(t)&=\partial_1\omega(R,R)(r_2(t)-r_1(t))+o(\Gamma)+o(|\Theta|).
\end{align*}

On the other hand, we can also estimate using \eqref{newref1}
\begin{align*}
\frac d{dt}\Theta(t)=&\,\int_0^\infty \hat{v}(r,t)\,d\hat\mu_t(r)=\int_0^\infty\!\!\int_0^\infty \omega(r,\eta)\,d\hat\mu_t(r)\,d\hat\mu_t(\eta)\\
=&\,\int_0^\infty\!\!\int_0^\infty
\left[\omega(\eta,\eta)+\partial_1\omega(\eta,\eta)(r-\eta)\right]
\,d\hat\mu_t(r)\,d\hat\mu_t(\eta)+o(\Gamma)+o(|\Theta|),
\end{align*}
where we have again used an argument as in \eqref{o} to estimate
the rest term of the Taylor expansion, and we use it once again to
obtain
\begin{align*}
\frac d{dt}\Theta(t)=&\,\int_0^\infty\!\!\int_0^\infty
\left[\omega(R,R)+\frac d{dR}\omega(R,R)(\eta-R)
+\partial_1\omega(\eta,\eta)(r-\eta)\right]\,d\hat\mu_t(r)\,d\hat\mu_t(\eta)\\
&+o(\Gamma)+o(|\Theta|)\\
=&\,\frac
d{dR}\omega(R,R)\left(\int_0^\infty\eta\,d\hat\mu_t(\eta)-R\right)+\partial_1\omega(R,R)\left(\int_0^\infty
r\,d\hat\mu_t(r)
-\int_0^\infty\eta\,d\hat\mu_t(\eta)\right)\\
&+o(\Gamma)+o(|\Theta|)\\
=&\,\left(\frac
d{dR}\omega(R,R)\right)\Theta+o(\Gamma)+o(|\Theta|).
\end{align*}

We now combine the estimates on $\Gamma$ and $\Theta$ to get:
\begin{equation}\label{gammatheta}
\frac
d{dt}\left(\Gamma+|\Theta|\right)(t)\leq\max\left(\partial_1\omega(R,R),\,\frac
d{dR}\omega(R,R)
\right)\left(\Gamma+|\Theta|\right)(t)+o\left(\Gamma+|\Theta|\right).
\end{equation}
Let us point out that all the
$o\left(\Gamma+|\Theta|\right)$-terms can be made uniformly small
in the interval $[0,T]$ by taking $\varepsilon_0$ small by their
definitions and using that $\textrm{supp}(\hat\mu_t)\subset
[R-4\varepsilon_0,R+4\varepsilon_0]$ in $[0,T]$. More precisely,
let $\gamma \in (0, -\max\left(\partial_1\omega(R,R),\,\frac
d{dR}\omega(R,R) \right))$. We can choose $\varepsilon_0>0$ small
enough for the rest terms of \eqref{gammatheta} to satisfy:
\begin{equation}\label{assrest}
\frac{o\left(\Gamma(t)+|\Theta(t)|\right)}{\Gamma(t)+|\Theta(t)|}\leq
\left|\max\left(\partial_1\omega(R,R),\,\frac d{dR}\omega(R,R)
\right)\right|-\gamma,
\end{equation}
for any $\Gamma(t),\Theta(t)$ since $\Gamma(t)+|\Theta(t)|\leq
4\varepsilon_0$ for all $t\in[0,T]$ due to \eqref{riR}. Then
\eqref{assrest} is satisfied for all $t\in [0,T]$, and thus,
\begin{equation*}
\frac d{dt}\left(\Gamma+|\Theta|\right)(t)\leq -\gamma
\left(\Gamma+|\Theta|\right)(t),
\end{equation*}
so that for $t\in [0,T]$,
\begin{equation}\label{gammatheta2}
\left(\Gamma+|\Theta|\right)(t)\leq\left(\Gamma+|\Theta|\right)(0)
e^{-\gamma\,t}\, .
\end{equation}
In particular, for any time $t\in [0,T]$,
$\left(\Gamma+|\Theta|\right)(t)\leq\left(\Gamma+|\Theta|\right)(0)\leq
3\varepsilon_0$ and thus, using the continuity of
$\left(\Gamma+|\Theta|\right)(t)$ since $T<+\infty$ we can
continue up to $\tilde{T}>T$ satisfying
$\left(\Gamma+|\Theta|\right)(t)\leq 4\varepsilon_0$ contradicting
the definition of $T$. Thus, $T=\infty$ for small enough
$\varepsilon_0$ and \eqref{gammatheta2} then holds for all $t\geq
0$. Thanks to \eqref{riR}, this implies the exponential
convergence of $d_2(\hat\mu_t,\delta_R)$ to $0$:
\begin{equation*}
d_2(\hat\mu_t,\delta_R)^2= \int_{0}^{\infty}(r-R)^2d\hat{\mu}_t(r)
\leq \max\left(|r_1(t)-R|^2,|r_2(t)-R|^2\right)\leq
\left(\Gamma+|\Theta|\right)^2(t)\leq 3\varepsilon_0
e^{-\gamma\,t}
\end{equation*}
for all $t\geq 0$.
\end{proof}

\begin{remark}
Lemma {\rm \ref{regsphere}} gives sufficient conditions to get the
assume regularity $\omega \in C^1(\real^2_+)$. Previous Theorem
holds for all radially symmetric potentials $W(x)=k(|x|)$
belonging to $C^3(\real^N \backslash \{0\})$ such that $k''(r)$
and $r^{-1}k'(r)$ are integrable on hypersurfaces. This applies
also to the next result for non local perturbations.
\end{remark}

\begin{remark}
The first part of condition \eqref{stabvelocity} implies intuitively that the
velocity field created by $\delta_R$ given by
$\omega(r,R)$ is decreasing at $r=R$ and therefore, particles are
pushed locally in space and in time towards radius $R$
for small perturbations.
\end{remark}

From now on, we denote by $\varphi(t,\cdot)$ the pseudo-inverse of
the distribution function of the radial measure $\hat \mu_t$, that
is
\begin{equation}\label{pseudoinverse}
\varphi(t,\xi)=\inf\left\{r\in\mathbb R_+;\,
\int_{0}^rd\hat\mu_t\geq \xi\right\}.
\end{equation}
$\varphi$ then satisfies
\begin{equation}\label{pi2}
\partial_t \varphi(t,\xi)=\hat{v}(t,\varphi(t,\xi))=\int_0^\infty \omega(\varphi(t,\xi),\eta)\,d\hat\mu_t(\eta).
\end{equation}
Note that by the definition of $\varphi$,
\begin{equation}\label{pi3}
\int_{[r_1,r_2]}\,d\hat\mu_t(\eta)=\int_{\{\xi;\,r_1\leq
\varphi(t,\xi)\leq r_2\}}\,d\xi.
\end{equation}
In the next theorem, we will work with solutions to system
\eqref{pde1r}-\eqref{pde2r} satisfying {\bf (HMR)} for which the
pseudo-inverse of the distribution function is an absolutely
continuous function on time satisfying \eqref{pi2} in the
classical sense a.e. in $t$. Solutions obtained in Section
\ref{sec:5} do satisfy these conditions.

\begin{theorem}[Stability: Tail control] \label{stability2}
Assume $\omega\in C^1(\mathbb R^2)$ and that $\delta_R$ is a
locally-stable stationary solution to \eqref{pde1r}-\eqref{pde2r},
that is, $\omega(R,R)=0$ and the local stability condition
\eqref{stabvelocity} holds. Assume moreover that the velocity
field associated to $\delta_R$ verifies
$$
\omega(r,R)>0 \mbox{ on } (0,R),\qquad \omega(r,R)<0 \mbox{ on }
(R,\infty), \qquad \mbox{and} \qquad \partial_{1}\omega(0,R)>0 \,,
$$
and the following long-range controls on the interaction
potential: for some $\alpha\ge 1$, there exists $\lambda>0$ such that
\begin{equation}\label{as1}
\omega(r,\eta)\leq \frac 1 \lambda-\lambda r^\alpha\textrm{ for
}(r,\eta)\in\mathbb R_+\times [R-\lambda,R+\lambda],
\end{equation}
\begin{equation}\label{as2}
\sup_{[0,\lambda]}\left|\partial_1\omega(\cdot,\eta)\right|\leq
\frac 1 \lambda (1+\eta^\alpha),
\end{equation}
\begin{equation}\label{as3}
|\omega(r,\eta)|\leq \frac 1
\lambda(1+r^\alpha)(1+\eta^\alpha)\textrm{ for }(r,\eta)\in
\mathbb R_+^2.
\end{equation}
Then, for any solution to \eqref{pde1r}-\eqref{pde2r} satisfying
{\rm\bf (HMR)} and \eqref{pi2} with initial data $\mu_0\in
\PP_2^r(\mathbb R^N)$ such that $\hat\mu_0(\{0\})=0$, and
$d_\alpha(\hat\mu_0,\delta_R)$ is small enough,
\begin{equation*}
\lim_{t\to\infty} d_\alpha(\hat\mu_t,\delta_R) = 0.
\end{equation*}
\end{theorem}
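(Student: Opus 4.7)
The plan is to pass to the pseudo-inverse $\varphi(t,\xi)$ from \eqref{pseudoinverse}, for which one has the explicit formula $d_\alpha(\hat\mu_t,\delta_R)^\alpha = \int_0^1 |\varphi(t,\xi)-R|^\alpha\,d\xi$ and whose characteristic equation \eqref{pi2} can be rewritten in the self-contained form
\begin{equation*}
\partial_t\varphi(t,\xi) = \int_0^1 \omega(\varphi(t,\xi),\varphi(t,\zeta))\,d\zeta.
\end{equation*}
The goal is to show that after a finite time the support of $\hat\mu_t$ has been dragged into the $\varepsilon_0$-neighborhood of $R$ on which Theorem \ref{stability} produces exponential decay of $d_2(\hat\mu_t,\delta_R)$; that decay then upgrades to $d_\alpha$ because the supports will be uniformly contained in a fixed compact subset of $(0,\infty)$.

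First I would produce a priori bounds on the support. For the upper end, I would split the integral above into $\zeta \in A_t := \{\zeta : \varphi(t,\zeta)\in [R-\lambda,R+\lambda]\}$ and its complement, and use \eqref{as1} on $A_t$ together with \eqref{as3} on $A_t^c$. Combined with the Chebyshev estimate $|A_t^c|\le \lambda^{-\alpha}\,d_\alpha(\hat\mu_t,\delta_R)^\alpha$, this yields a differential inequality of the form $\partial_t\varphi(t,\xi)\le \lambda^{-1}-\tfrac{\lambda}{2}\varphi(t,\xi)^\alpha$ as long as $d_\alpha(\hat\mu_t,\delta_R)$ stays small, and hence a uniform upper bound $\varphi(t,\xi)\le M$. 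For the lower end, the hypotheses $\omega(\cdot,R)>0$ on $(0,R)$ and $\partial_1\omega(0,R)>0$ together with the continuity of $\omega$ imply that whenever $\hat\mu_t$ is close to $\delta_R$ in the weak-$*$ sense the velocity $\hat v(t,\cdot)$ is strictly positive on a fixed interval $[0,r_*]$; since $\hat\mu_0(\{0\})=0$ forces $\varphi(0,\xi)>0$ for $\xi>0$, a barrier/continuity argument then gives a uniform lower bound $\varphi(t,\xi)\ge m(\xi)>0$.

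Once $\mathrm{supp}(\hat\mu_t)$ is confined in $[m,M]$, I would prove tail contraction. Differentiating $|\varphi(t,\xi)-R|^\alpha$ and splitting once more over $A_t$ and $A_t^c$, one uses the Taylor expansion of $\omega$ around $(R,R)$ together with the strict local stability \eqref{stabvelocity} to bound the contribution from $\zeta\in A_t$ by a negative linear term in $\varphi(t,\xi)-R$, while on $A_t^c$ the bound \eqref{as3} gives a remainder controlled by the small measure $|A_t^c|$; the bound \eqref{as2} provides the regularity needed to keep this expansion valid as $\xi\to 0$. Integrating in $\xi$ yields a Gr\"onwall-type inequality for $d_\alpha(\hat\mu_t,\delta_R)^\alpha$ which forces the tail mass to shrink, so that after some finite time $t_0$ the support enters $[R-\varepsilon_0,R+\varepsilon_0]$; Theorem \ref{stability} applied from $t_0$ concludes the proof. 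The main obstacle is the coupling in the tail step: \eqref{as1} only delivers a contraction for large $\varphi$, while \eqref{as3} allows the $A_t^c$ contribution to grow polynomially in $\varphi$, so closing the differential inequality requires $d_\alpha(\hat\mu_0,\delta_R)$ to be small enough relative to $\lambda$, and careful bookkeeping is needed to guarantee that the smallness assumption is preserved throughout the three steps before Theorem \ref{stability} takes over.
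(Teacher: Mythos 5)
There is a genuine gap, and it lies at the heart of your strategy: you plan to show that ``after a finite time $t_0$ the support of $\hat\mu_t$ enters $[R-\varepsilon_0,R+\varepsilon_0]$'' and then invoke Theorem~\ref{stability}. Under the hypotheses of Theorem~\ref{stability2} this finite-time confinement is false in general. The only assumption near the origin is $\hat\mu_0(\{0\})=0$, so the initial measure may have (small but positive) mass on every interval $(0,\epsilon)$, and the support may also be unbounded (only an $\alpha$-moment is controlled). Near $r=0$ the velocity vanishes linearly: since $\omega(0,\eta)=0$, the best available bound is $\hat v(t,r)\geq C_1 r$ (this is exactly the first inequality the paper derives from \eqref{as2} and $\partial_1\omega(0,R)>0$), so a characteristic starting at $r_0$ needs a time of order $C_1^{-1}\log(\vartheta/r_0)$ to leave $[0,\vartheta]$, which diverges as $r_0\to 0$. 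Hence for such data $\inf\,\mathrm{supp}(\hat\mu_t)$ stays arbitrarily close to $0$ at every finite time (similarly, for $\alpha=1$ and unbounded support no uniform upper bound $M$ is reached in finite time), your ``uniform lower bound'' is in truth only the $\xi$-dependent $m(\xi)$ you wrote down, and Theorem~\ref{stability} is never applicable. This is also why the theorem asserts only $d_\alpha(\hat\mu_t,\delta_R)\to 0$ with no rate, and why the paper's remark says the long-range controls become unnecessary precisely when the data are compactly supported. A related soft spot is your Gr\"onwall step: differentiating $\int_0^1|\varphi-R|^\alpha d\xi$ gives, for the mass near the origin, a dissipation of size only $O(r)$ rather than proportional to $|\varphi-R|^\alpha$, so the inequality does not close with a negative linear rate, and ``tail mass shrinking'' does not drag the support into the $\varepsilon_0$-neighborhood.

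The paper's proof avoids this reduction altogether. It keeps the tail and the bulk separate for the whole argument: it fixes $\varepsilon>0$, tracks the bulk through $\Gamma_\varepsilon(t)=\varphi(t,1-\sqrt\varepsilon)-\varphi(t,\sqrt\varepsilon)$ and the corresponding averaged displacement $\Theta_\varepsilon(t)$, and runs the local contraction of Theorem~\ref{stability} only on these quantities, with an extra error $C\sqrt\varepsilon$ coming from the tails via \eqref{as3}. The tails themselves are handled by quantitative transport estimates along \eqref{pi2}: mass in $[0,\vartheta]$ is controlled by $\int_0^{\vartheta e^{-C_1 t}}d\hat\mu_0$, mass in $[\vartheta,R-\vartheta]$ is emptied after a delay $R/v_1$, and mass beyond $R+\vartheta$ is pushed in at speed $v_1$; together these show the $\alpha$-moment of $\hat\mu_t$ outside $[R-\vartheta,R+\vartheta]$ becomes smaller than $\sqrt\varepsilon$ after some (non-uniform) time. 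Combining the two pieces gives $d_\alpha(\hat\mu_t,\delta_R)^\alpha\leq C\sqrt\varepsilon$ for large $t$, and letting $\varepsilon\to 0$ concludes. To repair your argument you would essentially have to adopt this structure: either add a compact-support-away-from-zero hypothesis (under which your reduction to Theorem~\ref{stability} does work), or replace ``support enters the neighborhood in finite time'' by ``all but an $\varepsilon$-fraction of the mass enters, and its contribution to $d_\alpha$ is controlled,'' which is the paper's Steps 2--3.
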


\begin{remark}
If we assume that the initial condition is compactly supported,
then the long-range controls \eqref{as1}, \eqref{as2}, \eqref{as3}
on the interaction potential are not
required anymore. Those are only necessary to control the behavior
of the tail of the distribution and its interaction with the rest.
\end{remark}

\begin{proof}[Proof of the Theorem]

\

\emph{Step 1.- ``Claim: Given
$\hat{\mu}\in\prob_{2}^{r}(\real^N)$. If
$d_\alpha(\hat\mu,\delta_R)$ is small, then the associated
velocity fields to $\hat\mu$ and $\delta_R$ share some confining
properties'':} For any $\vartheta>0$ small enough, thanks to our
assumptions on $\omega$, we can show that there exists $\Lambda>0$
such that if $d_\alpha(\hat\mu,\delta_R)\leq\Lambda$, then
\begin{equation}\label{setineq2}
\left\{\begin{array}{l}
\hat{v}(r)>C_1 \,r>0\textrm{ on }(0,\vartheta],\\[2mm]
\hat{v}(r)>v_1>0 \textrm{ on }[\vartheta,R-\vartheta],\\[2mm]
\hat{v}(r)<-v_1 \textrm{ on }[R+\vartheta,\infty),
\end{array}\right.
\end{equation}
where $\hat{v}(r)$ is the velocity field associated to $\hat\mu$
by \eqref{pde2r}. To prove the first inequality, notice that
$d_\alpha(\hat\mu,\delta_R)\leq\Lambda$ implies that
\begin{equation}\label{defepsilon}
\int_{[R-\sqrt\Lambda,R+\sqrt\Lambda]^c}\,d\hat\mu(\eta)\leq
\Lambda^{-\alpha/2}\int_{[R-\sqrt\Lambda,R+\sqrt\Lambda]^c}
|\eta-R|^\alpha\,d\hat\mu(\eta)\leq\Lambda^{\frac \alpha 2}
\end{equation}
is small. We can then estimate the velocity field $v$ for $0\leq r
\leq\vartheta\leq \lambda$:
\begin{align*}
\hat{v}(r)&=\int_0^\infty\omega(r,\eta)\,d\hat\mu(\eta)=\int_0^\infty\left[\omega(0,\eta)+r\partial_1\omega(\theta,\eta)\right]\,d\hat\mu(\eta)\\
&=r\int_{[R-\sqrt\Lambda,R+\sqrt\Lambda]}\partial_1\omega(\theta,\eta)\,d\hat\mu(\eta)+r\int_{[R-\sqrt\Lambda,R+\sqrt\Lambda]^c}\partial_1\omega(\theta,\eta)d\hat\mu(\eta).
\end{align*}
Note that $\omega(0,\eta)$ is equal to zero by definition. We then use \eqref{as2} to get the following estimate:
\begin{align*}
\left|\int_{[R-\sqrt\Lambda,R+\sqrt\Lambda]^c}\partial_1\omega(\theta,\eta)d\hat\mu(\eta)\right|&\leq \int_{[R-\sqrt\Lambda,R+\sqrt\Lambda]^c}C(1+\eta^\alpha)d\hat\mu(\eta)\\
&\leq C\left(\Lambda^{\frac \alpha
2}+d_\alpha(\hat\mu,\delta_R)^\alpha\right)\leq
C\Lambda^{\alpha/2}.
\end{align*}
Now, if $\Lambda$ is small enough and $r\in
[0,\vartheta]$, then thanks to \eqref{defepsilon} and an argument as in \eqref{gammatheta} we conclude
\begin{equation*}
\hat{v}(r)\geq
r\partial_1\omega(0,R)\left(1-\int_{[R-\sqrt\Lambda,R+\sqrt\Lambda]^c}\,d\hat\mu(\eta)-Cr\Lambda^{1/2}\right)-Cr\Lambda^{\alpha/2}\geq
\frac {\partial_1\omega(0,R)}2r.
\end{equation*}
\medskip

The second inequality \eqref{setineq2} comes directly from assumption \eqref{as3}
and the continuity of $\omega$: for $r\in [\vartheta,R-\vartheta]$
and $\Lambda$ small enough,
\begin{align*}
\hat{v}(r)=&\int_{[R-\sqrt\Lambda,R+\sqrt\Lambda]}\omega(r,\eta)\,d\hat\mu(\eta)+\int_{[R-\sqrt\Lambda,R+\sqrt\Lambda]^c}\omega(r,\eta)\,d\hat\mu(\eta)\\
\geq &\left(\omega(r,R)-\sqrt\Lambda \|\partial_2\omega\|_{L^\infty([\vartheta,R-\vartheta]\times[R-\sqrt\Lambda,R+\sqrt\Lambda] )}\right)\int_{[R-\sqrt\Lambda,R+\sqrt\Lambda]}\,d\hat\mu(\eta)\\
&-C\int_{[R-\sqrt\Lambda,R+\sqrt\Lambda]^c}(1+\eta^\alpha)\,d\hat\mu(\eta)\\
\geq &\frac 12\omega(r,R)-C\Lambda^{\alpha/2},
\end{align*}
where we have used \eqref{defepsilon}. Since $\omega(\cdot,R)>0$
on $(0,R)$, if $\Lambda>0$ is small enough, $v(r)>0$ on
$[\vartheta, R-\vartheta]$.

For the last inequality in \eqref{setineq2}, we can write the
velocity field as
\begin{equation}\label{estv3}
\hat{v}(r)=\int_{[R-\sqrt\Lambda,R+\sqrt\Lambda]}\omega(r,\eta)\,d\hat\mu(\eta)+\int_{[R-\sqrt\Lambda,R+\sqrt\Lambda]^c}\omega(r,\eta)\,d\hat\mu(\eta)
\end{equation}
and estimate the second term of \eqref{estv3} using \eqref{as3}
and \eqref{defepsilon} to obtain
\begin{align*}
\left|\int_{[R-\sqrt\Lambda,R+\sqrt\Lambda]^c}\omega(r,\eta)\,d\hat\mu(\eta)\right|\leq
C(1+r^\alpha)\int_{[R-\sqrt\Lambda,R+\sqrt\Lambda]^c}(1+\eta^\alpha)\,d\hat\mu(\eta)\leq
C\Lambda^{\alpha/2}(1+r^\alpha)\,.
\end{align*}
Let us distinguish two cases. In the set $r\geq \left(\frac
1\lambda\left(\frac 1\lambda-1\right)\right)^{1/\alpha}$ which is
equivalent to $\frac 1\lambda-\lambda r^\alpha\leq -1$, we deduce
that there exists $C_1>0$ such that $\frac 1\lambda-\lambda
r^\alpha\leq -C_1(1+r^\alpha)$. We can then control the first term
of \eqref{estv3} using \eqref{as1} and \eqref{defepsilon} to get
\begin{align*}
\int_{[R-\sqrt\Lambda,R+\sqrt\Lambda]}\omega(r,\eta)\,d\hat\mu(\eta)&\leq \left(\frac 1\lambda-\lambda r^\alpha\right)\int_{[R-\sqrt\Lambda,R+\sqrt\Lambda]}\,d\hat\mu\\
&\leq
-C_1(1+r^\alpha)\left(1-\int_{[R-\sqrt\Lambda,R+\sqrt\Lambda]^c}\,d\hat\mu\right)\leq
-\frac{C_1}2(1+r^\alpha).
\end{align*}
For $r\in I:=\left[R+\vartheta,\left(\frac 1\lambda\left(\frac
1\lambda-1\right)\right)^{1/\alpha}\right]$, we use the assumption
that $\omega(\cdot,R)<0$ on the compact interval $I$. By
continuity of $\omega$, we thus have that for $\Lambda>0$ small
enough and $r\in I$,
$$
\max\left\{\omega(r,\eta);\,r\in I,
\eta\in[R-\sqrt\Lambda,R+\sqrt\Lambda] \right\}:=-C_2 <0 \, ,
$$
and thus,
$$
\int_{[R-\sqrt\Lambda,R+\sqrt\Lambda]}\omega(r,\eta)\,d\hat\mu(\eta)\leq
-C_2\int_{[R-\sqrt\Lambda,R+\sqrt\Lambda]}\,d\hat\mu \leq
-\frac{C_2}{2}\leq-C_3(1+r^\alpha)<0\, ,
$$
for $r\in I$ and $\Lambda$ small enough using \eqref{defepsilon}.
Then, \eqref{estv3} becomes
\begin{equation*}
\hat{v}(r)\leq
\left(-\min(C_1,C_3)+C\sqrt\Lambda\right)(1+r^\alpha)\leq
-C_4(1+r^\alpha),
\end{equation*}
for any $r\geq R+\vartheta$, if $\Lambda>0$ is small enough.

\

\emph{Step 2.- ``Claim: If $\mu_0$ is close enough to $\delta_R$,
then $\hat \mu_t$ satisfies \eqref{setineq2} at all times.''} Let
$\varphi(t,\xi)$ the associated pseudo-inverse function associated
to $\hat\mu_t$ by \eqref{pseudoinverse}.
We assume that $\hat\mu_0$ satisfies
\begin{equation*}%\label{epsilon}
d_\alpha(\delta_R,\hat\mu_0)<\varepsilon.
\end{equation*}
For any $\varepsilon>0$,
we can estimate $|\varphi(0,\sqrt \varepsilon)-R|$ as follows:
\begin{align*}
\varepsilon & \geq d_\alpha(\delta_R,\hat\mu_0)\geq d_1(\delta_R,\hat\mu_0)=\int_0^1|\varphi(0,\xi)-R|\,d\xi\\
&\geq \max\left(\int_0^{\sqrt
\varepsilon}|\varphi(0,\xi)-R|\,d\xi,\int_{\sqrt
\varepsilon}^1|\varphi(0,\xi)-R|\,d\xi\right)\, .
\end{align*}
Since $\varphi$ is not decreasing, if $\varphi(0,\sqrt \varepsilon)\leq
R$, then $|\varphi(0,\xi)-R|\geq |\varphi(0,\sqrt\varepsilon)-R|$
for $\xi\in[0,\sqrt\varepsilon]$. If $\varphi(0,\sqrt
\varepsilon)\geq R$, then $|\varphi(0,\xi)-R|\geq
|\varphi(0,\sqrt\varepsilon)-R|$ for $\xi\in[\sqrt\varepsilon,1]$,
so that
\begin{equation*}
\varepsilon\geq \min \left(\sqrt\varepsilon |\varphi(0,\sqrt
\varepsilon)-R|,(1-\sqrt\varepsilon) |\varphi(0,\sqrt
\varepsilon)-R|\right),
\end{equation*}
which provides the estimate  $|\varphi(0,\sqrt \varepsilon)-R|\leq
\sqrt\varepsilon$. Similarly, $|\varphi(0,1-\sqrt
\varepsilon)-R|\leq \sqrt\varepsilon$, so that
\begin{equation}\label{ue}
\varphi(0,[\sqrt\varepsilon,1-\sqrt\varepsilon])\subset [R-
\sqrt\varepsilon ,R+ \sqrt\varepsilon].
\end{equation}
Let us define
$\Gamma_\varepsilon(t):=\varphi(t,1-\sqrt\varepsilon)-\varphi(t,\sqrt\varepsilon)$
and
\begin{equation*}
\Theta_\varepsilon(t):=\frac1{1-2\sqrt\varepsilon}\int_{\sqrt\varepsilon}^{1-\sqrt\varepsilon}\varphi(t,\xi)\,d\xi
-R\, .
\end{equation*}
Notice that for $\xi\in [\sqrt\varepsilon,1-\sqrt\varepsilon]$,
\begin{equation}\label{truc}
|\varphi(t,\xi)-R|\leq
\Gamma_\varepsilon(t)+\Theta_\varepsilon(t).
\end{equation}

For $0<\varepsilon\leq\varepsilon_0\leq \Lambda$, we define
$T:=\min\left\{t\in [0,\tau];\,
\Gamma_{\varepsilon}(t)+|\Theta_{\varepsilon}(t)|\geq
\varepsilon_0\text{ or } \,
d_\alpha(\delta_R,\hat\mu_t)\geq\varepsilon_0\right\}$. Thanks to
\eqref{ue}, $T>0$ by continuity for
$0<\varepsilon<\min(\Lambda,\varepsilon_0)$ small enough. We will
show that there exists $\varepsilon>0$ such that $T=+\infty$.
Assume by contradiction that $T<\infty$. By definition of $T$, we
have $d_\alpha(\delta_R,\hat{\mu}_t)\leq \varepsilon_0\leq
\Lambda$ for $t\in [0,T]$.

Thus, $\hat\mu_t$ satisfies \eqref{setineq2} for $t\in [0,T]$,
$v(t,\cdot)$ is positive on $[0,R-\vartheta]$ and negative on
$[R+\vartheta,\infty)$, and then \eqref{pi2} implies that
$\xi\in[0,1]$, $|\varphi(t,\xi)-R|\leq
\max(|\varphi(0,\xi)-R|,\vartheta)$. In particular, by
\eqref{defepsilon} we get
\begin{equation}\label{Omega}
 \int_{[\sqrt\varepsilon,1-\sqrt\varepsilon]^c}|\varphi(t,\xi)-R|^\alpha\,d\xi\leq \int_{[\sqrt\varepsilon,1-\sqrt\varepsilon]^c}\left[|\varphi(0,\xi)-R|^\alpha+\vartheta^\alpha\right]\,d\xi \leq C\sqrt\varepsilon.
\end{equation}
For $t\in [0,T]$, we deduce that
\begin{align*}
\frac d{dt}\Gamma_{\varepsilon}(t)=& \hat{v}(t,\varphi(t,1-\sqrt\varepsilon))-\hat{v}(t,\varphi(t,\sqrt\varepsilon))\\
=&\int_{[\varphi(t,\sqrt\varepsilon), \varphi(t,1-\sqrt\varepsilon)]} \left[\omega(\varphi(t,1-\sqrt\varepsilon),\eta)-\omega(\varphi(t,\sqrt\varepsilon),\eta)\right]\,d\hat\mu_t(\eta)\\
&+\int_{[\varphi(t,\sqrt\varepsilon),
\varphi(t,1-\sqrt\varepsilon)]^c}
\left[\omega(\varphi(t,1-\sqrt\varepsilon),\eta)-\omega(\varphi(t,\sqrt\varepsilon),\eta)\right]\,d\hat\mu_t(\eta).
\end{align*}
The first term can be estimated as it has been done for
$\Gamma(t)$ in the proof of  Theorem~\ref{stability}. To estimate the
second term, we use \eqref{as3}, \eqref{truc}, and \eqref{Omega}
to conclude that
\begin{align*}
&\left|\int_{[\varphi(t,\sqrt\varepsilon), \varphi(t,1-\sqrt\varepsilon)]^c} \left[\omega(\varphi(t,1-\sqrt\varepsilon),\eta)-\omega(\varphi(t,\sqrt\varepsilon),\eta)\right]\,d\hat\mu_t(\eta)\right|\\
&\quad \leq \frac C\lambda \left[1+\min\left(\varphi(t,\sqrt\varepsilon)^\alpha,\varphi(t,1-\sqrt\varepsilon)^\alpha\right)\right] \int_{[\varphi(t,\sqrt\varepsilon), \varphi(t,1-\sqrt\varepsilon)]^c} (1+\eta^\alpha)\,d\hat\mu_t(\eta)\\
&\quad \leq \frac C\lambda
\left[1+o\left(\Gamma_\varepsilon+|\Theta_\varepsilon|\right)\right]
\left[C\sqrt\varepsilon+ \int_{[\sqrt\varepsilon,1-\sqrt\varepsilon]^c}|\varphi(t,\xi)-R|^\alpha\,d\xi\right]\\
&\quad \leq C \sqrt\varepsilon +
o\left(\Gamma_\varepsilon+|\Theta_\varepsilon|\right).
\end{align*}
The same can be done for $\Theta_{\varepsilon}$, and we obtain
\begin{equation*}
\frac
d{dt}\left(\Gamma_\varepsilon+|\Theta_\varepsilon|\right)(t)\leq\max\left(\partial_1\omega(R,R),\,\frac
d{dR}\omega(R,R)
\right)\left(\Gamma_\varepsilon+|\Theta_\varepsilon|\right)(t)
+o\left(\Gamma_\varepsilon+|\Theta_\varepsilon|\right)+
C\sqrt\varepsilon.
\end{equation*}
As it has been done in the proof of  Theorem~\ref{stability},
$\varepsilon_0$ can be chosen small enough such that this implies
for $t\in [0,T]$, that
\begin{equation*}
\frac
d{dt}\left(\Gamma_\varepsilon+|\Theta_\varepsilon|\right)(t)\leq-\gamma\left(\Gamma_\varepsilon+|\Theta_\varepsilon|\right)(t)+
C\sqrt\varepsilon,
\end{equation*}
where $\gamma:=\frac
12\left|\max\left(\partial_1\omega(R,R),\,\frac d{dR}\omega(R,R)
\right)\right|$. Then, for $t\in [0,T]$,
\begin{equation*}
\left(\Gamma_\varepsilon+|\Theta_\varepsilon|\right)(t)\leq\max\left(\left(\Gamma_\varepsilon+|\Theta_\varepsilon|\right)(0),
\frac C\gamma \sqrt\varepsilon\right)\leq C\sqrt\varepsilon,
\end{equation*}
and
\begin{align*}
d_\alpha(\delta_R,\hat\mu_t)^\alpha\leq& \left[\left(\Gamma_\varepsilon+|\Theta_\varepsilon|\right)(t)\right]^\alpha+\int_{[\sqrt\varepsilon,1-\sqrt\varepsilon]^c}|\varphi(t,\xi)-R|^\alpha\,d\xi\\
\leq&
\max\left(\left(\Gamma_\varepsilon+|\Theta_\varepsilon|\right)(0),
\frac C\gamma \sqrt\varepsilon\right)^\alpha+ C\sqrt\varepsilon
\leq C\sqrt\varepsilon,
\end{align*}
due to \eqref{ue} and \eqref{Omega}.

If $\varepsilon>0$ is small enough, this implies that for $t\in
[0,T]$,
$\left(\Gamma_\varepsilon+|\Theta_\varepsilon|\right)(t)\leq\frac{\varepsilon_0}2$,
and $d_\alpha(\delta_R,\hat\mu(t))\leq \frac{\varepsilon_0}{2}$. By a
contradiction argument similar to the one used in the proof of Theorem
\ref{stability}, this shows that if $\varepsilon>0$ is small
enough, then $T=+\infty$, and \eqref{setineq2} is satisfied at all
times.

\

\emph{Step 3.- ``Claim: Asymptotic convergence of $\hat\mu_t$ to
$\delta_R$:''} Since \eqref{setineq2} is satisfied for all $t\geq
0$, $\hat{v}(t,r)\geq C_1\,r$ on $[0,\vartheta]$, and then
$\varphi(t,\xi)\geq \varphi(0,\xi)e^{C_1t}$ if $\varphi(t,\xi)\leq
\vartheta$, due to \eqref{pi2}. We can thus estimate, using
\eqref{pi3}:
\begin{align}
\int_0^\vartheta(1+r^\alpha)\,d\hat\mu_t(r)\leq&(1+R^\alpha)\int_0^\vartheta\,d\hat\mu_t(r)
=(1+R^\alpha)\int_{\{\xi;\varphi(t,\xi)\leq \vartheta\}}\,d\xi \nonumber\\
\leq& (1+R^\alpha)\int_{\{\xi;\varphi(0,\xi)\leq \vartheta
e^{-C_1t}\}}\,d\xi =(1+R^\alpha)\int_0^{\vartheta
e^{-C_1t}}\,d\hat\mu_0(r).\label{newtag}
\end{align}
Since \eqref{setineq2} is satisfied, we claim that
$\hat{v}(t,r)\geq v_1$ on $[\vartheta,R-\vartheta]$, and then
$\varphi(t,\xi)\in [0,R-\vartheta]$ for $t\geq \frac R{v_1}$
implies that $\varphi(t-\frac R{v_1},\xi)\leq\vartheta$.

To see this, we make use of \eqref{setineq2} to get
$\hat{v}(t,\cdot)\geq 0$ on $[0,R-\vartheta]$. If
$\varphi(t,\xi)\in [0,R-\vartheta]$, $\varphi(\cdot,\xi)$ is thus
increasing on $[0,t]$. If $\varphi(t-\frac R{v_1},\xi)\geq
\vartheta$, then $\varphi([t-\frac
R{v_1},t],\xi)\subset[\vartheta,R-\vartheta]$
\begin{equation*}
R>\varphi(t,\xi)-\varphi(t-\frac R{v_1},\xi)=\int_{t}^{t-\frac
R{v_1}} \hat{v}(\sigma,\varphi(\sigma,\xi))\,d\sigma\geq \frac
R{v_1}v_1,
\end{equation*}
which is absurd, thus $\varphi(t-\frac R{v_1},\xi)\leq \vartheta$
as desired. We can then estimate, for $t\geq \frac R{v_1}$, using
\eqref{pi3},
\begin{align*}
\int_0^{R-\vartheta}(1+r^\alpha)\,d\hat\mu_t(r)&\leq (1+R^\alpha)\int_0^{R-\vartheta}\,d\hat\mu_t(r)=(1+R^\alpha)
\left(\int_{0}^{\vartheta}\,d\hat\mu_t(\xi)+\int_{\vartheta}^{R-\vartheta}\,d\hat\mu_t(\xi) \right)\\
%\int_{\{\xi;\varphi(t,\xi)\in[0, R-\vartheta]\}}\,d\xi\\
&=(1+R^\alpha)\left( \int_{\{\xi;\varphi(t,\xi)\in[0,\vartheta]\}}\,d\xi+\int_{\vartheta}^{R-\vartheta}d\hat\mu_t(\xi)\right)\\
&\leq (1+R^\alpha)\int_{\{\xi;\varphi(0,\xi)\leq\vartheta e^{-C_1 t}\}}\,d\xi+ (1+R^\alpha)\int_{\{\xi;\varphi(t-\frac R{v_1},\xi)\leq
\vartheta\}}\,d\xi  \\
&= (1+R^\alpha)\int_0^{\vartheta e^{-C_1t}}\,d\hat\mu_0(r) + (1+R^\alpha)\int_0^\vartheta\,d\hat\mu_{t-R/v_1}(r).
\\
\end{align*}
Now, using a similar argument as in \eqref{newtag} since $\varphi(t-\frac{R}{v_1},\xi) \leq \vartheta$ in the last integral, we get
\begin{equation}
\int_0^{R-\vartheta}(1+r^\alpha)\,d\hat\mu_t(r) \leq 2(1+R^\alpha)\int_0^{\vartheta
e^{-C_1(t-R/v_1)}}\,d\hat\mu_0(r).\label{est1}
\end{equation}
Thanks to \eqref{setineq2}, if $\varphi(t,\xi)\geq R+ \vartheta$,
then
$$
\varphi(t,\xi)=\varphi(0,\xi)+\int_0^t
\hat{v}(s,\varphi(s,\xi))\,ds\leq \varphi(0,\xi)-v_1t\, .
$$
In particular, $\varphi(t,\xi)^\alpha\leq \varphi(0,\xi)^\alpha$
and, thanks to \eqref{pi2}, we get
\begin{align}
\int_{R+\vartheta}^\infty (1+r^\alpha)\,d\hat\mu_t(r)&=
\int_{\{\xi;\varphi(t,\xi)\geq
R+\vartheta\}}(1+\varphi(t,\xi)^\alpha)\,d\xi\leq
\int_{\{\xi;\varphi(0,\xi)\geq R+\vartheta+v_1t\}}(1+\varphi(0,\xi)^\alpha)\,d\xi\nonumber\\
&=\int_{R+\vartheta+v_1t}^\infty
(1+r^\alpha)\,d\hat\mu_0(r).\label{est2}
\end{align}
Let $\varepsilon>0$. Thanks to \eqref{est1}, \eqref{est2}, there
exists $\tau\geq 0$ such that for any $t\geq\tau$,
\begin{equation}\label{reste}
\int_{[R-\vartheta,R+\vartheta]^c}(1+r^\alpha)\,d\hat\mu_t(r)\leq
\sqrt\varepsilon.
\end{equation}
Then, in particular, for any $t\geq\tau$,
$\int_0^{R-\vartheta}\,d\hat\mu_t(r)\leq\sqrt\varepsilon$ and
$\int_{R+\vartheta}^\infty \,d\hat\mu_t(r)\leq \sqrt\varepsilon$,
that is $\varphi(t,[\sqrt\varepsilon,1-\sqrt\varepsilon])\subset
[R-\vartheta,R+\vartheta]$ and
$\left(\Gamma_\varepsilon+|\Theta_\varepsilon|\right)(t)\leq
3\vartheta$ due to \eqref{truc}. Now, for $t\geq \tau$, with an
argument similar to the one used in Step 2, we get
\begin{equation}
\frac
d{dt}\left(\Gamma_\varepsilon+|\Theta_\varepsilon|\right)(t)\leq\max\left(\partial_1\omega(R,R),\,\frac
d{dR}\omega(R,R) \right)
\left(\Gamma_\varepsilon+|\Theta_\varepsilon|\right)(t)+o\left(\Gamma_\varepsilon+|\Theta_\varepsilon|\right)+
C\sqrt\varepsilon.\label{eststep3}
\end{equation}
Since for any $t\geq \tau$,
$\varphi(t,\cdot)|_{[\sqrt\varepsilon,1-\sqrt\varepsilon]}$ takes
its values in the compact set $[R-\vartheta,R+\vartheta]$
independent of $\varepsilon$, we can apply an argument similar to
the one used in Step 2. Choose $\vartheta>0$ such that
\eqref{eststep3} implies
\begin{equation*}
\frac
d{dt}\left(\Gamma_\varepsilon+|\Theta_\varepsilon|\right)(t)\leq
-\gamma\left(\Gamma_\varepsilon+|\Theta_\varepsilon|\right)(t)+C\sqrt\varepsilon,
\end{equation*}
and then, there exists some $T\geq \tau$ such that for $t\geq T$,
\begin{equation}\label{estgt}
\left(\Gamma_\varepsilon+|\Theta_\varepsilon|\right)(t)\leq 2\frac
{C\sqrt\varepsilon}\gamma.
\end{equation}
To conclude, we notice that thanks to \eqref{reste} and
\eqref{pi3},
\begin{align*}
\int_{[\sqrt\varepsilon,1-\sqrt\varepsilon]^c}|\varphi(t,\xi)-R|^\alpha\,d\xi&\leq \int_{[\sqrt\varepsilon,1-\sqrt\varepsilon]^c}\max(\vartheta^\alpha,|\varphi(t,\xi)-R|^\alpha)\,d\xi\\
&\leq 2\vartheta^\alpha\sqrt\varepsilon+\int_{\{\xi;\,|\varphi(t,\xi)-R|\geq \vartheta\}}|\varphi(t,\xi)-R|^\alpha\,d\xi\\
&\leq 2\vartheta^\alpha\sqrt\varepsilon+\int_{[R-\vartheta,R+\vartheta]^c}|r-R|^\alpha d\hat\mu_t(r)\\
&\leq C\sqrt\varepsilon,
\end{align*}
which, together with \eqref{estgt}, implies that for $t\geq T$,
\begin{align*}
d_\alpha(\hat\mu_t,\delta_R)^\alpha &=\int_{[\sqrt\varepsilon,1-\sqrt\varepsilon]}|\varphi(t,\xi)-R|^\alpha\,d\xi +\int_{[\sqrt\varepsilon,1-\sqrt\varepsilon]^c}|\varphi(t,\xi)-R|^\alpha\,d\xi \\
&\leq C\sqrt\varepsilon.
\end{align*}
Since this is true for any $\varepsilon>0$, it shows that
$d_\alpha(\hat\mu_t,\delta_R)\to 0$ as $t\to\infty$.
\end{proof}

%%%%%%%%%%%%%%%%%%%%%%%%%%%%%%%%%%%%%%%%%%%%%%%%%%%%%%%%%%%%%%%%%%%%%%%%%%%%%%%%%%%%

\section{Existence theory}
\label{sec:5}

Existence and uniqueness of weak solutions for the aggregation
equation in $\prob_2(\real^N)\cap L^p(\real^N)$ have been given in
\cite{Laurent2007,BB,BLR}. Weak measure solutions to the the
Cauchy problem for the aggregation equation \eqref{pdes1} where
given in \cite{CDFLS} under the condition that the potential is
smooth except possibly at the origin, the growth at infinity is no
worse than quadratic, and the singularity at the origin of the
derivative of the potential is not worse that Lipschitz. This
section is aimed to give an existence theory of classical
solutions for the aggregation equation.

We will denote by $\mathcal{W}^{m,p}(\real^N)$, $1\leq p\leq
\infty$ and $m\in \mathbb{N}$, the Sobolev spaces.

\begin{theorem}[Existence of classical solutions]\label{classicalexistence}
Let $W$ satisfy
\begin{equation}
\nabla W\in L^1(\real^N),\quad D^2W\in L^1(\real^N),\quad (\Delta W)_+\in L^\infty(\real^N)
\end{equation}
Then, for any initial data $\rho_0(x)\in
\mathcal{W}^{2,\infty}(\real^N)$, there exist classical solutions
$\rho\in C^1([0,T]\times \real^N)\cap
\mathcal{W}^{1,\infty}_{loc}(\real_+,\mathcal{W}^{1,\infty}(\real^N))$
to \eqref{pdes1}. Moreover, if $\rho_0(x)\in
\mathcal{W}^{\kappa,\infty}(\real^N)$ for $\kappa\in \mathbb{N}$,
$\kappa \geq 2$, then $\rho\in C^1((\real^N)\times[0,T])\cap
\mathcal{W}_{loc}^{\kappa-1,\infty}(\real_+,\mathcal{W}^{\kappa-1}(\real^N))$.
Furthermore, assuming in addition that $\rho_0\in L^1(\real^N)$
with bounded second moment, the solution is unique.
\end{theorem}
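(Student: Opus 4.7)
The plan is to solve \eqref{pdes1} classically via the characteristic method, closing the non-local problem by a Picard iteration. If $v(t,x) := -\nabla W * \rho(t,\cdot)(x)$ and $X(t,x)$ solves $\dot X = v(t,X)$ with $X(0,x) = x$, then $\rho(t,\cdot) = X(t,\cdot)_\# \rho_0$, and along characteristics
\[
\rho(t, X(t,x)) = \rho_0(x)\, \exp\Bigl(-\int_0^t \mathrm{div}\, v(s, X(s,x))\, ds\Bigr).
\]
The problem therefore reduces to constructing a fixed point for the map that sends a density to the push-forward of $\rho_0$ by the flow of its own associated velocity.

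I would first derive a priori bounds sufficient to close the iteration. The hypotheses on $W$ give $\|v\|_\infty \le \|\nabla W\|_{L^1}\|\rho\|_\infty$ and $\|\nabla v\|_\infty = \|D^2W*\rho\|_\infty \le \|D^2W\|_{L^1}\|\rho\|_\infty$, so $v$ is globally Lipschitz in space as long as $\|\rho\|_\infty$ is finite and thus generates a classical flow. To control $\|\rho\|_\infty$, use $\mathrm{div}\, v = -\Delta W * \rho$ together with $\rho \ge 0$ and the conservation $\|\rho(t)\|_{L^1} = \|\rho_0\|_{L^1}$: splitting $\Delta W = (\Delta W)_+ - (\Delta W)_-$ gives $\mathrm{div}\, v \ge - \|(\Delta W)_+\|_\infty \|\rho_0\|_{L^1}$, and the representation formula yields $\|\rho(t)\|_\infty \le \|\rho_0\|_\infty \exp(C\,t)$ globally in time, with $C := \|(\Delta W)_+\|_\infty\|\rho_0\|_{L^1}$. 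Differentiating the transport equation once and applying Gronwall upgrades this to a bound on $\|\rho(t)\|_{\mathcal{W}^{2,\infty}}$; iterating the argument produces analogous $\mathcal{W}^{\kappa,\infty}$ bounds when $\rho_0 \in \mathcal{W}^{\kappa,\infty}$.

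Next I would run the Picard iteration: starting from $\rho^{(0)} \equiv \rho_0$, set $v^{(n)} := -\nabla W * \rho^{(n)}$ and let $\rho^{(n+1)}$ be the push-forward of $\rho_0$ by the classical flow $X^{(n)}$ of $v^{(n)}$, equivalently the unique solution of the linear transport equation driven by $v^{(n)}$. The a priori step supplies uniform-in-$n$ $\mathcal{W}^{2,\infty}$ bounds on any $[0,T]$. On a sufficiently short interval $[0,\tau]$ the map $\rho^{(n)} \mapsto \rho^{(n+1)}$ is a contraction in $C([0,\tau];L^1\cap L^\infty)$: subtracting the ODEs for $X^{(n)}$ and $X^{(n-1)}$, using $\|v^{(n)} - v^{(n-1)}\|_\infty \le \|\nabla W\|_{L^1}\|\rho^{(n)}-\rho^{(n-1)}\|_\infty$, and applying Gronwall does the job. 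Passing to the limit gives a solution on $[0,\tau]$, and the global-in-time a priori bounds let us iterate over successive intervals to reach any $T > 0$. The limit $\rho \in L^\infty_{loc}(\real_+;\mathcal{W}^{2,\infty})$ satisfies $\partial_t \rho = -v\cdot\nabla\rho - \rho\,\mathrm{div}\, v \in L^\infty$, whence $\rho \in C^1([0,T]\times\real^N)$; the higher-regularity statement follows by running the same scheme one derivative higher, which accounts for the loss of one derivative, since each time derivative trades a spatial one via the PDE.

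For uniqueness, given two solutions $\rho_1, \rho_2$ arising from the same $\rho_0 \in L^1$ with finite second moment, compare their characteristic flows $X_1, X_2$. Subtracting the ODEs and combining the Lipschitz bound on each $v_i$ with the pointwise control $\|v_1 - v_2\|_\infty \le \|D^2W\|_{L^1}\,\|\rho_1 - \rho_2\|_{L^\infty}$ (or its $L^1$ analogue) yields a closed Gronwall inequality forcing $X_1 \equiv X_2$ and hence $\rho_1 \equiv \rho_2$. The main technical obstacle throughout is the mixed $L^1/L^\infty$ nature of the assumptions on $W$: each convolution derivative must be paired with the correct factor—$\nabla v = -D^2W*\rho$ for the Lipschitz bound, $\mathrm{div}\, v = -\Delta W*\rho$ split into signed parts for the $L^\infty$ growth bound—so that mass and integrability are never lost and all constants in the a priori estimates and the contraction argument depend only on the given hypotheses.
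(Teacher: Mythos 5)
Your construction is correct in outline but follows a genuinely different route from the paper. The paper does not build the solution by a fixed point on the characteristics: it regularizes the potential, taking $W^\varepsilon\in\mathcal{W}^{2,\infty}$ with $\nabla W^\varepsilon\to\nabla W$ in $L^1$ and uniform bounds on $\|D^2W^\varepsilon\|_{L^1}$ and $\|(\Delta W^\varepsilon)_+\|_{L^\infty}$, invokes the known well-posedness theory for regular potentials to obtain classical solutions $\rho^\varepsilon$, proves exactly the a priori estimates you propose (an $L^\infty$ bound obtained at a maximum point of $\rho$ and, by Leibniz expansions at extremum points of derivatives, exponential bounds on $\|D^K\rho\|_\infty$ and on time derivatives), and then passes to the limit by Ascoli--Arzel\`a; uniqueness is imported from the references rather than reproved. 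Your Picard iteration on $\rho^{(n+1)}=X^{(n)}_\#\rho_0$ is self-contained and has the advantage of producing the flow map directly, and it rests on the same pairing of norms ($\|\nabla W\|_{L^1}$, $\|D^2W\|_{L^1}$ against $\|\rho\|_\infty$, and $(\Delta W)_+\in L^\infty$ against the conserved mass); the paper's route buys economy by outsourcing the approximate problems and the uniqueness to existing results.

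Two points in your sketch need tightening. First, the claim that the a priori step gives uniform-in-$n$ $\mathcal{W}^{2,\infty}$ bounds on any $[0,T]$ is not automatic: differentiating the linear transport equation for $\rho^{(n+1)}$ produces the source term $\rho^{(n+1)}\,\nabla\,\mathrm{div}\,v^{(n)}=-\rho^{(n+1)}\,(\Delta W*\nabla\rho^{(n)})$, so the bound for $\rho^{(n+1)}$ involves $\|\nabla\rho^{(n)}\|_\infty$; you must close this by an induction/comparison argument (possible here because each iterate is a push-forward of $\rho_0$, so its mass and hence the lower bound on $\mathrm{div}\,v^{(n)}$ are uniform in $n$), or restrict first to a short interval and then continue using the bounds on the limit. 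Second, the uniqueness paragraph as written is circular and contains a slip: the correct estimate is $\|v_1-v_2\|_\infty\le\|\nabla W\|_{L^1}\|\rho_1-\rho_2\|_\infty$ (not $\|D^2W\|_{L^1}$), and bounding $v_1-v_2$ by $\rho_1-\rho_2$ while trying to conclude $X_1\equiv X_2$ requires in turn an estimate of $\|\rho_1-\rho_2\|$ in terms of the flow difference (via the Lipschitz bounds on $\rho_0$, on the flows, and on $\mathrm{div}\,v_i$), so the Gronwall inequality must be run on a coupled quantity such as $\|X_1-X_2\|_\infty+\|\rho_1-\rho_2\|_\infty$; alternatively one can simply quote the uniqueness results the paper cites, which is what the extra hypothesis $\rho_0\in L^1$ with bounded second moment is there for. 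Finally, note that $\partial_t\rho\in L^\infty$ alone does not give $\rho\in C^1$; continuity of $\partial_t\rho$ and $\nabla\rho$ follows from the space-time Lipschitz bounds supplied by the second-order estimates, and this should be said explicitly.
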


\begin{proof}[Proof of the theorem.]
{\it Step 1: A priori estimates}. In this step we assume that the
solution is smooth as needed. This assumption will be removed in
the next step.

We consider first $x\in \real^N$ such that
$\rho(x,t)=\norm{\rho(t)}_{\infty}$. Then, $\nabla \rho(x,t)=0$,
and
\begin{align*}
\partial_t\rho(x,t) &= \nabla\rho(x,t)(\nabla W\ast \rho)(x,t)+\rho(x,t)(\Delta W\ast \rho)(x,t)\\
&\leq \rho(x,t)((\Delta W\ast \rho)_+(x,t))\\
&\leq \norm{(\Delta W)_+}_{L^\infty}\rho(x,t),
\end{align*}
so that
\begin{equation}\label{estimateonrho}
\norm{\rho(t)}_{\infty}\leq \norm{\rho_0}_{\infty}e^{\norm{(\Delta W)_+}_{L^\infty}t}.
\end{equation}
Let now $K\in\mathbb{N}$, $K\leq \kappa$, $i=(i_1,\dots,i_N)\in
\N^N $ and $x\in\real^N$ be such that $\sum_{j=1}^{N}i_j=K$, $
\abs{\partial_{i_1}\dots
\partial_{i_N}\rho(x,t)}=\norm{\partial_{i_1}\dots\partial_{i_N}\rho(t)}_{\infty}$,
and we define
\[
  \norm{D^K\rho}_{p}=\sup\lbrace \norm{\partial_\sigma\rho}_{L^p}, \abs{\sigma}\le K\rbrace,\quad 1\leq p \leq \infty, K\leq \kappa,
\]
where $\sigma=(\sigma_1,\dots,\sigma_N)\in\N^N$. W.l.o.g. we
suppose that $\partial_{i_1}\dots \partial_{i_N}\rho(x,t)\geq 0$
(to change the sign of this term, one just needs to replace the
element $e_{1}$ of the basis of $\real^N$ by $-e_{1}$), and then,
\begin{align*}
\partial_t\partial_{i_1}\dots \partial_{i_N}\rho(x,t)=\, &\partial_{i_1}\dots \partial_{i_N}\nabla_x\cdot(\rho(\nabla W\ast \rho))(x,t)\\
=\, & \sum_{k=0}^{K}\sum_{\substack{ |\sigma|=k \\ \sigma\leq i}} (\partial_\sigma \nabla\rho(x,t))\cdot(\partial_{\sigma^c}(\nabla W\ast \rho)(x,t))\\
&\, + \sum_{k=0}^{K}\sum_{\substack{ |\sigma|=k \\ \sigma\leq i }} (\partial_\sigma \rho(x,t)) ( \partial_{\sigma^c}(\Delta W\ast \rho)(x,t)).
\end{align*}
Here, $\sigma\leq i$ denotes $\sigma_j\leq i_j$ for $j=1,\dots, N$ and $\sigma^c=\sigma-i$.
Using that the term $k=K$ in the first sum is zero one obtains
\begin{align}
\partial_t\partial_{i_1}\dots\partial_{i_N}\rho(x,t)=& \nabla \rho(x,t)\cdot(\partial_{i_1}\nabla W)\ast(\partial_{i_2}\dots\partial_{i_N}\rho(x,t))\nonumber\\
&+\rho(x,t)(\Delta W\ast(\partial_{i_1}\dots\partial_{i_N}\rho(x,t))(x,t)\nonumber\\
&+(\partial_{i_1}\dots\partial_{i_N}\rho(x,t))(\Delta W\ast\rho)(x,t)\nonumber\\
&+\sum_{k=1}^{K-1}\sum_{\substack{|\sigma|=k\\\sigma\leq i}}(\partial_{\sigma}\nabla\rho(x,t))(\nabla W\ast\partial_{\sigma^c}\rho)(x,t)\nonumber\\
&+\sum_{k=1}^{K-1}\sum_{\substack{|\sigma|=k\\\sigma\leq i}}(\partial_{\sigma}\rho(x,t))(\nabla W\ast\nabla\partial_{\sigma^c}\rho)(x,t),\label{derivatives}
\end{align}
and then, we get the estimate:
\begin{align*}
\partial_t\partial_{i_1}\dots\partial_{i_N}\rho(x,t)\leq\, &\, \norm{\nabla\rho}_{\infty}\norm{\partial_{i_1}\nabla W}_{L^1}\norm{\partial_{i_2}\dots\partial_{i_N}\rho}_{\infty}\\
&+\norm{\partial_{i_1}\dots\partial_{i_N}\rho}_{\infty}\norm{\Delta W}_{L^1}\norm{\rho}_{\infty}\\
&+\sum_{k=1}^{K-1}\sum_{\substack{|\sigma|=k\\\sigma\leq i}} \norm{\partial_{\sigma}\nabla\rho}_{\infty}\norm{\nabla W}_{L^1}\norm{\partial_{\sigma^c}\rho}_{\infty}\\
&+\sum_{k=1}^{K-1}\sum_{\substack{|\sigma|=k\\\sigma\leq i}} \norm{\partial_{\sigma}\rho}_{\infty}\norm{\nabla W}_{L^1}\norm{\nabla \partial_{\sigma^c}\rho}_{\infty}\\
\leq\, &\, \norm{\rho}_{\infty}\norm{\Delta W}_{L^1}\norm{\partial_{i_1}\dots\partial_{i_N}\rho}_{\infty}+\norm{\partial_{i_1}\nabla W}_{L^1}\norm{\partial_{i_2}\dots\partial_{i_N}\rho}_{\infty}\norm{\nabla\rho}_{\infty}\\
&+C_K\norm{\nabla W}_{L^1}\norm{D^{K-1}\rho}^2_{\infty}\norm{D^K\rho}_{\infty}.
\end{align*}
An induction scheme on $K$ initialized by \eqref{estimateonrho}
then provides the following exponential control for $K\in\lbrace
0,\dots,\kappa\rbrace$:
\begin{equation}
\norm{D^K\rho}_{\infty}\leq C_{1,K}e^{C_{2,K}t}\label{estunifbound}
\end{equation}
where $C_{1,K}, C_{2,K}$ only depend on $K$, $\norm{D^2W}_{L^1}$,
$\norm{(\Delta W)_+}_{L^\infty}$, and $\norm{\rho_0}_{{\mathcal
W}^{\kappa,\infty}}$. Coming back to \eqref{derivatives} the
following estimate on the time derivative follows:
\begin{equation}\label{estunifbound2}
\norm{\frac{d}{dt}D^K\rho}_{\infty}\leq \tilde{C}_{1,K}e^{\tilde{C}_{2,K}t}.
\end{equation}
Finally, taking the derivative in $t$ on \eqref{pdes1} we get
\begin{align*}
   \partial^2_t \rho = &(\nabla W\ast\partial_t \rho)\cdot(\nabla W\ast\rho) +\nabla_x\rho\cdot(\nabla W\ast \partial_t\rho)\\
&+\rho(\Delta W\ast \partial_t \rho) + \partial_t\rho(\Delta W\ast
\rho),
\end{align*}
from which it is easy to derive
\begin{equation}\label{estimatestime}
\norm{\frac{d^2}{dt^2}\rho}_{\infty}\leq Ce^{Ct}\, .
\end{equation}

{\it Step 2: Construction of a solution trough an approximation problem}. Let $W^\varepsilon$ be a smooth approximation of $W$, that is
 $W^{\varepsilon}\in \mathcal{W}^{2,\infty}(\real^N)$ such that
$\norm{D^2W^\varepsilon}_{L^1}$,
$\norm{(\Delta W^\varepsilon)_+}_{L^\infty}$ are uniformly bounded, and:
\[
   \nabla W^\varepsilon\mathop{\longrightarrow}^{\varepsilon\to 0}\nabla W\quad\text{in}\quad L^1(\real^N).
\]
Thanks to \cite{Laurent2007,BLR}, there exists a classical
solution $\rho^\varepsilon\in
\mathcal{W}^{1,\infty}_{loc}(\real_{+},\mathcal{W}^{1,\infty}(\real^N))$
with initial data $\rho_0$ for each regular interaction potential
$W^\varepsilon$.

The estimate \eqref{estunifbound} provides a uniform bound on
$\norm{\rho^\varepsilon}_{\infty}$,
$\norm{\nabla\rho^\varepsilon}_{\infty}$ and
$\norm{D^2\rho^\varepsilon}_{\infty}$. Since $\kappa\geq 2$ then
\eqref{estunifbound2} implies that $\frac{d}{dt}\rho^\varepsilon$
and $\left(\frac{d}{dt}\nabla\rho^\varepsilon\right)$ are
uniformly bounded for $\varepsilon>0$. Applying the
Ascoli-Arzel\'a theorem, due to \eqref{estunifbound},
\eqref{estunifbound2} and \eqref{estimatestime} there exist limits
for $\rho^\varepsilon$, $\partial_t\rho^\varepsilon$ and
$\nabla_x\rho^\varepsilon$ (where we have written $\varepsilon$
instead of $\varepsilon_k$) on $C([0,T]\times B)$ for any compact
subset $B\subset \real^N$ and moreover the limits denoted by
$\rho$, $\partial_t\rho$ and $\nabla_x\rho$ belong to
$C([0,T];\mathcal{W}^{1,\infty}(\real^N))$. For the velocity field
$v^\varepsilon$ we have that
\begin{align*}
\abs{v^\varepsilon (x,t)-v(x,t)}&\leq \int_{\real^N}|(\nabla W^\varepsilon-\nabla W)(x-y)|\rho^\varepsilon(y)\,dy
+\int_{\real^N}\abs{\nabla W(x-y)}\abs{\rho^\varepsilon(y)-\rho(y)}\,dy\\
&=(I)_{\varepsilon}+(II)_{\varepsilon}.
\end{align*}
For $(I)_{\varepsilon}$ one observes that
\[
  \abs{(I)_{\varepsilon}}\leq \norm{\nabla W^\varepsilon-\nabla W}_{L^1}\norm{\rho^\varepsilon}_{\infty} \to 0\quad\text{as}\quad \varepsilon\to 0,
\]
and for $(II)_{\varepsilon}$ one has that that
\[
   \abs{\nabla W(x-y)}\abs{\rho^\varepsilon(y)-\rho(y)}\leq C \abs{\nabla W(x-y)}\in L^1(\real^N).
\]
Then, by the dominated convergence theorem, $(II)_{\varepsilon}\to
0$ as $\varepsilon\to 0$ and, as a consequence, $v^\varepsilon(x,t)$
converges pointwise to $v(x,t)$ in $\real^N\times[0,T]$ for all
$T>0$. The same reasoning is used to prove that $\nabla_x\cdot
v^\varepsilon \to \nabla_x\cdot v$ as $\varepsilon\to 0$. Thus,
the regularized equation
\begin{equation*}
\frac{d}{dt}\rho^\varepsilon=\nabla \rho^\varepsilon\cdot(\nabla W^\varepsilon\ast\rho^\varepsilon)+(\Delta W^\varepsilon\ast\rho^\varepsilon)\rho^\varepsilon)
\end{equation*}
passes to the limit and $\rho\in C^1([0,T]\times\real^N)\cap
\mathcal{W}^{1,\infty}([0,T],\mathcal{W}^{1,\infty}(\real^N))$.

The propagation of the regularity follows from estimates
\eqref{estunifbound} and \eqref{estunifbound2}. The proof of
uniqueness follows from \cite{MR2648318,BLR}.
\end{proof}

\begin{remark}
Under the assumptions on $W$ in the previous theorem we have that $\rho$
is Lipschitz continuous both in space and time, and then the
characteristics are well defined:
\[
   \frac{d}{dt}X_t = -(\nabla W\ast \rho)(X_t,t),
\]
and the solution $\rho$ is given by
\[
   \rho(x,t) = \rho_0(X_{t}^{-1})\det (DX_t^{-1}).
\]
\end{remark}

\begin{remark}
If $W$ and $\rho_0$ are radially symmetric, then one can easily
check that the problem is invariant through rotations around the
origin. The uniqueness result then shows that the solution $\rho$
is radially symmetric at all times. We also point out that if the
solution is compactly supported then it remains of compact support
for all times.
\end{remark}

%%%%%%%%%%%%%%%%%%%%%%%%%%%%%%%%%%%%%%%%%%%%%%%%%%%%%%%%%%%%%%%%%%%%%%%%%%%%%%%%%%%%%%%%%%%%%%%%%%%%%%%%%

\section{The example of power law repulsive-attractive potentials}
\label{sec:6}

The aim of this section is to show an example of how to apply the
general instability and stability theory in the case of power law
repulsive-attractive potentials:
\begin{equation} \label{power}
W(x)=\frac{\abs{x}^a }{a }-\frac{\abs{x}^b }{b }
\qquad 2-N<b <a.
\end{equation}
The condition $b<a$ ensures that the potential is repulsive in the
short range and attractive in the long range. One can easily check
that for these type of potentials $\Delta W \in
L^\infty_{loc}(\real^N)$. The condition $2-N<b$ ensures that the
potential is in $\mathcal{W}^{1,q}_{loc}(\real^N)$ for some
$1<q<\infty$. Using algebraic computations, involving the Beta
function, we give the conditions that the powers $a$ and $b$
should satisfy in order to apply the stability and instability
theory, and we construct the bifurcation diagram for these powers.
The main results of this section are the following:

\begin{theorem}[Global existence of solutions for repulsive-attractive potentials]\label{theorem:globalpowers}
Given $W$ by \eqref{power}. Assume $\rho_0\in
\mathcal{W}^{2,\infty}(\real^N)$ is compactly supported and
radially symmetric. Then there exists a global in time classical
solution for \eqref{pde1r}-\eqref{pde2r}. Furthermore, the
solution is compactly supported and confined in a large ball for
all times.
\end{theorem}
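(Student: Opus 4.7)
The plan is to combine an a priori confinement estimate for radially symmetric solutions with the classical existence theory of Theorem~\ref{classicalexistence} applied to a suitable cutoff of $W$. Since $W(x)=\abs{x}^a/a-\abs{x}^b/b$ has $\nabla W$ and $D^2 W$ growing at infinity, it does not directly satisfy the hypotheses of Theorem~\ref{classicalexistence}. The idea is to first show that any radial solution starting with compact support in $B(0,R_0)$ remains supported in some ball $B(0,R^*)$ with $R^*$ depending only on $R_0$, then replace $W$ by a radial cutoff $\tilde W$ coinciding with $W$ on $B(0,4R^*)$ and satisfying the hypotheses of Theorem~\ref{classicalexistence}.

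The confinement estimate is the core technical step. For a radial classical solution and $r_2(t):=\max\textrm{supp}(\hat\mu_t)$, the regularity established in Section~\ref{sec:5} (in particular condition {\bf (HMR)}) gives $r_2'(t)=\hat v(t,r_2(t))=\int_0^{r_2(t)}\omega(r_2(t),\eta)\,d\hat\mu_t(\eta)$. Setting $s=\eta/r\in[0,1]$ and using $k'(\tau)=\tau^{a-1}-\tau^{b-1}$ in \eqref{omega-def2}, an explicit calculation yields
\begin{equation*}
\omega(r,\eta)=r^{a-1}A(s)-r^{b-1}B(s),
\end{equation*}
with $A(s):=\sigma_N^{-1}\int_{\partial B(0,1)}u(y,s)^{a-2}(sy_1-1)\,d\sigma(y)$, $u(y,s):=\sqrt{1-2sy_1+s^2}$, and $B$ defined analogously with $b$ in place of $a$. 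Since $sy_1-1\le 0$ on $\partial B(0,1)$ with strict inequality off a null set, $A,B<0$ on $[0,1]$; the assumption $b>2-N$ ensures the integrands are integrable near $y=e_1$ when $s=1$, so $A,B\in C([0,1])$. Thus $M:=\sup_{s\in[0,1]}\abs{B(s)/A(s)}<\infty$, and for $r\ge M^{1/(a-b)}$ and all $\eta\in[0,r]$ we have $r^{a-b}\abs{A(s)}\ge\abs{B(s)}$, hence $\omega(r,\eta)\le 0$. A standard continuation argument then yields $r_2(t)\le R^*:=\max(R_0,M^{1/(a-b)})$ for all $t\ge 0$.

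Given $R^*$, choose a radial cutoff $\chi\in C_c^\infty(\real^N)$ with $\chi\equiv 1$ on $B(0,4R^*)$ and $\chi\equiv 0$ outside $B(0,5R^*)$ and set $\tilde W:=\chi W$. The hypothesis $2-N<b<a$ ensures the conditions of Theorem~\ref{classicalexistence}: $\nabla\tilde W\in L^1(\real^N)$ and $D^2\tilde W\in L^1(\real^N)$, since the singularities $\abs{x}^{b-1}$ and $\abs{x}^{b-2}$ at the origin are locally integrable (because $b+N-2>0$), while $(\Delta\tilde W)_+\in L^\infty(\real^N)$ because $\Delta W=(a+N-2)\abs{x}^{a-2}-(b+N-2)\abs{x}^{b-2}\to -\infty$ near the origin (the $b$-term dominates since $a>b$), so $(\Delta W)_+$ vanishes there, while $\tilde W$ is smooth and compactly supported away from $0$. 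Theorem~\ref{classicalexistence} then produces a global classical solution $\tilde\rho$ with potential $\tilde W$, which by radial symmetry of $\tilde W$ and $\rho_0$ and by uniqueness is radial at all times. Applying the confinement argument of the previous paragraph to $\tilde\rho$, which is valid because the kernel $\tilde\omega$ generated by $\tilde W$ coincides with $\omega$ whenever $r+\eta\le 4R^*$ (in particular as long as $\textrm{supp}(\tilde\rho_t)\subset B(0,2R^*)$), a continuation argument gives $\textrm{supp}(\tilde\rho_t)\subset B(0,R^*)$ for all $t\ge 0$. Finally, since $\textrm{supp}(\tilde\rho_t)-\textrm{supp}(\tilde\rho_t)\subset B(0,2R^*)\subset B(0,4R^*)$, we have $\nabla\tilde W\ast\tilde\rho_t=\nabla W\ast\tilde\rho_t$ on $\textrm{supp}(\tilde\rho_t)$, so $\tilde\rho$ is in fact a global classical solution to \eqref{pdes1} with the original potential $W$. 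The main obstacle is the confinement step, in particular handling the self-interaction regime $\eta\approx r$; this is resolved by the continuity of $A$ and $B$ at $s=1$, which hinges crucially on the lower bound $b>2-N$.
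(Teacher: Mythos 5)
Your proposal is correct, and its overall architecture coincides with the paper's: an a priori bound on $r_2(t)=\max\,\textrm{supp}(\hat\mu_t)$ obtained from the sign of $\omega(r,\eta)$ for $\eta\le r$ and $r$ large, a truncated potential to which Theorem~\ref{classicalexistence} applies, radial symmetry by uniqueness, and the observation that the truncated and original kernels coincide as long as the support stays confined. The execution differs in two ways that are worth recording. For the confinement, the paper derives $r_2'\le K_b r_2^{b-1}-K_a r_2^{a-1}$ with explicit constants coming from the behaviour of $\psi_a,\psi_b$ (Lemma~\ref{lemma1}), while you use only that $A=-\psi_a$ and $B=-\psi_b$ are continuous and strictly negative on $[0,1]$ (continuity up to $s=1$ is exactly Lemma~\ref{regsphere}(i), and does require $b>2-N$ as you say) and take $M=\sup_{[0,1]}\abs{B/A}$, giving $\omega(r,\eta)\le 0$ for $r\ge M^{1/(a-b)}$, $\eta\le r$; this is a cleaner route to the same bound. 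For the truncation, the paper cuts off only the attractive part at radius $1/\varepsilon$, gets a rough local-in-time control $r_2^\varepsilon(t)\le\sigma(t)$ from $\omega_A^\varepsilon\ge 0$, shows the cutoff is invisible for $\varepsilon$ small, and then iterates in time with a step independent of the data; your choice of the truncation radius $4R^*$ dictated in advance by the a priori bound makes the argument one-shot, with no time-stepping, and since you truncate the whole potential the hypotheses $\nabla\tilde W, D^2\tilde W\in L^1(\real^N)$ of Theorem~\ref{classicalexistence} hold literally (the paper's $W^\varepsilon=W_R+W^\varepsilon_A$ keeps the untruncated repulsive tail, for which $\nabla W^\varepsilon\in L^1(\real^N)$ is not literally satisfied), so your variant is if anything tidier. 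Two small cosmetic points: the claim ``$\Delta W\to-\infty$ near the origin'' is only true for $b<2$, but for $b\ge 2$ the Laplacian is anyway bounded near $0$, so $(\Delta\tilde W)_+\in L^\infty$ holds in all cases; and the steps you leave implicit (the identity $r_2'(t)=\hat v(t,r_2(t))$ via characteristics of the Lipschitz velocity field, the continuation argument keeping $r_2\le R^*<2R^*$, and the fact that coincidence of the velocity fields and their divergences on the support suffices to conclude that $\tilde\rho$ solves \eqref{pdes1}) are treated at the same level of detail in the paper, so there is no gap.
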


\begin{theorem}[Sharp radial stability-instability for spherical shells]\label{theorem:instabilitypowers}
Assume that $W$ is a power law potential as in \eqref{power}.
Then, there exists a unique $R_{ab}>0$ given by
\[
   R_{ab}=\frac{1}{2}\left(\frac{\beta\p{\frac{b+N-1}{2},\frac{N-1}{2}}}{\beta\p{\frac{a+N-1}{2},\frac{N-1}{2}}}   \right)^{\frac{1}{a-b}}
\]
such that $\delta_{R_{ab}}$ is stationary solution to
\eqref{pdes1}. Moreover, the following properties hold:
\begin{enumerate}[(i)]
\item If $2-N<b\leq 3-N$ then $\omega\in C(\real_+^2)\cap
C^1(\real_+^2\setminus \D)$ and  for all $(R,R)\in {\mathcal D}$
we have
 $$
 \lim_{ \substack{(r,\eta)\to (R,R)\\(r,\eta) \notin {\mathcal D}}}  \frac{\partial \omega}{\partial r}(r,\eta)=+\infty.
 $$\label{enumerate:firstpoint}

\item If $b\in \p{3-N, \frac {3a-Na-10+7N-N^2}{a+N-3} }$ then $\omega$ is $C^1(\real_+^2)$ and
$$\partial_1\omega(R_{ab},R_{ab})>0.$$
\label{enumerate:secondpoint}

\item If $b\in \p{\frac {3a-Na-10+7N-N^2}{a+N-3},a }$ then $\omega$ is $C^1(\real_+^2)$ and
\begin{equation*}
\partial_1\omega(R_{ab},R_{ab})<0\quad \text{and} \quad(\partial_1\omega+\partial_2\omega)(R_{ab},R_{ab})<0.
\end{equation*}\label{enumerate:thirdpoint}
\end{enumerate}
As a consequence, if $b\in\p{2-N,\frac {3a-Na-10+7N-N^2}{a+N-3}}$
then $\delta_{R_{ab}}$ is unstable in the sense of {\rm
Theorem~\ref{fat-inst1}} and if $b\in\p{\frac
{3a-Na-10+7N-N^2}{a+N-3},a}$ then $\delta_{R_{ab}}$ is stable in
the sense of {\rm Theorem~\ref{stability2}}.
\end{theorem}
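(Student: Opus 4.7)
The plan is to exploit linearity of \eqref{omega-def2} to write $\omega = \omega_a - \omega_b$, where $\omega_c$ is the kernel of the pure power $|x|^c/c$, and reduce every integral on $\partial B(0,1)$ to an explicit Beta function via the parametrization of $y$ by the polar angle $\theta$ from $e_1$. Combined with $|e_1-y| = 2\sin(\theta/2)$, $(e_1-y)\cdot e_1 = 2\sin^2(\theta/2)$, $\sin^{N-2}\theta = 2^{N-2}\sin^{N-2}(\theta/2)\cos^{N-2}(\theta/2)$, and $\int_0^{\pi/2}\sin^{2p-1}\cos^{2q-1} = \tfrac12\beta(p,q)$, all the required integrals collapse to algebra in $a$, $b$, $N$.

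Point (i) is essentially automatic from Lemma~\ref{regsphere}. Near the origin $k'(r) \sim -r^{b-1}$, $k''(r) \sim -(b-1)r^{b-2}$, and $r^{-1}k'(r) \sim -r^{b-2}$, so the hypersurface-integrability thresholds give $\omega \in C$ iff $b > 2-N$ and $\omega \in C^1$ iff $b > 3-N$. Since $\Delta W = (a+N-2)|x|^{a-2} - (b+N-2)|x|^{b-2}$ is negative near $0$ (the $b$-term dominates with $b+N-2 > 0$), part (iii) of Lemma~\ref{regsphere} delivers the diagonal blow-up of $\partial_1\omega$ throughout $2-N < b \le 3-N$. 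For the stationary radius, the polar parametrization yields
\begin{equation*}
\omega_c(R,R) = -\frac{\sigma_{N-1}}{\sigma_N}\, R^{c-1}\, 2^{c+N-3}\, \beta\!\left(\frac{c+N-1}{2}, \frac{N-1}{2}\right),
\end{equation*}
and the equation $\omega_a(R,R) = \omega_b(R,R)$, being of the form $c_1 R^{a-b} = c_2$ with $c_1,c_2 > 0$, has as unique positive solution precisely the $R_{ab}$ of the statement.

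For (ii) and (iii), since $\omega(R_{ab}, R_{ab}) = 0$, the divergence identity \eqref{divdelta} gives $\partial_1\omega(R_{ab}, R_{ab}) = -(\Delta W * \delta_{R_{ab}})(R_{ab} e_1)$. Plugging $\Delta(|x|^c/c) = (c+N-2)|x|^{c-2}$ into the same polar computation, using the stationary relation $R_{ab}^{a-b}\, 2^{a-b} = \beta_b/\beta_a$ with $\beta_c := \beta(\tfrac{c+N-1}{2}, \tfrac{N-1}{2})$ to eliminate $R_{ab}^{a-2}$, and rewriting $\beta(\tfrac{c+N-3}{2}, \tfrac{N-1}{2}) = \tfrac{c+2N-4}{c+N-3}\beta_c$ via $\beta(p-1,q) = \tfrac{p-1+q}{p-1}\beta(p,q)$, I expect the sign of $\partial_1\omega(R_{ab}, R_{ab})$ to collapse to the sign of $F(b) - F(a)$, where $F(c) := (c+N-2)(c+2N-4)/(c+N-3)$. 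Shifting $c' = c+N-3$ rewrites $F(c) = c' + N + (N-1)/c'$, visibly convex on $(0,\infty)$; the equation $F(a) = F(b)$ with $a \ne b$ simplifies algebraically to $(a+N-3)(b+N-3) = N-1$, which, solved for $b$, is exactly $b = b^* := (3a-Na-10+7N-N^2)/(a+N-3)$. Convexity then pins the sign on either side of the threshold.

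The second inequality in (iii) is cheap: by the chain rule $(\partial_1 + \partial_2)\omega(R,R) = \tfrac{d}{dR}\omega(R,R)$, and differentiating the explicit formula for $\omega(R,R)$ and then substituting the stationary relation collapses the answer to $-\tfrac{\sigma_{N-1}}{\sigma_N}(a-b)\, R_{ab}^{b-2}\, 2^{b+N-3}\, \beta_b$, which is strictly negative since $a > b$, so condition {\bf (C2)} is automatic for this family. The stability/instability consequences then follow by quoting Theorems~\ref{fat-inst1} and \ref{stability2}; the long-range hypotheses \eqref{as1}--\eqref{as3} required by the latter are immediate from the explicit polynomial Beta-function formula for $\omega_c$. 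The main obstacle is the clean algebraic simplification that identifies the critical threshold as the rational curve $(a+N-3)(b+N-3) = N-1$; once this is isolated, every remaining step is a direct Beta-function manipulation or a direct application of the results already established in Sections~\ref{sec:2}--\ref{sec:4}.
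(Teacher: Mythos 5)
Your proposal is correct and lands on exactly the same bifurcation values, but the central sign computation is organized differently from the paper's. The paper differentiates the explicit kernel \eqref{omega} and evaluates both $\psi_c(1)$ and $\psi_c'(1)$ as Beta integrals, reducing $\partial_1\omega(R_{ab},R_{ab})>0$ to $a-\psi_a'(1)/\psi_a(1)<b-\psi_b'(1)/\psi_b(1)$ with $\psi_c'(1)/\psi_c(1)=\tfrac12(c-2)(c+N-2)/(c+N-3)$, and then to a quadratic inequality in $b$ whose roots are $b=a$ and $b=b^*$. You instead invoke the divergence identity of Lemma~\ref{div-lemma} (i.e.\ \eqref{divdelta} together with {\bf (C0)}) to write $\partial_1\omega(R_{ab},R_{ab})=-(\Delta W*\delta_{R_{ab}})(R_{ab}e_1)$, which requires only the single surface integral of $|e_1-y|^{c-2}$, namely $\beta\left(\tfrac{c+N-3}{2},\tfrac{N-1}{2}\right)$; eliminating $R_{ab}$ through the stationarity relation collapses the sign to that of $F(b)-F(a)$ with $F(c)=(c+N-2)(c+2N-4)/(c+N-3)=c'+N+(N-1)/c'$, $c'=c+N-3$, and the factorization $F(b)-F(a)=(b'-a')\bigl(a'b'-(N-1)\bigr)/(a'b')$ identifies the critical curve as $(a+N-3)(b+N-3)=N-1$, which is precisely $b=b^*$. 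The two reductions are consistent: one checks $F(c)=2\bigl(c-\psi_c'(1)/\psi_c(1)\bigr)+2(N-2)$, so the orderings coincide. Your route avoids computing $\psi_c'(1)$ and makes the $a\leftrightarrow b$ symmetry and the sign discussion of the threshold transparent; it does require evaluating $\Delta W*\delta_{R}$ on the sphere itself, but for $b>3-N$ this is licensed by Lemma~\ref{regsphere} (and Lemma~\ref{reg2}), so nothing is lost. Part (i), the derivation of $R_{ab}$, the computation $(\partial_1+\partial_2)\omega(R_{ab},R_{ab})=(b-a)R_{ab}^{b-2}\psi_b(1)<0$, and the final appeal to Theorems~\ref{fat-inst1} and~\ref{stability2} coincide with the paper's argument; note that, exactly like the paper, you do not verify in detail the sign hypotheses $\omega(\cdot,R_{ab})>0$ on $(0,R_{ab})$, $\omega(\cdot,R_{ab})<0$ on $(R_{ab},\infty)$, nor the growth conditions \eqref{as1}--\eqref{as3} of Theorem~\ref{stability2} (you at least flag the latter), so your level of rigor in that closing step matches the paper's.
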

\begin{remark}
Note that indeed  for $3-N<a$ we have
$$
3-N<\frac {3a-Na-10+7N-N^2}{a+N-3}<a.
$$
\end{remark}
\begin{remark}
In \cite{KSUB} the authors study the dynamic of a curve evolving
in $\real^2$ according to the aggregation equation. They perform a
linear stability analysis of the spherical shell steady state. They
consider not only radially symmetric perturbations but also
perturbation which break the symmetry of the spherical shell. The
mode $m=\infty$ corresponds to a perturbation which preserve the
symmetry of the spherical shell.  Using a computation involving the
Gamma function, they show that the mode $m=\infty$ is stable if
and only if $(a-1)(b-1)>1$. In order to prove {\rm (ii)} we will
perform similar type of computations involving special functions.
Note that, when $N=2$
\[
  \frac {3a-Na-10+7N-N^2}{a+N-3}=\frac{a}{a-1}
\]
so \eqref{enumerate:secondpoint} is equivalent to  $(b-1)(a-1)>1$
and we recover the  condition derived in \cite{KSUB}.
\end{remark}
As a summary of all the stability and instability results for a
$\delta_{R_{ab}}$ stationary states for power law potentials we
show the bifurcation diagram in Figure~\ref{fig:planes}. For
powers inside the region between $b=2-N$ and the curve one has
instability of the $\delta_{R_{ab}}$. In the region above the
curve one has stability of the $\delta_{R_{ab}}$.

\begin{figure}[]
\scalebox{0.35}{\includegraphics{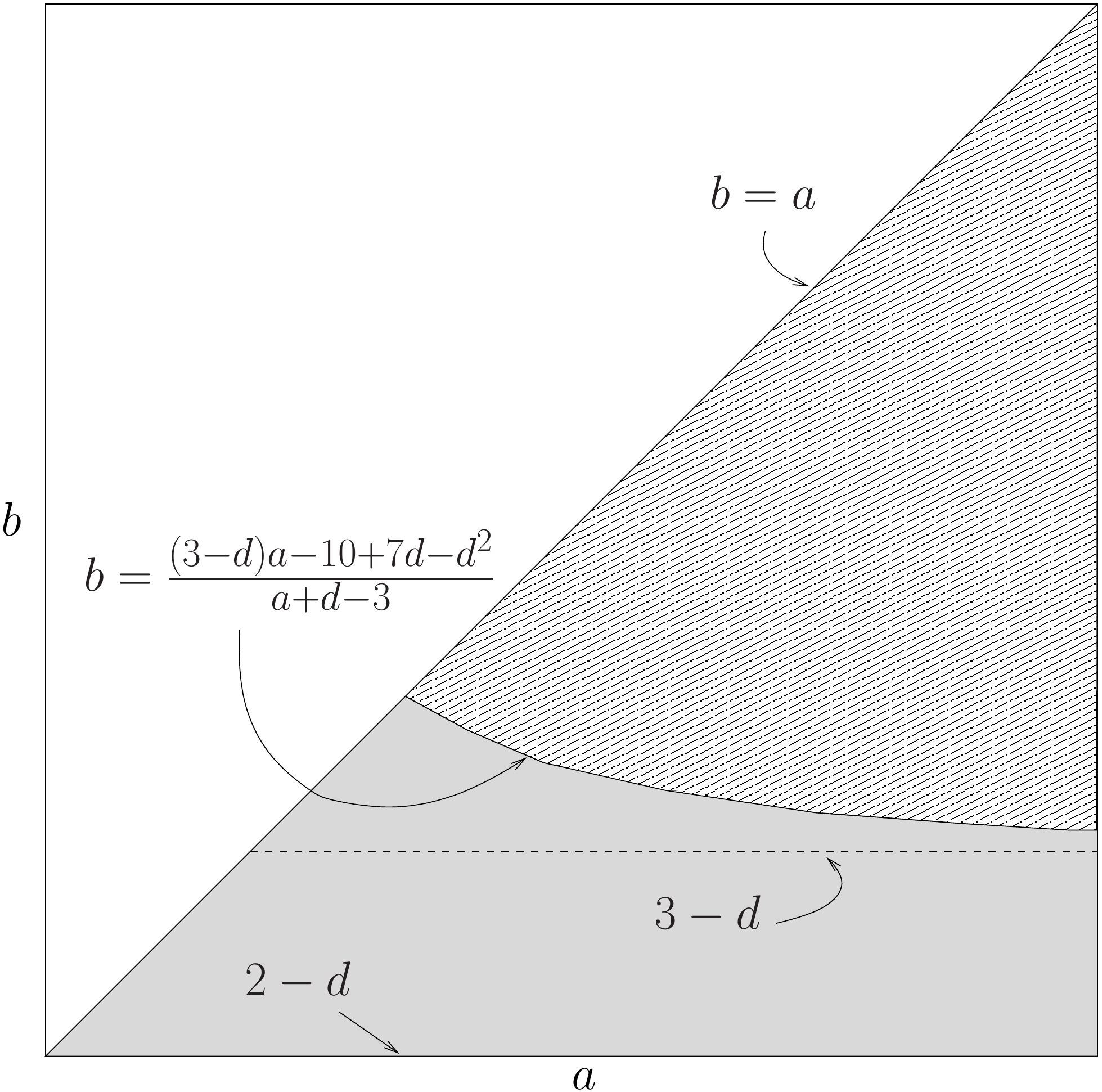}}
\caption{Bifurcation diagram. Stability and instability regions
for $\delta_{R_{ab}}$.} \label{fig:planes}
\end{figure}

%%%%%%%%%%%%%%%%%%%%%%%%%%%%%%%%%%%%%%%%%%%%%%%%%%%%%
\subsection{Proof of Theorem~\ref{theorem:globalpowers}}
In order to prove the global existence theorem we need to
introduce some notations. For potentials defined by \eqref{power},
the kernel $\omega(r,\eta)$ defined in \eqref{omega-def2} becomes
\begin{gather}
%\label{v}
%v(r)=-(\nabla K * \mu)= \int_0^{+\infty} \omega(\abs{x},\rho) \; d\hat{ \mu}( \rho) \ \frac{x}{\abs{x}}\\
\label{omega}
\omega(r,\eta)= r^{b -1} \psi_b (\eta/r)-r^{a -1} \psi_a (\eta/r) \\
\psi_a(s)= \frac{1}{\sigma_N}\int_{\partial B(0,s)} \frac{(e_1-sy)\cdot e_1}{\abs{e_1-s y}^{2-a}}  \;  d \sigma(y) \label{psi_a}
\end{gather}
The properties of the function $\psi_a(s)$ that we need are
summarized in the following lemma and can be found in \cite{DONG}.

\begin{lemma}[Properties of the function $\psi_a(s)$]\strut\label{lemma1}
The function $\psi_a$ is continuous with $\psi_a(0)=1$ and
$\lim_{s\to\infty} s^{2-a}\psi_a(s)=\frac{N+a-2}{N}$.
\end{lemma}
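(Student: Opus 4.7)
The three assertions are essentially independent, so I would treat them in turn. The value at zero is immediate: substituting $s=0$ into the definition collapses the integrand to $e_1\cdot e_1/|e_1|^{2-a} = 1$, giving $\psi_a(0)=\sigma_N^{-1}\int_{\partial B(0,1)} d\sigma = 1$. For continuity at any $s_0 \ne 1$, the quantity $|e_1 - sy|$ is bounded away from $0$ uniformly on $\partial B(0,1)$ for $s$ in a small neighborhood of $s_0$, so the integrand is jointly continuous and bounded there, and continuity of $\psi_a$ at $s_0$ follows from dominated convergence.

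The only delicate point is continuity at $s_0=1$. Here I would exploit two identities valid for $y\in\partial B(0,1)$: since $|y|=1$ implies $1-y_1 = |y-e_1|^2/2$, one has $(e_1 - sy)\cdot e_1 = (1-s) + s|y-e_1|^2/2$ and $|e_1 - sy|^2 = (1-s)^2 + s|y-e_1|^2$. Writing $u=1-s$ and $\rho=|y-e_1|$, this yields the uniform pointwise bound
\[
\left| \frac{(e_1-sy)\cdot e_1}{|e_1-sy|^{2-a}} \right| \le C\,(u^2+\rho^2)^{(a-1)/2}
\]
in a neighborhood of $(s,y)=(1,e_1)$. Using that the surface measure on $\partial B(0,1)$ near $e_1$ satisfies $d\sigma \sim \rho^{N-2}\,d\rho\,d\omega$ and rescaling $\rho=|u|\tau$, one checks that $\int_{|y-e_1|<\epsilon}(u^2+\rho^2)^{(a-1)/2}\,d\sigma \le C\epsilon^{a+N-2}$ uniformly for $u$ near $0$, provided $a>2-N$. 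This tail estimate tends to $0$ as $\epsilon\to 0$, which together with pointwise continuity of the integrand away from $y=e_1$ gives $\psi_a(s)\to\psi_a(1)$ as $s\to 1$.

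For the asymptotic limit I would factor $s^{a-2}$ out of the denominator: since $|e_1-sy|^2 = s^2-2sy_1+1$, one has $s^{2-a}|e_1-sy|^{a-2} = (1-2y_1/s+1/s^2)^{(a-2)/2}$, hence
\[
s^{2-a}\psi_a(s) = \frac{1}{\sigma_N}\int_{\partial B(0,1)}(1-sy_1)\bigl(1-2y_1/s+1/s^2\bigr)^{(a-2)/2}\,d\sigma(y).
\]
Taylor expanding the second factor (valid for large $s$ because $|{-2y_1/s+1/s^2}|<1$) yields $1-(a-2)y_1/s + O(1/s^2)$ uniformly in $y$. Multiplying by $(1-sy_1)$ and noting $s\cdot(1/s^2)=O(1/s)$ gives
\[
(1-sy_1)\bigl(1-2y_1/s+1/s^2\bigr)^{(a-2)/2} = 1 - sy_1 + (a-2)y_1^2 + O(1/s),
\]
uniformly in $y$. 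Integrating over $\partial B(0,1)$ and invoking the spherical identities $\int y_1\,d\sigma=0$ and $\int y_1^2\,d\sigma = \sigma_N/N$, the $-sy_1$ term vanishes and the rest gives $\sigma_N + (a-2)\sigma_N/N + O(1/s)$, so dividing by $\sigma_N$ and letting $s\to\infty$ produces the limit $(N+a-2)/N$.

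The main obstacle will be the continuity at $s=1$: when $a<2$ the integrand genuinely blows up there, and one must use the quadratic vanishing $1-y_1=|y-e_1|^2/2$ together with the matching lower bound on $|e_1-sy|^2$ to get the right homogeneous bound $(u^2+\rho^2)^{(a-1)/2}$; the hypothesis $a>2-N$ is exactly what makes the rescaling argument yield a uniform tail estimate. The asymptotic step is subtle only in that one cannot pass $s\to\infty$ inside the integrand pointwise (the term $-sy_1$ diverges), so the integration must be performed before the limit and the argument relies on the cancellation $\int y_1\,d\sigma=0$ from spherical symmetry.
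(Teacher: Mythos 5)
Your proof is correct, and it is worth noting that the paper itself does not prove this lemma at all: it defers the properties of $\psi_a$ to the reference \cite{DONG}, and the related explicit work the authors do themselves (in the proof of Theorem~\ref{theorem:instabilitypowers}) is carried out from the one-dimensional representation \eqref{psi_beta} via Beta- and Gamma-function identities. Your argument is therefore a genuinely different, self-contained elementary route: direct estimates on the spherical integral rather than reduction to special functions. The key steps all check out. The identities $(e_1-sy)\cdot e_1=(1-s)+s|y-e_1|^2/2$ and $|e_1-sy|^2=(1-s)^2+s|y-e_1|^2$ do give the bound $C(u^2+\rho^2)^{(a-1)/2}$ for $s$ near $1$, and its surface integral over $\{|y-e_1|<\epsilon\}$ is $O(\epsilon^{a+N-2})$ uniformly in $u$ precisely because $a>2-N$, which is exactly the range in which the lemma is invoked (both exponents satisfy $2-N<b<a$); this also shows $\psi_a(1)$ itself is finite, so the split into a uniformly small piece near $y=e_1$ and a continuous piece away from it closes the continuity argument at $s=1$. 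For the asymptotics, the expansion $(1-2y_1/s+1/s^2)^{(a-2)/2}=1-(a-2)y_1/s+O(1/s^2)$ is uniform in $y$ since $|y_1|\le 1$, the product with $(1-sy_1)$ produces $1-sy_1+(a-2)y_1^2+O(1/s)$ uniformly, and the exact cancellation $\int_{\partial B(0,1)}y_1\,d\sigma=0$ together with $\int_{\partial B(0,1)}y_1^2\,d\sigma=\sigma_N/N$ yields $1+(a-2)/N=(N+a-2)/N$; your observation that one must integrate before letting $s\to\infty$ is exactly the right caution. One small reading point: formula \eqref{psi_a} in the paper literally writes the integration domain as $\partial B(0,s)$, but as the equivalent form \eqref{psi_beta} and the value $\psi_a(0)=1$ show, the intended domain is the unit sphere, and you interpreted it correctly. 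What the paper's (cited) special-function route buys is closed-form expressions for $\psi_a(1)$ and $\psi_a'(1)$ needed later for the bifurcation diagram; what your route buys is a short proof of exactly the qualitative facts stated in the lemma without any hypergeometric machinery.
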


The main difficulty we have to cope with is the growth at infinity
of the attractive part which restricts the range of direct
application of Theorem \ref{classicalexistence}.

\begin{proof}[Proof of Theorem~\ref{theorem:globalpowers}]
Due to translational invariance we can assume without loss of
generality that the center of mass is located at zero. We write
$W$ as $W(x)=W_R(x)+W_A(x)$ where $W_A(x)=\frac{|x|^a}{a}$ is the
attractive part and $W_R(x)=-\frac{|x|^b}{b}$ is the repulsive
part. In addition, since $W$ is radially symmetric, that is
$W(x)=k(|x|)$ then we define $k(r):=k_R(r)+k_A(r)$. Finally, we
write $\omega_A(r,\eta)=r^{a-1}\psi_a(\eta/r)$ and
$\omega_R(r,\eta)=r^{b-1}\psi_b(\eta/r)$ with
$\omega(r,\eta)=\omega_R(r,\eta)-\omega_A(r,\eta)$.

\

{\it Step 1: A priori estimates on the support of $\hat \rho$.}
Suppose that $\hat \rho$ is a smooth radially symmetric solution
for the equation \eqref{pdes1} with compactly supported initial
data $\rho_0\in \mathcal{W}^{2,\infty}(\real^N)$. Since the
solutions belong to
$\mathcal{W}_{loc}^{1,\infty}(\real_+,\mathcal{W}^{1,\infty}(\real^N))$
and are compactly supported, the velocity field $\hat v$ is
Lipschitz continuous in time and space and the characteristics are
well defined. Thus $\hat v$ generates a $C^1$ flow map $r(t,r_0)$,
$t\in [0,T]$, $r_0\in \real_+$:
\begin{align*}
\frac{d}{dt}r(t)&=\hat v(t,r(t,r_0)),\\
r(0,r_0)&=r_0.
\end{align*}
Let us define by $r_2(t)$ the characteristic curve starting at point $r_2(0)=\max\lbrace{\rm supp}(\hat \mu_0)\rbrace$.
Then,
\begin{align}\label{support_radial}
\frac{d}{dt}r_2(t)&=\hat v(t,r_2(t))=\int_{0}^{\infty}\omega(r_2(t),\eta)d\hat{\mu}_t(\eta)=\int_{0}^{r_2(t)}\omega(r_2(t),\eta)d\hat{\mu}_t(\eta)\nonumber\\
&=-\int_{0}^{r_2(t)}r_2(t)^{a-1}\psi_a(\eta/r_2(t))d\hat{\mu}_t(\eta)+\int_{0}^{r_2(t)}r_2(t)^{b-1}\psi_b(\eta/r_2(t))d\hat{\mu}_t(\eta),
\end{align}
where we have used the expression of $\omega$ given in \eqref{omega}.
Here $\hat\mu_t$ denotes the measure with density $\hat\rho_t$. Using
 the properties of $\psi_a$ in Lemma~\ref{lemma1} in \eqref{support_radial} we obtain the following inequality:
\begin{align}\label{globalexistenceineq}
\frac{d}{dt}r_2(t)\leq K_b r_2(t)^{b-1}-K_a r_2(t)^{a-1},
\end{align}
\text{where}
\begin{equation*}
\begin{cases}
K_a=1 \qquad\quad\text{and} \quad K_b=\psi_b(1),& \text{if $2\leq b<a$,}\\
K_a=1 \qquad\quad\text{and} \quad K_b=1,& \text{if $2-N< b<2\leq a$,}\\
K_a=\psi_a(1)\quad \,\text{and} \quad K_b=1,& \text{if $2-N< b<a<2$.}
\end{cases}
\end{equation*}
Defining $\tilde
R_{ab}:=\left(\frac{K_a}{K_b}\right)^{\frac{1}{b-a}}$ and
rewriting \eqref{globalexistenceineq} as
\[
   \frac{d}{dt} r_2(t)\leq r_2(t)^{a-1}(K_b r_2(t)^{b-a}-K_a)
\]
one realizes that $r_2(t)\leq \overline{R}:=\max(r_2(0), \tilde
R_{ab})$ which proves that the $\text{supp}(\hat \mu_t)$ is
bounded and contained in $B(0,\overline{R})$ for all times.

\

{\it Step 2: Global existence.} Given $0< \varepsilon <1$,
consider $\chi_{\varepsilon}(r)$ a $C^\infty(0,\infty)$ decreasing
function with $0<\varepsilon<1$ such that
$\chi_{\varepsilon}(r)=1$ if $0<r<1/\varepsilon$ and
$\chi_{\varepsilon}(r)=0$ if $r>1+1/\varepsilon$. Define
$f^\varepsilon(r):=\chi_{\varepsilon}(r)\cdot k_{A}'(r)\in
L^1(0,\infty)$, $k_{A}^{\varepsilon}(r):=\int_{0}^{r}f^\varepsilon(s)ds$ and
$k^{\varepsilon}(r)=k_{R}(r)+k_{A}^{\varepsilon}(r)$. Now, the
potential $W^\varepsilon(x)=k^\varepsilon(|x|)$ satisfies the
hypotheses of Theorem~\ref{classicalexistence} so we have
existence and uniqueness of classical solution
$\hat\rho^\varepsilon$ to \eqref{pde1r}-\eqref{pde2r} in $[0,T]$
with initial data $\hat\rho_0$. We denote by
$\hat\mu_t^\varepsilon$ the measure with density
$\hat\rho_t^\varepsilon$.

Consider $r_2=\max\lbrace {\rm supp}(\hat\mu _0)\rbrace$ and the
characteristic curve $r_2^\varepsilon(t)$ starting at point
$r_2=r_2(0)$. Computing the derivative with respect to time, one
has
\begin{align*}
\frac{d}{dt}r_2^\varepsilon(t)&=\int_{0}^{\infty}\omega^\varepsilon(r_2^\varepsilon(t),\eta)d\hat{\mu}^\varepsilon_t(\eta)=\int_{0}^{r_2^\varepsilon(t)}\omega^\varepsilon(r_2^\varepsilon(t),\eta)d\hat{\mu}^\varepsilon_t(\eta)\\
&=-\int_{0}^{r_2^\varepsilon(t)}\omega_A^\varepsilon(r_2^\varepsilon(t),\eta)d\hat{\mu}^\varepsilon_t(\eta)+\int_{0}^{r_2^\varepsilon(t)}r_2^\varepsilon(t)^{b-1}\psi_b(\eta/r_2^\varepsilon(t))d\hat{\mu}^\varepsilon_t(\eta)\\
&\leq C_b r_2^\varepsilon(t)^{b-1},
\end{align*}
where we have split the kernel $\omega^\varepsilon$ into its
attractive and repulsive parts, and we have used that
$\omega_A^\varepsilon\geq 0$. The constant $C_b$ depends on $b$.
The last inequality leads us to
\begin{equation}\label{inequalityR2}
r_2^\varepsilon(t)\leq \sigma(t):=(C_b (2-b) t + r_2^{2-b})^{\frac{1}{2-b}},
\end{equation}
which says that the solution exists at least up to time $T^\ast:=
\frac{1}{2}\min\left\lbrace
T,T_b\right\rbrace$.
where
$$T_b=
\begin{cases}
\frac{r_2(0)^{2-b}}{C_b(b-2)} & \text{if}\quad b>2,\\
+\infty & \text{if}\quad b\leq 2.
\end{cases}
$$
In addition, \eqref{inequalityR2} gives us a uniform estimate for
the support of $\rho^\varepsilon$ up to time $T^\ast$. Notice that
for all $t\leq T^\ast$ and $\varepsilon>0$ such that
$2\sigma(T^\ast)\varepsilon<1$,  then $\nabla
W^\varepsilon\ast\rho_t^\varepsilon=\nabla W\ast
\rho_t^\varepsilon$ for all $x\in{\rm supp}(\rho^\varepsilon_t)$
and all $t\in [0,T^\ast]$. As a consequence $\omega$ given by
\eqref{omega} and $\omega^\varepsilon$ associated to
$W^\varepsilon$ by \eqref{omega-def2} are equal in the set
$\lbrace(r,\eta)\,|\,(r,\eta)\in {\rm
supp}(\rho_t^\varepsilon)^2\rbrace$ for all $t\leq T^\ast$.
Therefore, we can write:

\begin{align*}
\frac{d}{dt}r_2^\varepsilon(t)&=   -\int_{0}^{r_2^\varepsilon(t)}\omega_A^\varepsilon(r_2^\varepsilon(t),\eta)d\hat{\mu}^\varepsilon_t(\eta)+\int_{0}^{r_2^\varepsilon(t)}r_2^\varepsilon(t)^{b-1}\psi_b(\eta/r_2^\varepsilon(t))d\hat{\mu}^\varepsilon_t(\eta)\\
&=-\int_{0}^{r_2^\varepsilon(t)}\omega_A(r_2^\varepsilon(t),\eta)d\hat{\mu}^\varepsilon_t(\eta)+\int_{0}^{r_2^\varepsilon(t)}r_2^\varepsilon(t)^{b-1}\psi_b(\eta/r_2^\varepsilon(t))d\hat{\mu}^\varepsilon_t(\eta)\\
&=-\int_{0}^{r_2^\varepsilon(t)}r_2^\varepsilon(t)^{a-1}\psi_a(\eta/r_2^\varepsilon(t))d\hat{\mu}^\varepsilon_t(\eta)+\int_{0}^{r_2^\varepsilon(t)}r_2^\varepsilon(t)^{b-1}\psi_b(\eta/r_2^\varepsilon(t))d\hat{\mu}^\varepsilon_t(\eta),
\end{align*}
and we can use the a priori estimates developed in Step 1. Then,
for all $t\leq T^\ast$  we can conclude that
$r_2^\varepsilon(t)\leq \overline{R}$. Now, let us take
$\varepsilon$ such that $2\varepsilon \overline{R}<1$. Therefore
$\nabla W^\varepsilon\ast \rho_t^\varepsilon=\nabla W\ast
\rho_t^\varepsilon$. For all $t\leq T^\ast$ in the support of
$\rho_t^\varepsilon$. By uniqueness $\rho_t^{\varepsilon}=:\rho_t$
for all $2\varepsilon \overline{R}<1$ and it is a classical
solution to \eqref{pdes1} with potential $W$. Summarizing, we have
shown the existence of solution in the time interval $[0,T^\ast]$
with $r_2(T^\ast)\leq \overline{R}$. Now, we can extend and repeat
this argument for a time step $\Delta
t:=\frac{1}{2}\min\p{1,\overline{T}_b}$, where $\overline{T}_b=\frac{\overline{R}^{2-b}}{C_b(b-2)}$ if $b>2$ or $\overline{T}_b=+\infty$ if $b\leq 2$,
obtaining a solution up to time $T^\ast+\Delta t$ such that
$r_2(t)\leq \overline{R}$ for all $t\in [0,T^\ast+\Delta t]$.
Since $\Delta t$ is independent of the initial data and
$\varepsilon >0$, then we can extend the solution for all times.

Finally, the a priori estimates on the support of $\hat\mu$ show
that the support of the solution remains compact for all times.
\end{proof}

%%%%%%%%%%%%%%%%%%%%%%%%%%%%%%%%
\subsection{Proof of the Theorem~\ref{theorem:instabilitypowers}}

It is first convenient to rewrite \eqref{psi_a} as:
\begin{equation}\label{psi_beta}
\psi_a(s)=\frac{\sigma_{N-1}}{\sigma_N}\int_{0}^{\pi}\frac{(1-s\cos\theta)(\sin\theta)^{N-2}}{(1+s^2-2s\cos\theta)^{\frac{2-a}{2}}}d\theta.
\end{equation}

We recall that $\omega(r,\eta)$ is the velocity at $r$ generated
by $\partial B(0,\eta)$. So a $\delta_R$ with $R>0$ is a steady
state if and only if $\omega(R,R)=0$, i.e.
\begin{equation} \label{Rab}
R=R_{ab }= \p{\frac{\psi_b (1)}{\psi_a (1)}}^{\frac{1}{a -b }},
\end{equation}
where we have used \eqref{omega}.

\begin{proof}[Proof of the Theorem~{\rm \ref{theorem:instabilitypowers}}]
The point \eqref{enumerate:firstpoint} is a direct consequence of Lemma~\ref{fat-inst1}. Let us prove  \eqref{enumerate:secondpoint}. From Lemma~\ref{fat-inst1}, see also \cite{DONG} it is clear that  $\omega \in C^1(\real_+^2)$ and we have
\begin{equation} \label{bibi}
\frac{\partial \omega}{ \partial r}(R_{ab },R_{ab })= R_{ab }^{b -2} \Big[ (b -1) \psi_b  (1)-  \psi'_b   (1)\Big]
-R_{ab }^{a -2} \Big[ (a -1) \psi_a  (1)- \psi'_a    (1) \Big]
\end{equation}
After some algebra, one easily get from \eqref{Rab} and \eqref{bibi} that  $\frac{\partial \omega}{ \partial r}(R_{ab },R_{ab })>0$ is equivalent to
\begin{equation}\label{stabilitycondition2}
a -\frac{\psi_{a }'(1)}{\psi_a (1)}<b -\frac{\psi_{b }'(1)}{\psi_b (1)}.
\end{equation}
Both $\psi_a(1)$ and $\psi_a'(1)$ can be expressed in terms of the Beta function. Recall that one of the expressions of the Beta function is:
$$
\beta(x,y)=2\int_0^{\pi/2} (\cos\theta)^{2x-1} (\sin\theta)^{2y-1} d\theta.
$$
We first compute $\psi_a(1)$. Using \eqref{psi_beta}:
\begin{align*}
\frac{\sigma_N}{\sigma_{N-1}}  \psi_{a }(1)&=\int_{0}^{\pi}\frac{(1-\cos\theta)}{A(1,\theta)^{2-a }}(\sin\theta)^{N-2}d\theta
=2^{\frac{a-2 }{2}}\int_{0}^{\pi}(1-\cos\theta)^{a /2}(\sin\theta)^{N-2} d\theta \nonumber\\
&= 2^{\frac{a-2 }{2}}\int_{0}^{\pi}(2 \sin^2\frac{\theta}{2})^{a /2}(2 \cos\frac{\theta}{2} \sin\frac{\theta}{2})^{N-2} d\theta \nonumber\\
&= 2^{a+N-3 }\int_{0}^{\pi}\p{ \sin\frac{\theta}{2}}^{a+N-2} \p{\cos\frac{\theta}{2}}^{N-2} d\theta\nonumber \\
&=2^{a+N-3} \beta\p{\frac{a+N-1}{2},\frac{N-1}{2}}%\label{psialpha}
\end{align*}
where we have used the fact that $A(1,\theta)=\sqrt{2(1-\cos \theta})$ and the identities
$1-\cos\theta=2\sin^2\frac{\theta}{2}$ and
$\sin\theta=2\cos\frac{\theta}{2}\sin\frac{\theta}{2}$.
Similarly we compute
\begin{align*}
\frac{\sigma_N}{\sigma_{N-1}}\frac{N-1}{(a-2)(a+N-2)}   \psi_{a }'(1)&=\int_{0}^{\pi}\frac{(\sin\theta)^N}{A(1,\theta)^{4-a }}\,d\theta
=\int_{0}^{\pi}\frac{(\sin\theta)^N}{(2(1-\cos \theta))^{\frac{4-a}{2} }}d\theta\nonumber\\
&=\int_{0}^{\pi}\frac{(2 \cos \frac{\theta}{2}\sin\frac{\theta}{2})^N}{(2(2\sin^2 \frac{\theta}{2}))^{\frac{4-a}{2} }}d\theta\nonumber\\
&=2^{N+a-4} \int_{0}^{\pi}\p{\cos \frac{\theta}{2}}^N\p{\sin\frac{\theta}{2}}^{N+a-4} d\theta\nonumber\\
&=2^{N+a-4} \beta\p{\frac{a+N-3}{2},\frac{N+1}{2}}%\label{psialphap}
\end{align*}
Note that since $a+N-3>0$  the Beta function is well defined.
If we compute the quotient  we obtain:
\[
   \frac{\psi_{a}'(1)}{\psi_{a}(1)}=\frac{1}{2}\frac{(a -2)(a +N-2)}{N-1}\frac{\beta\left(\frac{a+N-3}{2},\frac{N+1}{2}\right)}{\beta\left(\frac{a+N-1}{2},\frac{N-1}{2}\right)}.
\]
 At this point, we remind that $\beta(z,t)=\frac{\Gamma(z)\Gamma(t)}{\Gamma(z+t)}$. With this expression, the quotient can be simplified as
\[
   \frac{\psi_{a}'(1)}{\psi_{a}(1)}=\frac{1}{2}\frac{(a -2)(a +N-2)}{N-1}\frac{\Gamma\left(\frac{N+1}{2}\right)\Gamma\left(\frac{a+N-3}{2}\right)}{\Gamma\left(\frac{a+N-1}{2}\right)\Gamma\left(\frac{N-1}{2}\right)},
\]
and if we use that $\Gamma(z+1)=z\Gamma(z)$ then, the gamma quotient can be reduced to ${(N-1)}/{(a+N-3)}$ and then we obtain
\begin{equation*}%\label{simplifiedquotients}
\frac{\psi_{a}'(1)}{\psi_{a}(1)}=\frac{1}{2}\frac{(a -2)(a +N-2)}{a+N-3}.
\end{equation*}
Plugging the above expression into  \eqref{stabilitycondition2} and doing some algebra we deduce
$$
(a+N-3)b^2+ (N^2-7N+10-a^2)b - (N^2-7N+10)a - (N-3)a^2>0.
$$
The roots of the quadratic form are
$$
b=a \quad \text{and} \quad  b=\frac {3a-Na-10+7N-N^2}{a+N-3}
$$
which gives \eqref{enumerate:secondpoint}. To prove
\eqref{enumerate:thirdpoint} one just need to replace  the $<$
sign by a $>$ in \eqref{stabilitycondition2} to get the first
condition. For the second condition of stability, we can easily
compute $\frac{\partial\omega}{\partial\eta}$ from \eqref{omega}
and \eqref{bibi} to get
$$
\left(\frac{\partial \omega}{ \partial r}+\frac{\partial \omega}{
\partial \eta}\right)(R_{ab },R_{ab })= R_{ab }^{b -2} (b -1) \psi_b
(1) -R_{ab }^{a -2} (a -1) \psi_a  (1) \,.
$$
Now, using the definition of $R_{ab}$ in \eqref{Rab}, we finally
obtain
$$
\left(\frac{\partial \omega}{ \partial r}+\frac{\partial \omega}{
\partial \eta}\right)(R_{ab },R_{ab })= b-a< 0\,.
$$
The stated instability and stability are direct applications of
Theorem~\ref{fat-inst1} and Theorem~\ref{stability2} respectively.
\end{proof}

\begin{remark}
We want to point out that the general theory developed in the
previous sections is still working in dimension $N=1$ for even
solutions which correspond to the radially symmetric solutions in
higher dimensions. In the case $N=1$, and for even solutions, the
function $\psi_a$, corresponding to $W(x)=\frac{\abs{x}^a}{a}$ reads
\begin{equation*}%\label{psi1D}
\psi_a(s)=\frac{1}{2}\left[ \p{1-s}\abs{1-s}^{a-1}+\p{1+s}\abs{1+s}^{a-1}\right].
\end{equation*}
One can easily check that the properties of the function $\psi_a$
and $\omega$ for $W(x)=\frac{\abs{x}^a}{a}-\frac{\abs{x}^b}{b}$ in
$N=1$ are the same as in {\rm Lemma~\ref{lemma1}} and in {\rm
Theorem~\ref{theorem:instabilitypowers}}. The radius is
$R_{ab}=\frac{1}{2}$ whatever the powers are, see \eqref{Rab}.
{\rm Theorem~\ref{theorem:instabilitypowers}} applies: if $b\in
(1,2)$ then we are in the instability case
\eqref{enumerate:firstpoint} and if $b\in [2,a)$ we are in the
stability case. The curve which separates the instability and
stability regions in Figure~\ref{fig:planes} degenerates and
becomes the line $b=2$.

Moreover, in \cite{FellnerRaoul1,FellnerRaoul2} the authors proved
the existence of weak solutions and convergence, up to extractions
of subsequences, of $\rho(\cdot,t)$ for potentials like
$W(x)=\frac{\abs{x}^2}{2}-\frac{\abs{x}^b}{b}$, $b\in (0,1]$. They
also showed numerical simulations supporting the conjecture that
the stationary state
$\frac{1}{2}\p{\delta_{x=-1/2}+\delta_{x=1/2}}$ is unstable. Note
that our {\rm Theorem~\ref{theorem:instabilitypowers}}, only
applies in $N=1$ for $1<b<a$. Since global existence was proven in
\cite{FellnerRaoul1,FellnerRaoul2}, then our instability result
also applies in those cases. Summarizing, we can include $a=2$,
$b\in (0,1]$ in the instability regions using {\rm
Theorem~\ref{theorem:instabilitypowers}}.
\end{remark}

%=================================================

\section{Numerical results}
\label{sec:7}

In this section, we illustrate the previous results and get some
further conjectures for the instability cases. Our numerical code
is based on the inverse distribution function in radial
coordinates. As it was reminded in \eqref{pi2}, the equation for
the inverse distribution function reads
\begin{equation}\label{radialinverse}
\frac{\partial \varphi}{\partial
t}(t,\xi)=\int_{0}^{1}\omega(\varphi(t,\xi),\varphi(t,\tilde\xi))d\tilde\xi.
\end{equation}
A solution of \eqref{pde1r} converges to a Dirac mass if and only
if its pseudo inverse distribution becomes flat.

Numerical codes based on  \eqref{radialinverse} are then more
stable when dealing with mass concentration. We will then use a
backward Euler scheme in time coupled to a composite Simpson rule
to approximate the integral term, and solve the resulting
nonlinear system by the Newton-Raphson algorithm. Let us remark
that the convergence of the semi-discrete backward Euler scheme is
equivalent to the convergence of the JKO variational scheme for
\eqref{pdes1} (see \cite{BCC,MR2566595}). The convergence of the
semi-discrete backward Euler scheme is therefore known under
suitable conditions on the interaction potential, see \cite{CDFLS}
for details. All simulations are done for $N=2$.

\subsection*{{\bf Test case, total concentration at the origin: $W(x)=\frac{|x|^2}{2}$}}

In this case, \eqref{radialinverse} reduces to $\frac{\partial
\varphi}{\partial t}(t,\xi)+\varphi(t,\xi)=0$. To test our scheme,
we use this attractive potential for which the solution converges
exponentially fast to a total concentration at zero, that is to
$\bar\varphi\equiv 0$. See Figure~\ref{fig:onlyattractive}.

\begin{figure}[h]
\includegraphics[scale=0.55]{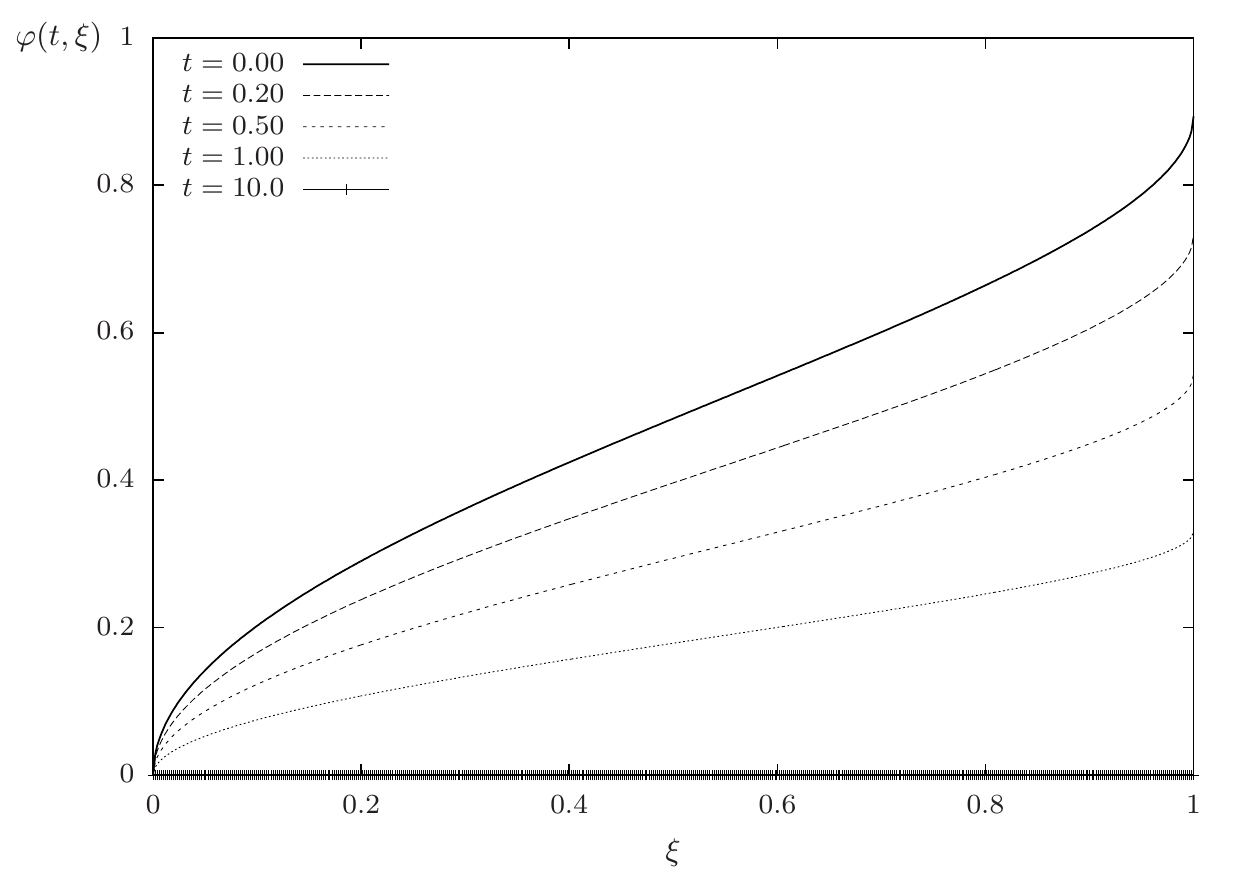}
\caption{Evolution of $\xi\mapsto\varphi(t,\xi)$ for
$W(x)=\frac{|x|^2}{2}$ towards total concentration at $0$.}
\label{fig:onlyattractive}
\end{figure}

\subsection*{Stability Case for the Spherical Shell: $W(x)=\frac{|x|^4}{4}-\frac{|x|^2}{2}$.}

In this case, we have an repulsive-attractive power law potential
with powers in the stability region of Figure~\ref{fig:planes}. We
thus expect that the mass will concentrate towards a spherical
shell, thanks to the results of
Theorem~\ref{theorem:instabilitypowers}. The radius of the
spherical shell can be computed using \eqref{Rab}:
\[
 R_{ab}=\left(\frac{\psi_2(1)}{\psi_4(1)}\right)^{\frac{1}{2}}=\frac{\sqrt{3}}{3}.
\]
For $b\geq 2$ and both $a$ and $b$ integers, one can compute
explicitly the expression for the velocity field $\omega(r,\eta)$,
which is a polynomial function, in our case
$\omega(r,\eta)=-r^3-2r\eta^2+r$. The evolution of $\varphi$ is
shown in Figure~\ref{fig:phiregular}. In
Figure~\ref{fig:phiregular} we also plot the velocity field
$r\mapsto\omega(r,R_{ab})$. Notice that $r\mapsto\omega(r,R_{ab})$
satisfies the conditions of Theorems~\ref{stability} and
\ref{stability2}: $\omega(R_{ab},R_{ab})=0$,
$\partial_1\omega(R_{ab},R_{ab})<0$,
$\textrm{sign}(\omega(r,R_{ab}))=\textrm{sign}(R_{ab}-r)$,
$\partial_1\omega(0,R_{ab})>0$.

\begin{figure}[h]
\includegraphics[scale=0.55]{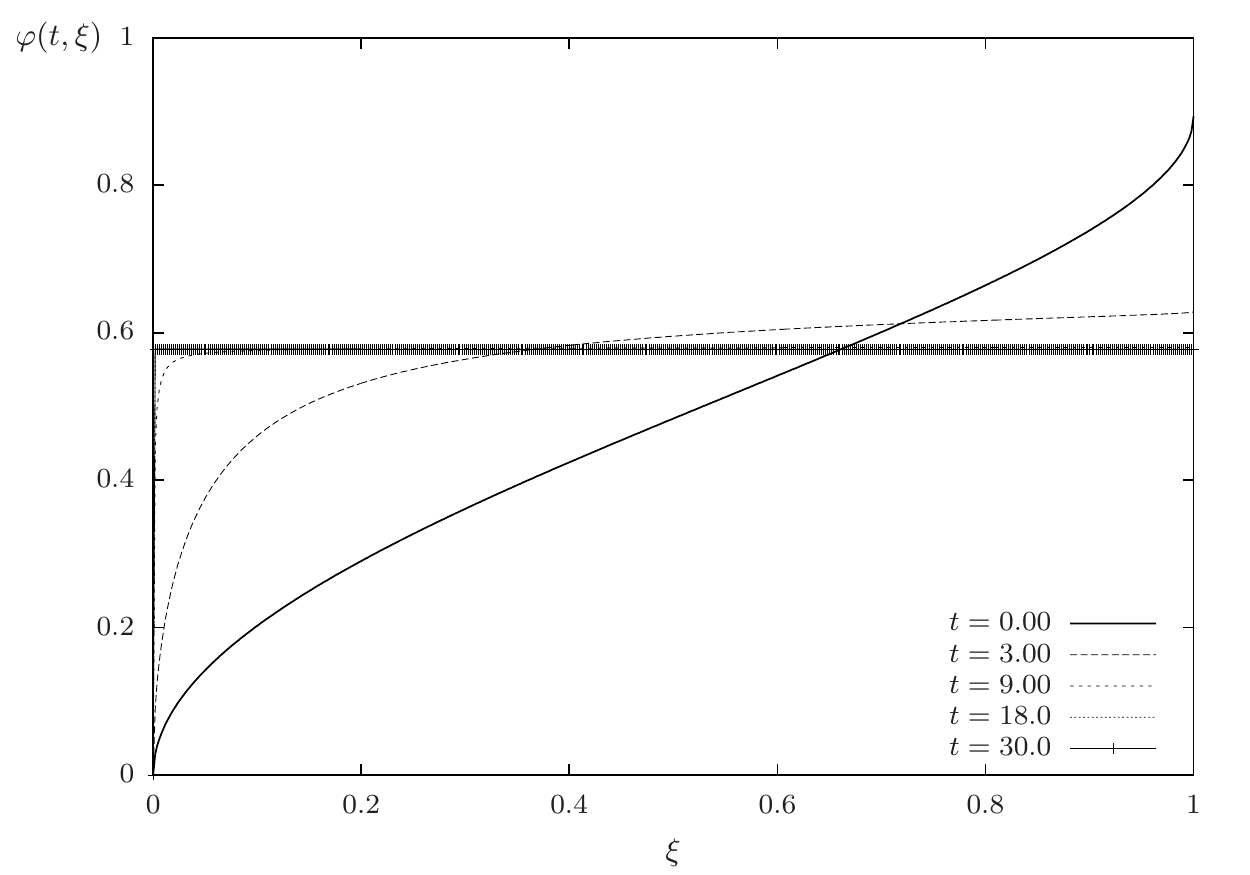}
\includegraphics[scale=0.55]{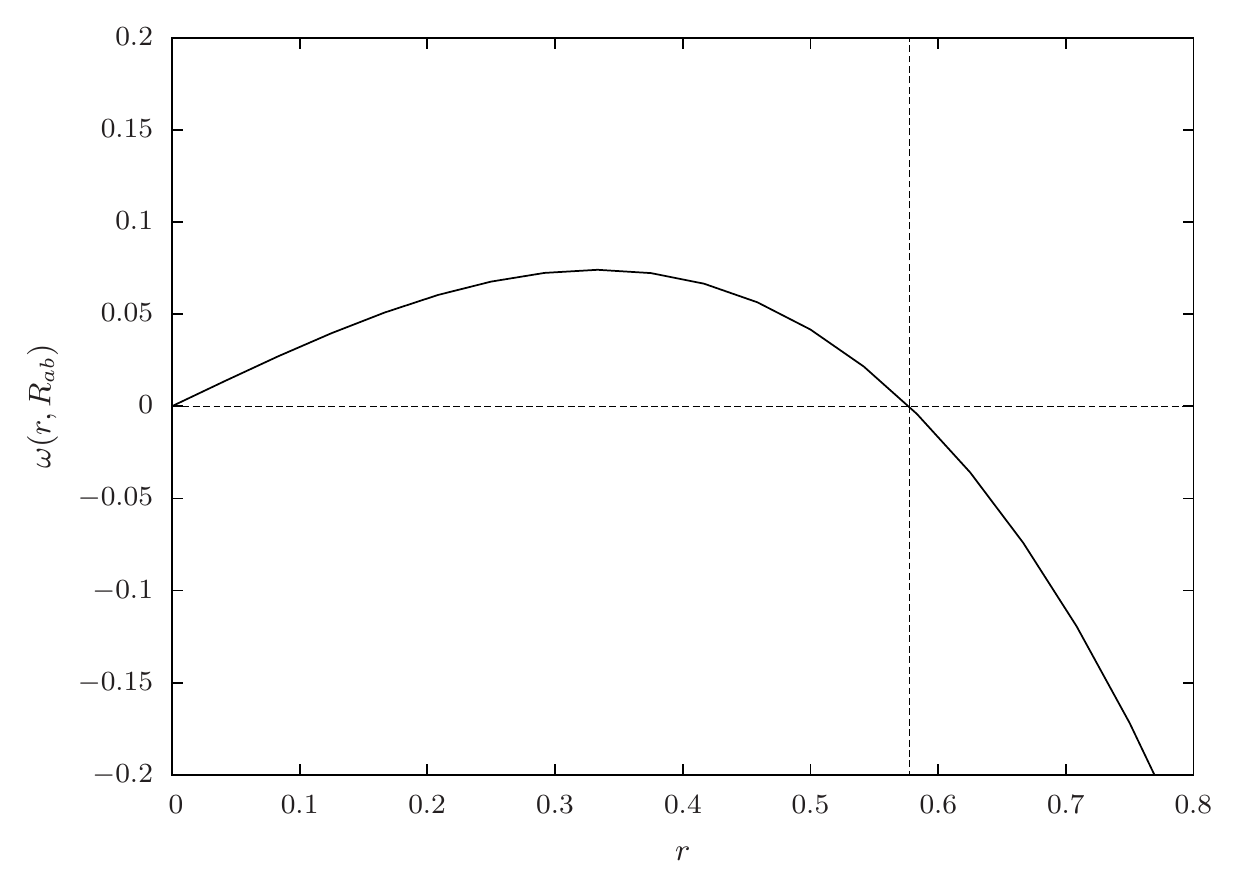}
\caption{Case $W(x)=\frac{|x|^4}{4}-\frac{|x|^2}{2}$. Left:
Evolution of $\xi\mapsto\varphi(t,\xi)$ towards the uniform
distribution on the sphere of radius $R_{ab}=\frac{\sqrt{3}}{3}$.
Right: Velocity field $r\mapsto\omega(r,\frac{\sqrt{3}}{3})$ with
the vertical line pointing out $R_{ab}$.} \label{fig:phiregular}
\end{figure}

\begin{figure}[h]
\includegraphics[scale=0.55]{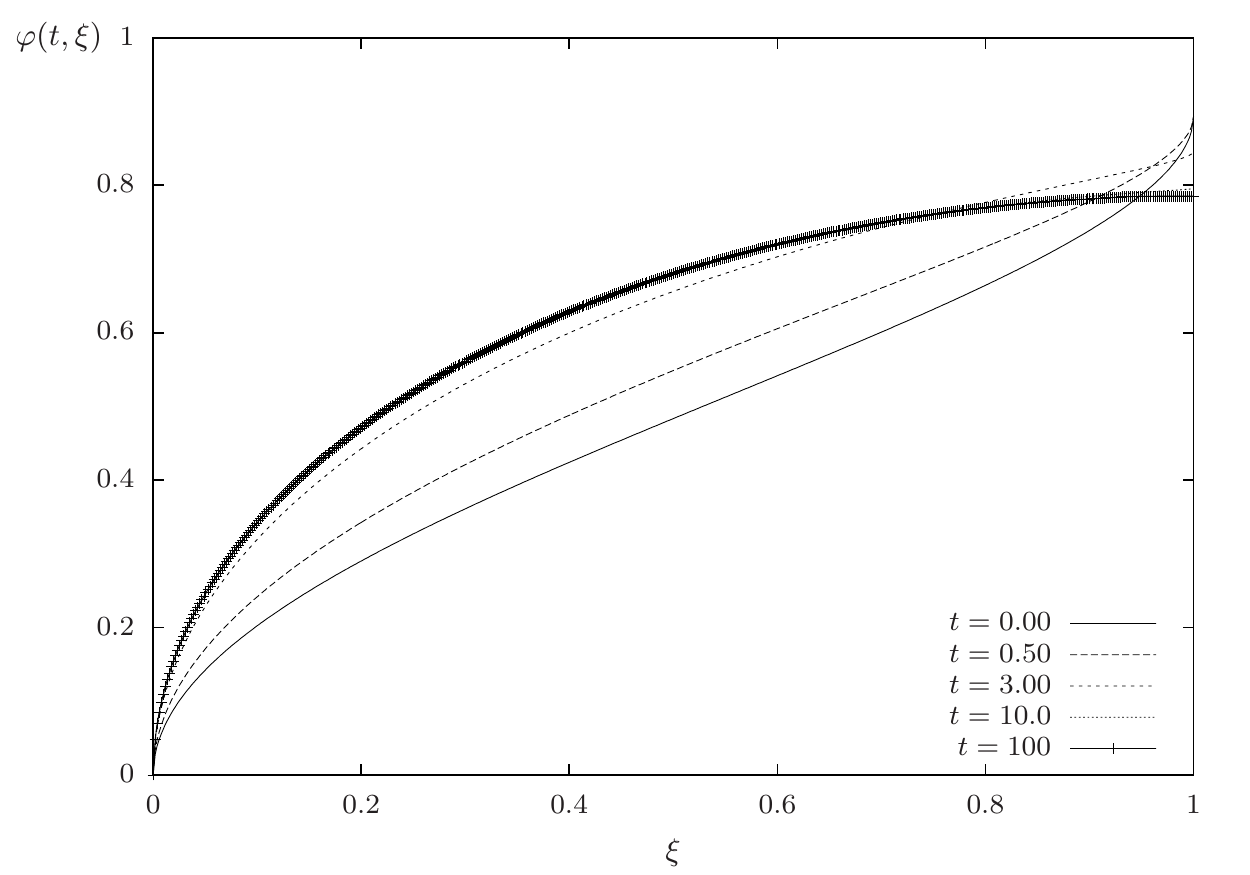}
\includegraphics[scale=0.55]{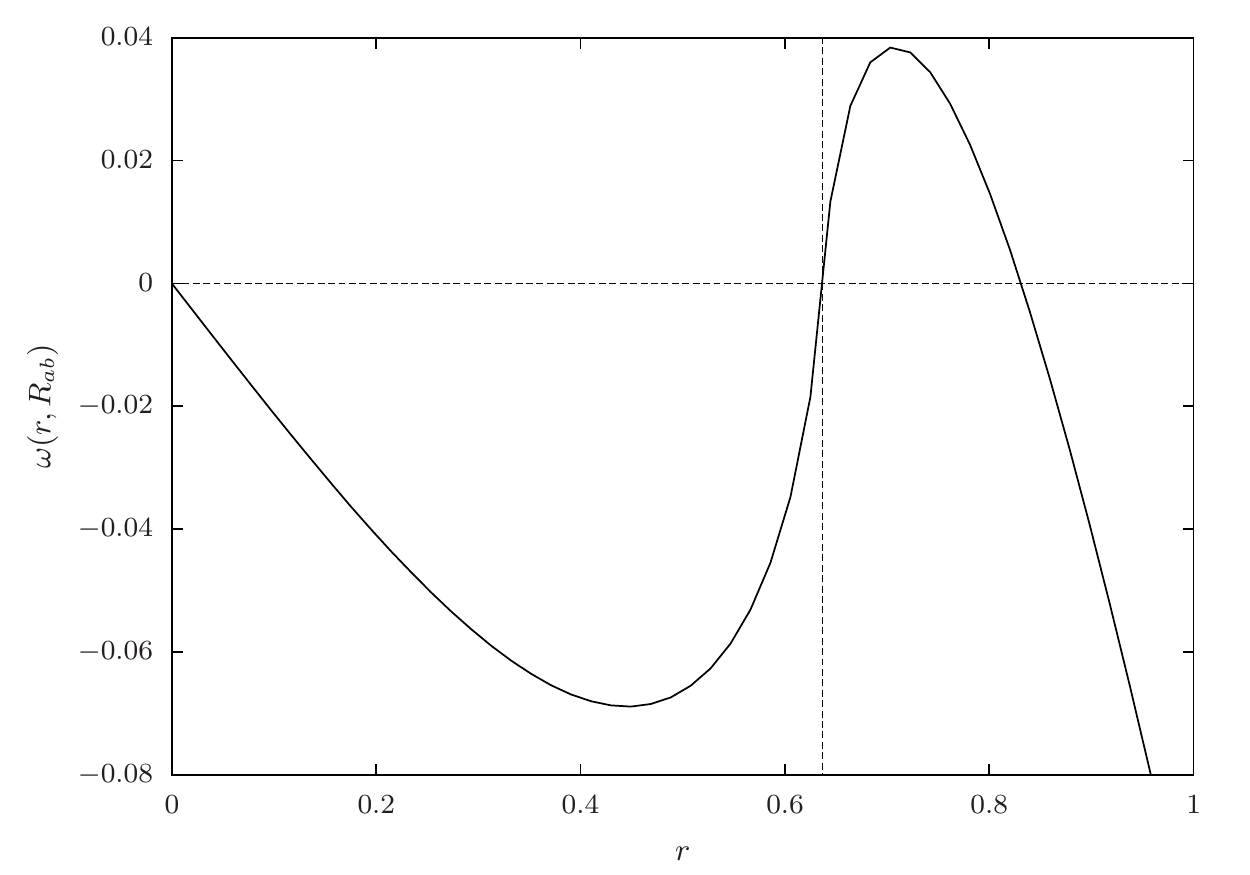}
\caption{Case $W(x)=\frac{|x|^2}{2}-|x|$. Left: Evolution of
$\xi\mapsto\varphi(t,\xi)$ towards a stationary profile, possibly
an integrable function. Right: Velocity field
$r\mapsto\omega(r,R_{ab})$ with the vertical line pointing out
$R_{ab}\sim 0.6366$.} \label{fig:singularfigure}
\end{figure}

\subsection*{Instability Case for the Spherical Shell: $W(x)=\frac{|x|^2}{2}-|x|$.}

In this case, the powers are in the instability region of
Figure~\ref{fig:planes}, below the curve $b=\frac{a}{a-1}$. Then,
due to the results in Theorem~\ref{theorem:instabilitypowers}, a
spherical shell is unstable. One can notice on
Figure~\ref{fig:singularfigure} that the function
$r\mapsto\omega(r,R_{ab})$ associated to the potential
$W(x)=\frac{|x|^2}{2}-|x|$ satisfies
$\partial_1\omega(R_{ab},R_{ab})>0$, so that the instability
condition of Theorem~\ref{fat-inst1} is indeed satisfied.

Figure~\ref{fig:singularfigure} shows that the solution seems to
converge to some stationary state which does not have any singular
part, i.e., possibly an integrable function. Numerically, this
behavior appears for any powers $a,\,b$ in the instability region
of Figure~\ref{fig:planes}. We conjecture that in this region
there exists integrable radial stationary states which are locally
stable under radial perturbations. This has already been proved in
the particular case of $b=2-N$ and $a\geq 2$ in \cite{FHK}. Some
numerical simulations using particle systems done in \cite{KSUB}
however suggest that these stationary states might be unstable for
non radial perturbations.

\subsection*{Energy dissipation}
We remind that the energy functional is given by
\begin{equation*}%\label{energy}
E[\rho](t)=\iint_{\real^N\times
\real^N}W(x-y)\rho(t,x)\rho(t,y)\,dy\,dx \,.
\end{equation*}
Using the polar change of coordinates $x=r\sigma$ and
$y=s\tilde{\sigma}$ and using the radial symmetry of
$\rho(t,\cdot)$, this energy writes:
\begin{equation*}
E[\hat{\rho}](t)=\frac{1}{2\sigma_N}\iint_{\real_+^2}\int_{\partial B(0,1)}W(r\sigma-se_1)\hat{\rho}(t,r)\hat{\rho}(t,s)d\sigma
\,ds\,dr \,.
\end{equation*}

\begin{figure}[h]
\includegraphics[scale=0.55]{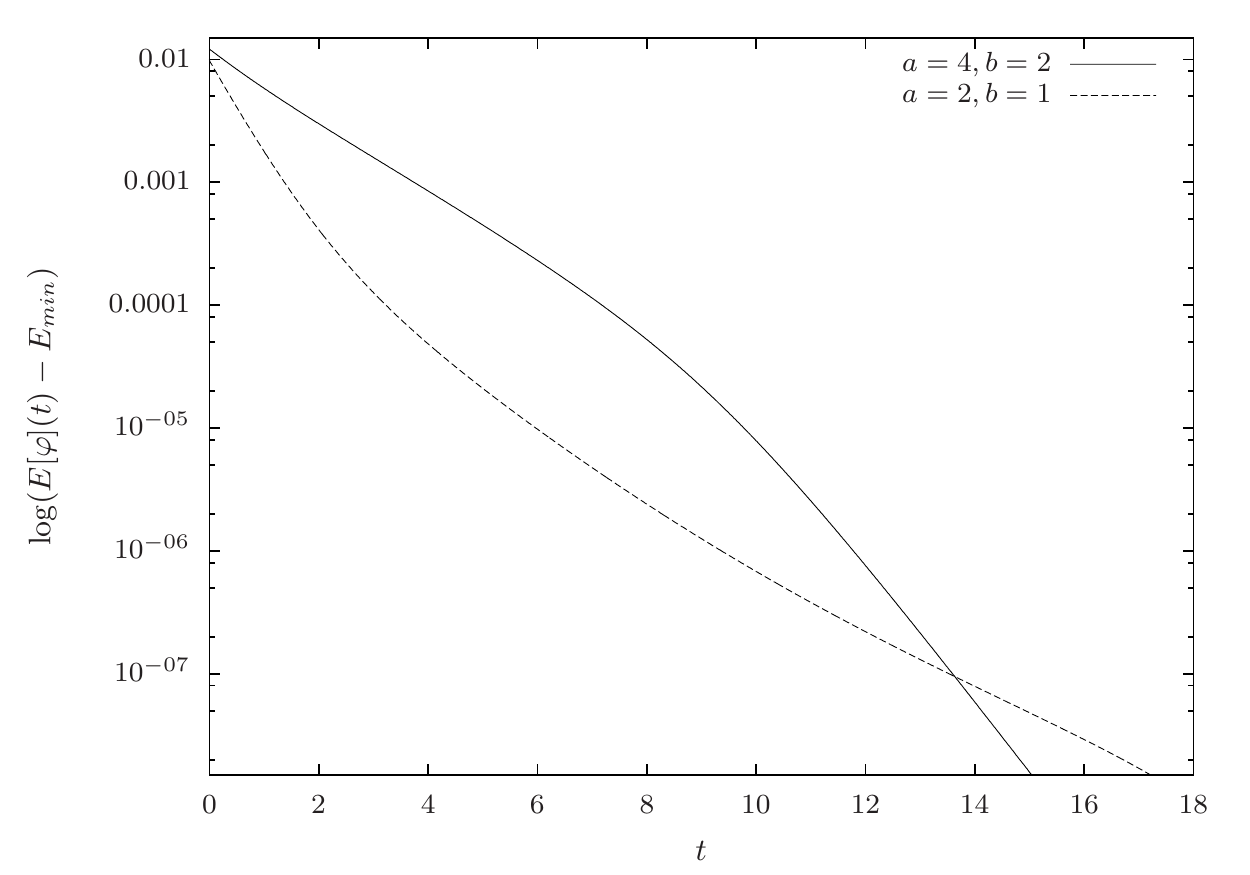}
\caption{Energy decay in logarithmic scale for the regular
repulsive-attractive potential, case $a=4$ and $b=2$ (solid line)
and for the singular repulsive-attractive potential, case $a=2$
and $b=1$ (dashed line). Note that $E_{min}$ is the numerical
limit of the energy as $t\to\infty$.} \label{fig:energy1decay}
\end{figure}

A formal calculation implies that the derivative w.r.t. time of
the energy is negative and given by
\[
\frac{d}{dt}E[\hat{\rho}](t)=-\int_{\real^+}\hat{\rho}(t,r)\hat{v}(t,r)^2
\,dr \, ,
\]
the energy should then decrease in time. Using radially
symmetric coordinates, the energy functional for the inverse
distribution function is given by
\begin{equation}\label{inverseenergy}
E[\varphi](t)=\frac{1}{2\sigma_N}\int_{0}^{1}\int_{0}^{1}\int_{\partial
B(0,1)}W(\varphi(t,\xi)\sigma-\varphi(t,\tilde\xi)e_1)d\sigma
d\tilde\xi d\xi \, .
\end{equation}
We have computed the energy using the formula
\eqref{inverseenergy} to check numerically, in each case, that the
energy decreases. In Figure~\ref{fig:energy1decay} we observe the
exponential decay of the energy for the two numerical examples
presented above for repulsive-attractive potentials.

%=================================================

\section{Appendix}

Let us start by some differential geometry facts. For the sake of
clarity, we first define the type of hypersurfaces we will work
with.

\begin{definition}\label{defmani}
$\mathcal M\subset\real^N$ is a $C^2$ hypersurface (manifold of
dimension $N-1$) if for any $\bar x\in \mathcal M$ there exists a
$C^2$ chart $(U,\varphi)$, i.e., a pair of an open connected set
and a $C^2$ diffeomorphism $\varphi: U \longrightarrow \real^N$,
with $\bar x\in U \subset \real^N$ such that $\varphi(\bar x)=0$
and $y\in \mathcal M\cap U$ if and only if $\varphi(y)\in
\{0\}\times \mathbb R^{N-1}$.
\end{definition}

We will need some technical result from differential geometry in
order to deal with the regularity of the function $\omega$ in
\eqref{omega-def2} and its generalizations to any compact
hypersurface. Note first that if $\mathcal M\subset\real^N$ is a
hyperplane then $M\cap\partial B(x,r)$ is a $N-2$ dimensional
sphere of radius $(r^2-\mbox{dist}(x,\mathcal M)^2)_+^{1/2}$ and
therefore its surface area is
$$
|\mathcal M\cap\partial B(x, r)|_{\mathcal H^{N-2}}=
\sigma_{N-1}(r^2-\mbox{dist}(x,\mathcal M)^2)_+^{\frac{N-2}{2}}
$$
where $\mathcal H^d$ is the d-dimensional Hausdorff measure, and
we remind that $\sigma_{N-1}$ is the surface area of the unit
sphere in $\real^{N-1}$.

The following result is a classical consequence of uniform graphs
lemmas in differential geometry. They state that a compact regular
hypersurface can be covered by graphs with bounds on their
derivatives depending only on the uniform bound of the second
fundamental form. We refer to \cite[Lemma 4.1.1]{PRminimal}. This
allows to show that the volume elements locally converge to those
of a hyperplane in a uniform manner.

\begin{lemma}\label{propmanifolds}
Let $\mathcal M\subset\real^N$ be  a $C^2$ compact hypersurface of
dimension $N-1$ immersed in $\real^N$. Then there exist small
enough $r_0>0$ and constants $C,\tilde C>0$ depending on the
global bound of the second fundamental form of $\mathcal M$ such
that for all $0<r\leq r_0$, and all $x\in \real^N$ with
$\mbox{dist}(x,\mathcal M)<r_0$
\begin{equation}\label{key}
\tilde C \,(r{^2}-\dist(x,\mathcal M)^{2})_+^{\frac{N-2}{2}}
\leq |\mathcal M \cap
\partial B(x, r)|_{\mathcal H^{N-2}} \leq C \,(r{^2}-\dist(x,\mathcal M)^{2})_+^{\frac{N-2}{2}}.
\end{equation}
\end{lemma}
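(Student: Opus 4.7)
The plan is to reduce the estimate to a local computation via the uniform graph lemma cited from \cite{PRminimal}, then compare the intersection $\mathcal M \cap \partial B(x,r)$ to the corresponding planar intersection. By compactness of $\mathcal M$ and its $C^2$ regularity, the second fundamental form is uniformly bounded, so there exist $r_1>0$ and $K>0$ such that for every $\bar{x}\in \mathcal M$, the piece $\mathcal M \cap B(\bar x, 2 r_1)$ can be written, in coordinates where $\bar x$ is the origin and $T_{\bar x}\mathcal M = \{y_N = 0\}$, as a graph $y_N = f(y')$, $y'=(y_1,\dots,y_{N-1})$, with $f(0)=0$, $\nabla f(0)=0$ and $\|D^2 f\|_{L^\infty}\leq K$.

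Fix $x\in \real^N$ with $d:=\dist(x,\mathcal M)< r_0$, where $r_0\leq r_1$ is to be chosen, and let $0<r\leq r_0$. If $r\leq d$ both sides of \eqref{key} vanish, so assume $r>d$. Let $\bar x\in \mathcal M$ achieve the distance from $x$ to $\mathcal M$; then $x-\bar x$ is orthogonal to $T_{\bar x}\mathcal M$. In the coordinates above one has $x=(0,\dots,0,d)$, and the Taylor bound $|f(y')|\leq \tfrac{K}{2}|y'|^2$ and $|\nabla f(y')|\leq K|y'|$ hold for $|y'|\leq 2r_1$. Choosing $r_0$ small enough guarantees $K r_0\leq \tfrac12$, so the graph is $\tfrac12$-Lipschitz and its surface element $\sqrt{1+|\nabla f|^2}$ lies in $[1,\tfrac{\sqrt 5}{2}]$ on the relevant region.

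In these coordinates, $\mathcal M \cap \partial B(x,r)$ is the image under $y'\mapsto(y',f(y'))$ of the level set $\{y' : F(y')=r^2\}$, where $F(y'):=|y'|^2+(f(y')-d)^2$. The area formula gives
\begin{equation*}
|\mathcal M\cap \partial B(x,r)|_{\mathcal H^{N-2}} = \int_{\{F=r^2\}} \frac{\sqrt{1+|\nabla f(y')|^2}}{|\nabla F(y')|/2}\; d\mathcal H^{N-2}(y').
\end{equation*}
A direct computation yields $\nabla F(y')=2y' + 2(f(y')-d)\nabla f(y')$, and because $|f(y')-d|\leq d+\tfrac{K}{2}r^2\leq 2r_0$ and $|\nabla f(y')|\leq Kr_0\leq\tfrac12$, one obtains the two-sided bound $|y'|\leq |\nabla F(y')|/2\leq (1+c(r_0))|y'|$ with $c(r_0)\to 0$ as $r_0\to 0$. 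The level set $\{F=r^2\}$ is, for $r_0$ small enough, a smooth hypersurface in $\real^{N-1}$ that is a $C^1$-small perturbation of the Euclidean sphere of radius $\rho:=\sqrt{r^2-d^2}$ (the planar case $f\equiv 0$), because $F(y')=|y'|^2+d^2 + O(r_0 r^2)$. Standard implicit-function/graph arguments then give $\mathcal H^{N-2}(\{F=r^2\})$ within a factor depending only on $K$ of $\sigma_{N-1}\rho^{N-2}=\sigma_{N-1}(r^2-d^2)^{(N-2)/2}$.

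Combining these uniform two-sided bounds with the surface-element control yields constants $\tilde C, C>0$ depending only on $K$ (hence only on $\mathcal M$) for which \eqref{key} holds. The main technical obstacle is keeping the comparison constants uniform in $x$ and $r$; this is handled by choosing $r_0$ so that the $C^1$ size of $f$ on the ball of radius $r_0$ is a fixed small fraction of $1$, which turns the perturbation of the sphere into a graphical perturbation with controlled Jacobian. Once this is in place, the upper and lower bounds in \eqref{key} follow by integrating the pointwise inequalities against the planar reference.
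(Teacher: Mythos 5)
Your overall strategy is exactly the one the paper intends (the paper gives no detailed proof; it simply invokes the uniform graph lemma of \cite{PRminimal} and the uniform comparison with the hyperplane case), and most of your ingredients are sound: the uniform graph representation, the two-sided control of $|\nabla F|$, and the observation that $\{F=r^2\}$ is a radial-graph perturbation of the sphere of radius $\rho=\sqrt{r^2-d^2}$ with $\mathcal{H}^{N-2}$-measure comparable to $\sigma_{N-1}\rho^{N-2}$. However, the displayed ``area formula'' is wrong, and this is the step on which the whole estimate rests. The quantity $\int_{\{F=r^2\}}\frac{\sqrt{1+|\nabla f|^2}}{|\nabla F|/2}\,d\mathcal{H}^{N-2}$ is (by the coarea formula in $\real^{N-1}$) the derivative with respect to $r^2$ of the $(N-1)$-dimensional area of $\mathcal M\cap B(x,r)$, not the $\mathcal{H}^{N-2}$-measure of the slice $\mathcal M\cap\partial B(x,r)$. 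A sanity check in the flat case $f\equiv 0$, $d=0$ shows the discrepancy: your formula yields $\sigma_{N-1}r^{N-3}$, whereas the true value is $\sigma_{N-1}r^{N-2}$. Since you have shown $|\nabla F|/2\approx|y'|\approx\rho$, combining your formula with $\mathcal{H}^{N-2}(\{F=r^2\})\approx\sigma_{N-1}\rho^{N-2}$ would literally give $\rho^{N-3}$, i.e.\ the wrong power of $(r^2-\dist(x,\mathcal M)^2)$. The correct identity is $|\mathcal M\cap\partial B(x,r)|_{\mathcal{H}^{N-2}}=\int_{\{F=r^2\}}\sqrt{1+|P\nabla f(y')|^2}\,d\mathcal{H}^{N-2}(y')$, where $P$ is the orthogonal projection onto the tangent space of the level set: the relevant Jacobian is that of the graph map restricted to the $(N-2)$-dimensional level set, and it lies in $[1,\sqrt{1+K^2r_0^2}]$. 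With this replacement, your estimate on $\mathcal{H}^{N-2}(\{F=r^2\})$ does yield \eqref{key}, so the gap is repairable, but as written the central computation fails.

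Two smaller points. First, the left inequality $|y'|\le|\nabla F(y')|/2$ is not correct as stated; you only get $(1-c(r_0))|y'|\le|\nabla F(y')|/2\le(1+c(r_0))|y'|$, which is all you need. Second, your identification of $\mathcal M\cap\partial B(x,r)$ with the graph-image of $\{F=r^2\}$ tacitly assumes that every point of $\mathcal M$ within distance $r+d\le 2r_0$ of the foot point $\bar x$ lies on that single graph; a bound on the second fundamental form alone does not exclude other nearby sheets of $\mathcal M$, so $r_0$ must also be taken small compared with the size of a tubular neighborhood (reach) of the compact hypersurface. The paper's statement is equally loose on this dependence, so this is a caveat rather than an error, but it is worth recording when you fix the area formula.
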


\begin{remark}\label{sphereremark}
Let us note that the previous Lemma is trivial in the case of
$\mathcal M= \partial B(0,\eta)$ for any $\eta >0$ since the
intersection of two $(N-1)$-dimensional spheres of different
radius is always a $(N-2)$-dimensional sphere lying on a
hyperplane. In fact, we can easily compute that if two spheres
$\partial B(0,\eta)$ and $\partial B(x, r)$  intersect, that is
$\abs{|x|-\eta} \le r$, then
$$
|\mathcal \partial B(0,\eta) \cap
\partial B(x, r)|_{\mathcal H^{N-2}}= \sigma_{N-1} r_1^{N-2}
$$
where $r_1=r_1( \eta,r, \dist(x, \partial B(0,\eta)))$ is
the radius of the intersection, which is computable:
\begin{equation*}
r_1=\eta\sqrt{1-\left(\frac
{|x|^2+\eta^2-r^2}{2|x|\eta}\right)^2}\sim
\sqrt{\frac\eta{|x|}}\sqrt{r^2-\dist(x, \partial B(0,\eta))^2},
\end{equation*}
as $r-\dist(x, \partial B(0,\eta))\to 0$. The constants
$r_0$, $C$, and $\tilde C$ of Lemma~{\rm \ref{sphereremark}} can then be
taken uniform for variations of the radius in bounded intervals,
i.e., for $0<\eta_1< \eta<\eta_2$.
\end{remark}

We now can deal with the continuity of the velocity fields
generated by probability densities concentrated on manifolds.
Recall that $\real_+=(0,+\infty)$.

\begin{lemma}\label{reg1}
Let $\mathcal M\subset \mathbb R^N$ be a compact $C^2$
hypersurface, $\mu$ a probability distribution such that $\bar
\mu=\phi \delta_{\mathcal M}$, where $\phi\in L^\infty(\mathcal
M)$, and $g\in C(\real^N/\{0\})$ a radially symmetric function
which is locally integrable on hypersurfaces. Then, the function
$$
\upsilon(x)= \int_{\real^N} g(x-y) \, d\bar \mu(y)
$$
is continuous in $x\in \real^N$. Moreover, the same results hold
while replacing $g(x)$ by a non-radially symmetric function $G\in
C(\real^N/\{0\})$ such that $|G(x)|\leq |g(x)|$, where $g$
satisfies the properties above.
\end{lemma}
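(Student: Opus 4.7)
The plan is to prove continuity of $\upsilon$ at an arbitrary $x_0\in\real^N$ by separating the potentially singular contribution near $y=x_0$ from the regular far-field contribution, and bounding the singular contribution uniformly via Lemma~\ref{propmanifolds} combined with the coarea formula on $\mathcal M$.

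If $x_0\notin\mathcal M$, compactness of $\mathcal M$ gives $d:=\dist(x_0,\mathcal M)>0$, so for $|x-x_0|\leq d/2$ the integrand $g(x-y)\phi(y)$ stays uniformly away from the singularity of $g$ and is bounded by a fixed constant depending on $\|\phi\|_{L^\infty(\mathcal M)}$ and $\sup_{d/2\leq|z|\leq\textrm{diam}(\mathcal M)+d/2}|g(z)|$; continuity follows from dominated convergence. The substantive case is $x_0\in\mathcal M$. Fix small $\varepsilon>0$ and decompose $\upsilon(x)=I_\varepsilon(x)+J_\varepsilon(x)$ with
\[
I_\varepsilon(x)=\int_{\mathcal M\cap B(x_0,\varepsilon)}g(x-y)\phi(y)\,d\sigma(y),\qquad J_\varepsilon(x)=\int_{\mathcal M\setminus B(x_0,\varepsilon)}g(x-y)\phi(y)\,d\sigma(y).
\]
For $|x-x_0|\leq\varepsilon/2$, the $J_\varepsilon$-integrand is bounded away from the singularity, so continuity of $J_\varepsilon$ at $x_0$ reduces to the first case. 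For $I_\varepsilon$, the inclusion $\mathcal M\cap B(x_0,\varepsilon)\subset\mathcal M\cap B(x,3\varepsilon/2)$ and $|g(x-y)|=|\hat g(|x-y|)|$ (or $|G(x-y)|\leq|\hat g(|x-y|)|$ in the non-radial case) yield
\[
|I_\varepsilon(x)|\leq\|\phi\|_{L^\infty(\mathcal M)}\int_{\mathcal M\cap B(x,3\varepsilon/2)}|\hat g(|x-y|)|\,d\sigma(y),
\]
and it suffices to bound this last integral uniformly in $x$ near $x_0$ by a quantity vanishing as $\varepsilon\to 0$.

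To bound it, I would apply the coarea formula to the Lipschitz function $\rho(y)=|x-y|$ on $\mathcal M$:
\[
\int_{\mathcal M}f(|x-y|)\,|\nabla^{\mathcal M}|x-y||(y)\,d\sigma(y)=\int_0^{\infty}f(s)\,|\mathcal M\cap\partial B(x,s)|_{\mathcal H^{N-2}}\,ds,\qquad f\geq 0.
\]
Since $\mathcal M$ is a compact $C^2$ hypersurface, the uniform graph representation underlying Lemma~\ref{propmanifolds} shows that the secant $y-x$ is tangent to $\mathcal M$ up to a curvature-controlled error of order $|y-x|^2$; consequently, $|\nabla^{\mathcal M}|x-y||(y)\geq 1/2$ for $y\in\mathcal M$ with $|x-y|\leq r_0$, uniformly in $x$ in a tubular neighborhood of $x_0\in\mathcal M$. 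Combined with the upper bound $|\mathcal M\cap\partial B(x,s)|_{\mathcal H^{N-2}}\leq Cs^{N-2}$ from Lemma~\ref{propmanifolds}, this gives
\[
\int_{\mathcal M\cap B(x,3\varepsilon/2)}|\hat g(|x-y|)|\,d\sigma(y)\leq 2C\int_0^{3\varepsilon/2}|\hat g(s)|\,s^{N-2}\,ds,
\]
the right-hand side tending to $0$ as $\varepsilon\to 0$ by the hypersurface integrability of $g$. A standard three-epsilon argument---choose $\varepsilon$ small so that $|I_\varepsilon(x)|,|I_\varepsilon(x_0)|<\eta/3$, then shrink $|x-x_0|$ to make $|J_\varepsilon(x)-J_\varepsilon(x_0)|<\eta/3$---yields $|\upsilon(x)-\upsilon(x_0)|<\eta$. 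The non-radial case $|G|\leq|g|$ is handled identically, since the pointwise bound transfers to $I_\varepsilon$ and continuity of $G$ on $\real^N\setminus\{0\}$ handles $J_\varepsilon$.

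The main technical obstacle is the lower bound $|\nabla^{\mathcal M}|x-y||\geq 1/2$, which is what turns the coarea identity into a genuine upper bound on $\int_{\mathcal M\cap B(x,r)}|\hat g(|x-y|)|\,d\sigma(y)$. This is really a quantitative statement that $\mathcal M$ looks flat on small scales, with constants depending only on a global bound for the second fundamental form---precisely the input that goes into Lemma~\ref{propmanifolds}.
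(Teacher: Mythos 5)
Your overall route is the same as the paper's: split $\upsilon$ into a contribution from a small ball around the singularity and a far-field part, obtain continuity of the far part from the continuity of $g$ away from the origin, and make the near part small \emph{uniformly in $x$} by combining the surface-measure bound \eqref{key} of Lemma \ref{propmanifolds} with the hypersurface integrability of $g$, concluding with a three-epsilon argument. The gap is in the step that is supposed to produce this uniform smallness. Your coarea argument rests on the claim that $\abs{\nabla^{\M}\abs{x-y}}\ge 1/2$ for $y\in\M$ with $\abs{x-y}\le r_0$, \emph{uniformly in $x$ in a tubular neighborhood of $x_0$}. That claim is false whenever $x\notin\M$: if $y^*$ is the point of $\M$ closest to $x$, then $x-y^*$ is normal to $\M$ at $y^*$, so the tangential gradient of $y\mapsto\abs{x-y}$ vanishes at $y^*$ and remains small on the whole cap $\{y\in\M:\ \abs{x-y}\le 2\,\dist(x,\M)\}$; in the flat model $\M=\{y_N=0\}$, $x=(0,\dots,0,d)$, one has $\abs{\nabla^{\M}\abs{x-y}}=\abs{y'}/\sqrt{\abs{y'}^2+d^2}$, arbitrarily small near the foot point. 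These off-manifold $x$ are exactly the points your argument must handle: continuity at $x_0\in\M$ requires $\abs{I_\varepsilon(x)}$ to be small for \emph{every} $x$ with $\abs{x-x_0}\le\varepsilon/2$, and such $x$ generically do not lie on $\M$. On that cap the coarea identity only comes with the upper bound $\abs{\nabla^{\M}\abs{x-y}}\le 1$, which gives the slicing inequality in the wrong direction, so your estimate $\int_{\M\cap B(x,3\varepsilon/2)}\abs{\hat g(\abs{x-y})}\,d\sigma\le 2C\int_0^{3\varepsilon/2}\abs{\hat g(s)}\,s^{N-2}\,ds$ is not justified precisely in the region where $\abs{x-y}$ is comparable to $\dist(x,\M)$.

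For comparison, the paper does not argue through the tangential gradient at all: it asserts directly the bound $\int_{B(x,\varepsilon)}\abs{g(x-y)}\,d\bar\mu(y)\le\norm{\phi}_{L^\infty(\M)}\int_0^\varepsilon\abs{\hat g(r)}\,\abs{\{y\in\M:\abs{y-x}=r\}}_{\mathcal H^{N-2}}\,dr$ for all $x$ in the tubular neighborhood $U$ and then invokes \eqref{key}; so you have put your finger on the genuinely delicate point, but the justification you propose does not close it. Your coarea computation is legitimate for $x\in\M$ (there the secant direction is nearly tangent for $\abs{x-y}\le r_0$, with an error controlled by the second fundamental form), and also on the portion $\abs{x-y}\ge 2\,\dist(x,\M)$ for general $x$; what is missing is a separate treatment of the remaining cap, e.g. via the uniform graph representation underlying Lemma \ref{propmanifolds}, or by exploiting additional structure of the kernels actually used in the paper (monotonicity of $\abs{\hat g}$ near the origin), since on that cap the desired bound is not a formal consequence of \eqref{key} and hypersurface integrability alone.
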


\begin{proof}
It is straightforward to check that $\upsilon(x)$ is continuous
for all $\bar x\notin \mathcal M$. Let $\bar x\in \mathcal M$ and let
$r_0$ be given by Lemma \ref{propmanifolds}. For
$0<\varepsilon<r_0$, let $\chi_\varepsilon\in C^\infty(\mathbb R_+)$ be a
cut-off function, such that $\chi_\varepsilon=1$ on
$[0,\varepsilon/2]$, and $\chi_\varepsilon=0$ on
$[\varepsilon,\infty)$. The function $\upsilon$ can then be
written as
\begin{align}
\upsilon(x) &= \int_{\mathcal M} g(x-y)\chi_\varepsilon(x-y)
\, d\bar\mu(y) + \int_{\mathcal M}
g(x-y)[1-\chi_\varepsilon(x-y)] \, d\bar\mu(y) \nonumber\\
&:= \upsilon_1^\varepsilon(x)+\upsilon_2^\varepsilon(x).
\label{cutoff}
\end{align}
It is clear that $\upsilon_2^\varepsilon$ is continuous on $x\in
\real^N$, since $g$ is continuous away from the origin and
$\mbox{\rm supp }(\bar \mu)=\mathcal M$ is compact. Moreover,
given the set $U=\{x\in \real^N : \dist(x,\mathcal
M)<r_0\}$, we can estimate for all $x\in U$
\begin{align*}
|\upsilon_1^\varepsilon(x)|&\le \int_{\real^N} |g(x-y)| \, \chi_\varepsilon(x-y) \, d\bar\mu(y)
\leq \int_{B(x,\varepsilon)} |g(x-y)|  \, d\bar\mu(y)\\
&\leq \|\phi\|_{L^\infty(\mathcal M)} \int_0^\varepsilon
|\hat{g}(r)| \; |\{y\in \mathcal M ; \,|y-x|=r \}|_{\mathcal H^{N-2}} \, dr
\leq C \|\phi\|_{L^\infty(\mathcal M)} \int_0^\varepsilon
|\hat{g}(r)| \;  r^{N-2} \, dr
\end{align*}
where \eqref{key} is used. Moreover, by construction
$v_1^\varepsilon (x)=0$ for all $x\notin U$ for $\varepsilon <
r_0$. Therefore, due to the integrability over hypersurfaces of
$g$, then
$$
\lim_{\varepsilon\to 0}
\|\upsilon_1^\varepsilon\|_{L^\infty(\real^N)} = 0 \, .
$$
This is enough to show the continuity of $\upsilon$ on $\mathcal
M$: for any $\delta>0$, there exists $\varepsilon>0$ such that
$\|\upsilon_1^\varepsilon\|_{L^\infty(\real^N)}\leq \frac \delta
2$. Since $\upsilon_2^\varepsilon$ is continuous, there exists
$\kappa>0$ such that
$|\upsilon_2^\varepsilon(x)-\upsilon_2^\varepsilon(\bar x)|\leq
\frac \delta 2$ if $|x-\bar x|\leq \kappa$. Then,
$|\upsilon(x)-\upsilon(\bar x)|\leq \delta $ if $|x-\bar x|\leq
\kappa$. The last part of the proof is an adaptation of the
previous arguments since the integral inside the norm is less or
equal than $v_1^{2\epsilon}$.
\end{proof}

Now, we want to obtain the continuity with respect to the
hypersurface for the velocity fields associated to measures
concentrated on them. We restrict to the case of spheres since we
only need this particular case. The proof uses the transport
distance $d_\infty$. We remind the reader that it is introduced in
Section 3.

\begin{lemma}\label{reg2}
Let $\mathcal M_\eta:=\partial B(0,\eta)$ and $\bar \mu_\eta
=\phi_\eta \delta_{\mathcal M_\eta}$ be probability measures such
that $\phi_\eta \in L^\infty(\mathcal M_\eta)$ with $0<\eta$. Let
$g\in C^1(\real^N\backslash\{0\})$ be a radially symmetric
function which is locally integrable on hypersurfaces. If the
functions $\phi_\eta$ are uniformly bounded in $\eta$ and
$d_\infty(\bar \mu_{\eta},\bar \mu_{\tilde \eta})\to 0$ as
$\eta-\tilde\eta\to 0$, then
\begin{equation*}
v(x, \eta)= \int_{\partial B(0,\eta)} g(x-y)  \, d\bar \mu_\eta(y)
\end{equation*}
is continuous in  $\real^N \times \real_+$. Moreover, the same
result holds while replacing $g(x)$ by a non-radially symmetric
function $G\in C^1(\real^N\backslash \{0\})$ such that $|G(x)|\leq
|g(x)|$ with the properties above.
\end{lemma}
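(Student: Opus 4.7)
The plan is to establish joint continuity at an arbitrary point $(\bar x, \bar\eta) \in \real^N \times \real_+$ via the triangle inequality
\begin{equation*}
v(x,\eta) - v(\bar x, \bar\eta) = [v(x,\eta) - v(x, \bar\eta)] + [v(x, \bar\eta) - v(\bar x, \bar\eta)],
\end{equation*}
where the second bracket is handled directly by Lemma~\ref{reg1} applied to the fixed measure $\bar\mu_{\bar\eta}$. The core of the argument is to control the first bracket uniformly in $x$ near $\bar x$ as $\eta\to \bar\eta$.

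Using the definition of $d_\infty$, I would pick a measurable map $\mathcal T_\eta:\real^N \to \real^N$ with $\mathcal T_\eta\#\bar\mu_{\bar\eta}=\bar\mu_\eta$ and $\sup_y |y-\mathcal T_\eta(y)|$ arbitrarily close to $d_\infty(\bar\mu_\eta,\bar\mu_{\bar\eta})$, so that after change of variables
\begin{equation*}
v(x,\eta)-v(x,\bar\eta)=\int_{\real^N}[g(x-\mathcal T_\eta(y))-g(x-y)]\,d\bar\mu_{\bar\eta}(y).
\end{equation*}
Next I would introduce the cutoff $\chi_\epsilon$ from the proof of Lemma~\ref{reg1} and split $g=g_1^\epsilon+g_2^\epsilon$ with $g_1^\epsilon=g\,\chi_\epsilon(|\cdot|)$ isolating the singularity. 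The two pieces are treated by different mechanisms, following the same two-scale strategy used in Lemma~\ref{reg1}.

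For the singular piece, I estimate the two integrals $\int g_1^\epsilon(x-\mathcal T_\eta(y))\,d\bar\mu_{\bar\eta}(y)=\int g_1^\epsilon(x-z)\,d\bar\mu_\eta(z)$ and $\int g_1^\epsilon(x-y)\,d\bar\mu_{\bar\eta}(y)$ separately by slicing in $r=|\cdot-x|$. The key is that Remark~\ref{sphereremark} provides the bound $|\partial B(0,\eta)\cap \partial B(x,r)|_{\mathcal H^{N-2}}\leq C\,r^{N-2}$ with $C$ uniform for $\eta$ in a compact interval around $\bar\eta$; combined with the uniform bound on $\phi_\eta$, both terms are dominated by $C'\int_0^\epsilon |\hat g(r)|r^{N-2}\,dr$, which tends to $0$ as $\epsilon\to 0$ by the hypothesis that $g$ is locally integrable on hypersurfaces. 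For the regular piece, $g_2^\epsilon\in C^1(\real^N)$ and since $x$ and the supports of $\bar\mu_\eta$ for $\eta$ near $\bar\eta$ all stay in a bounded region, $g_2^\epsilon$ is Lipschitz on the relevant set with some constant $L_\epsilon$, giving
\begin{equation*}
\left|\int [g_2^\epsilon(x-\mathcal T_\eta(y))-g_2^\epsilon(x-y)]\,d\bar\mu_{\bar\eta}(y)\right|\leq L_\epsilon\,d_\infty(\bar\mu_\eta,\bar\mu_{\bar\eta}),
\end{equation*}
which tends to $0$ as $\eta\to\bar\eta$ by hypothesis. A standard $\sigma/2$ argument (pick $\epsilon$ first to dominate the singular part, then $\eta$ close enough to $\bar\eta$ to dominate the regular part) yields $|v(x,\eta)-v(x,\bar\eta)|\to 0$ uniformly in $x$ near $\bar x$. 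The non-radial extension to $G$ is identical: $|G|\leq|g|$ provides the singular estimate, while $G\in C^1(\real^N\setminus\{0\})$ provides the local Lipschitz bound needed for the regular estimate.

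The main obstacle is handling the singularity of $g$ at the origin simultaneously with the variation of the underlying sphere. The resolution hinges on Remark~\ref{sphereremark}: without its uniform geometric control on the surface area of intersections $\partial B(0,\eta)\cap \partial B(x,r)$ across nearby radii, the singular-part bound would depend on $\eta$, and one could not decouple the choice of $\epsilon$ from the control of $d_\infty(\bar\mu_\eta,\bar\mu_{\bar\eta})$.
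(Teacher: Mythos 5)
Your proposal is correct and follows essentially the same route as the paper: the same cutoff splitting near the singularity, the same uniform-in-$\eta$ control of the singular piece via the intersection-area bound of Remark~\ref{sphereremark}, and the same Lipschitz-times-$d_\infty$ estimate for the regularized part (which the paper writes directly as $\| \nabla [g(1-\chi^\varepsilon)]\|_{L^\infty(\Delta)}\, d_\infty(\bar\mu_{\eta},\bar\mu_{\tilde \eta})$, exactly your transport-map computation), together with Lemma~\ref{reg1} for the $x$-variable. The only difference is organizational: you make the $\eta$-increment uniform in $x$ and use plain continuity in $x$ at $\bar\eta$, whereas the paper makes the $x$-continuity uniform in $\eta$ and treats the $\eta$-increment at fixed $x$; both orderings yield joint continuity.
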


\begin{proof}
Lemma \ref{reg1} implies directly the continuity with respect to
$x$ for all fixed $\eta$. Using the Remark \ref{sphereremark} and
the proof of Lemma \ref{reg1}, it can be easily checked that this
continuity in $x$ is uniform in $\eta$. Indeed  $|v_1^\epsilon|$
can be made small uniformly in $\eta$ and, due to the estimate $
|\nabla v_2^\epsilon(x)| \le \sup_{\partial B(x,\eta)} \left|
\nabla [g (1-\chi_\epsilon)] \right| $, $v_2^\epsilon$ is
continuous uniformly in $\eta$. Therefore, we only need to show
the continuity in $\eta$ of $v$ for a fixed $x \in \real^N$.

As in the proof of Lemma \ref{reg1}, let $r_0$ be as obtained in
Remark \ref{sphereremark} uniform in $0<\eta_1<\eta<\eta_2$.
We choose again $0<\varepsilon<r_0$ and
$\chi_\epsilon\in C^\infty(\mathbb R_+)$ a cut-off function, such that
$\chi_\varepsilon=1$ on $[0,\varepsilon/2]$, and
$\chi_\varepsilon=0$ on $[\varepsilon,\infty)$. We can write
$\upsilon(x, \eta) = \upsilon_1^\varepsilon(x,
\eta)+\upsilon_2^\varepsilon(x, \eta)$ analogously to
\eqref{cutoff}. As in Lemma \ref{reg1} using the properties of $g$
and the uniformity in Remark \eqref{sphereremark}, we can easily
show that
$$
\lim_{\varepsilon\to 0}
\|\upsilon_1^\varepsilon(\cdot,\eta)\|_{L^\infty(\real^N)} = 0 \,
.
$$
uniformly in $0<\eta_1<\eta<\eta_2$. Therefore, for any
$\delta>0$, there exists $r_0>\varepsilon>0$ such that
$\|\upsilon_1^\varepsilon(\cdot,\eta)\|_{L^\infty(\real^N)}\leq
\frac \delta 4$ uniformly in $0<\eta_1<\eta<\eta_2$. Now, we
estimate
\begin{align*}
|v(x, \eta)-v(x,\tilde \eta)| \leq &\, \|\upsilon_1^\varepsilon(\cdot,\eta)\|_{L^\infty(\real^N)} + \|\upsilon_1^\varepsilon(\cdot,\tilde \eta)\|_{L^\infty(\real^N)} \\
&\, +\left|\int_{\real^N} g(x-y)[1-\chi^\varepsilon(|x-y|)]\,
d(\bar\mu_{\eta}-\bar\mu_{\tilde \eta})(y)\right| \\
\leq &\,  \frac{\delta}2 + \| \nabla
[g(1-\chi^\varepsilon)]\|_{L^\infty(\Delta)}\,
d_\infty(\bar\mu_{\eta},\bar\mu_{\tilde \eta}),
\end{align*}
where $\Delta$ is the convex hull of the set $\{x\}-(\textrm{supp
}\bar \mu_\eta)\cup(\textrm{supp }\bar\mu_{\tilde\eta})$. Notice
that the set $\Delta$ is uniformly bounded in $\eta$ and
$\tilde\eta$.

This estimate shows the continuity in $\eta$ since
$d_\infty(\bar\mu_{\eta},\bar\mu_{\tilde \eta})\to 0$ as $\eta\to
\tilde \eta$, and thus, the last term is bounded by $\delta/2$
provided that $\eta$ is close enough to $\tilde \eta$. Again, the
final part of this Lemma is a small variation of the previous
arguments.
\end{proof}

Finally, we complete the results by showing that if the function
is not locally integrable on hypersurfaces then the velocity field is not
bounded.

\begin{lemma}\label{notreg}
Let $\mathcal M_\eta:=\partial B(0,\eta)$ and $\bar \mu_\eta
=\phi_\eta \delta_{\mathcal M_\eta}$ be probability measures such
that $\phi_\eta(x)\geq \phi_0>0$ for all $\eta_1 \le \eta \le
\eta_2$. Let $g\in C(\real^N\backslash\{0\})$ be a nonnegative
radially symmetric function which is not locally integrable on
hypersurfaces. Then For all $M>0$ there exists $\delta>0$ such
that $$\dist(x,\mathcal M_\eta)<\delta \Longrightarrow
\int_{\real^N} g(x-y) \, d\bar \mu_\eta(y) \ge M \qquad \text{ for
all  $x\in \real^N$ and for all $\eta_1 \le \eta \le \eta_2$.}
$$
\end{lemma}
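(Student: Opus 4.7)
The plan is to reduce the integral $\int_{\real^N} g(x-y)\,d\bar\mu_\eta(y)$ to the divergent one-dimensional integral $\int_0^{r_0}\hat g(r)\,r^{N-2}\,dr$ by combining the coarea formula on $\M_\eta$ with the lower bound on $|\M_\eta\cap\partial B(x,r)|_{{\mathcal H}^{N-2}}$ supplied by Lemma~\ref{propmanifolds}. The non-integrability on hypersurfaces of $g$ will then force the left-hand side to blow up as $d:=\dist(x,\M_\eta)\to 0^+$, uniformly in $\eta\in[\eta_1,\eta_2]$.

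First I would use the lower bound $\phi_\eta\ge \phi_0$ to reduce to estimating $I(x,\eta):=\int_{\M_\eta} g(x-y)\,d\sigma(y)$. Next, consider the function $\Phi(y)=|x-y|$ on $\M_\eta$: as the restriction of a $1$-Lipschitz function on $\real^N$, its tangential gradient satisfies $|\nabla_{\M_\eta}\Phi|\le 1$. Since $g\ge 0$, the coarea formula on the $(N-1)$-dimensional manifold $\M_\eta$ yields
\begin{equation*}
I(x,\eta)\ge \int_{\M_\eta} g(x-y)\,|\nabla_{\M_\eta}\Phi(y)|\,d\sigma(y) = \int_0^\infty \hat g(r)\,|\M_\eta\cap\partial B(x,r)|_{{\mathcal H}^{N-2}}\,dr.
\end{equation*}
By Lemma~\ref{propmanifolds}, whose constants $r_0,\tilde C>0$ may be chosen uniform in $\eta\in[\eta_1,\eta_2]$ thanks to Remark~\ref{sphereremark} (the second fundamental form of $\partial B(0,\eta)$ has norm $1/\eta$, bounded on compact intervals of radii), this is at least $\tilde C\int_d^{r_0}\hat g(r)(r^2-d^2)^{(N-2)/2}\,dr$ whenever $d\le r_0$.

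To finish, observe that for $r\ge 2d$ one has $r^2-d^2\ge \tfrac34 r^2$ and therefore $(r^2-d^2)^{(N-2)/2}\ge (3/4)^{(N-2)/2}\,r^{N-2}$ for all $N\ge 2$. Hence
\begin{equation*}
\int_{\real^N} g(x-y)\,d\bar\mu_\eta(y)\ge \phi_0\,\tilde C\,(3/4)^{(N-2)/2}\int_{2d}^{r_0}\hat g(r)\,r^{N-2}\,dr,
\end{equation*}
and since $\hat g(r)r^{N-2}$ is nonnegative and not locally integrable at $0$, monotone convergence yields $\int_{2d}^{r_0}\hat g(r)r^{N-2}\,dr\to+\infty$ as $d\to 0^+$. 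For any prescribed $M>0$, choosing $\delta\in(0,r_0/2)$ small enough that the right-hand side above exceeds $M$ gives the claim.

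The main obstacle is to ensure that all constants in the chain of inequalities are independent of $\eta\in[\eta_1,\eta_2]$, so that a single $\delta$ works for the entire range. This is resolved cleanly by Remark~\ref{sphereremark}: spheres of radii in a compact interval share a uniform bound on their second fundamental forms, hence the constants $r_0$ and $\tilde C$ from Lemma~\ref{propmanifolds} (which depend only on this bound) can indeed be chosen independently of $\eta$.
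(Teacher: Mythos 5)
Your proof is correct and follows essentially the same route as the paper's: reduce to the sliced integral $\int_0^{r_0}\hat g(r)\,|\mathcal M_\eta\cap\partial B(x,r)|_{\mathcal H^{N-2}}\,dr$, lower-bound the slice areas by $\tilde C\,(r^2-\dist(x,\mathcal M_\eta)^2)_+^{(N-2)/2}$ via Lemma~\ref{propmanifolds} with constants made uniform in $\eta\in[\eta_1,\eta_2]$ through Remark~\ref{sphereremark}, and let the non-integrability of $\hat g(r)r^{N-2}$ force blow-up as $\dist(x,\mathcal M_\eta)\to 0$. The only cosmetic difference is that you conclude by the explicit comparison $(r^2-d^2)^{(N-2)/2}\ge (3/4)^{(N-2)/2}r^{N-2}$ on $r\ge 2d$ (valid for $N\ge 2$, the relevant case), whereas the paper applies monotone convergence directly to $(r^2-d^2)_+^{(N-2)/2}\nearrow r^{N-2}$.
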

\begin{proof}
Using Lemma \ref{propmanifolds} and Remark \ref{sphereremark}, for
$x\in \real^N$ with $\dist(x,\mathcal M_\eta)<r_0$, we get
\begin{align*}
\int_{\real^N}  g(x-y) \, d\bar\mu_\eta(y) &\geq \int_{|x-y|<r_0}
g(x-y)  \, d\bar\mu_\eta(y) \geq \phi_0\int_0^{r_0}
\hat{g}(r)\,|\{y\in \mathcal M_\eta; \,|y-x|=r \}|_{\mathcal
H^{N-2}} \, dr \\ & \geq \phi_0\tilde C\int_0^{r_0} \hat{g}(r) \,
(r{^2}-\dist(x,\mathcal M_\eta)^{2})_+^{\frac{N-2}{2}} \, dr\, .
\end{align*}
Since $\int_0^1 \hat{g}(r)r^{N-2}\,dr=+ \infty$ and $g$ is
continuous and nonnegative on $(0,1]$, we deduce that
$$
\lim_{\dist(x,\mathcal M_\eta) \to 0} \int_0^{r_0}
\hat{g}(r) \, (r{^2}-\dist(x,\mathcal
M_\eta)^{2})_+^{\frac{N-2}{2}} \, dr = +\infty\, ,
$$
by the monotone convergence theorem, which conclude the proof.
\end{proof}

\subsection*{Acknowledgments}
DB and JAC were supported by the projects  Ministerio de Ciencia e
Innovaci\'on MTM2011-27739-C04-02 and 2009-SGR-345 from Ag\`encia
de Gesti\'o d'Ajuts Universitaris i de Recerca-Generalitat de
Catalunya. GR was supported by Award No. KUK-I1- 007-43 of Peter
A. Markowich, made by King Abdullah University of Science and
Technology (KAUST). DB, JAC and GR acknowledge partial support
from CBDif-Fr ANR-08-BLAN-0333-01 project. TL acknowledges the
support from NSF Grant DMS-1109805. The authors warmly thank
Joaqu{\'\i}n P\'erez in helping them with the differential
geometry question related to Lemma \ref{propmanifolds}.

\bibliographystyle{plain}
\bibliography{refs}

\end{document}